\theoremstyle{plain}
\newtheorem{theorem}{Theorem}
\newtheorem{proposition}{Proposition}[subsection]
\newtheorem{lemma}[proposition]{Lemma}
\theoremstyle{remark}
\theoremstyle{definition}
\newcommand{\omicron}{o}
\let\f\varphi
\def\lra{\longrightarrow}
\def\egal{\ar@{=}}
\def\surj{\ar@{->>}}
\def\E{\mathcal E}
\def\F{\mathcal F}
\def\G{\mathcal G}
\def\L{\mathcal L}
\def\O{\mathcal O}
\def\X{\mathcal X}
\def\Y{\mathcal Y}
\def\Z{\mathcal Z}
\def\Coker{{\mathcal Coker}}
\def\AA{\mathbb A}
\def\CC{\mathbb C}
\def\NN{\mathbb N}
\def\PP{\mathbb P}
\def\QQ{\mathbb Q}
\def\WW{\mathbb W}
\def\ZZ{\mathbb Z}
\def\D{{\scriptscriptstyle \operatorname{D}}}
\def\TR{{\scriptscriptstyle \operatorname{T}}}
\def\vir{{\scriptstyle \operatorname{vir}}}
\def\ss{{\scriptstyle \operatorname{ss}}}
\def\A{\operatorname{A}}
\def\H{\operatorname{H}}
\def\M{\operatorname{M}}
\def\N{\operatorname{N}}
\def\Sym{\operatorname{S}}
\def\s{\operatorname{s}}
\def\T{\operatorname{T}}
\def\Hom{\operatorname{Hom}}
\def\Aut{\operatorname{Aut}}
\def\End{\operatorname{End}}
\def\Ext{\operatorname{Ext}}
\def\GL{\operatorname{GL}}
\def\rank{\operatorname{rank}}
\def\spann{\operatorname{span}}
\def\Stab{\operatorname{Stab}}
\def\Hilb{\operatorname{Hilb}}
\def\type{\operatorname{type}}
\def\red{\operatorname{red}}
\def\tensor{\otimes}
\def\isom{\simeq}
\def\ba{\begin{array}}
\def\ea{\end{array}}
\begin{document}

\title[The homology groups of certain moduli spaces of plane sheaves]
{The homology groups of certain moduli spaces of plane sheaves}

\author{Mario Maican}
\address{Institute of Mathematics of the Romanian Academy,
Calea Grivitei 21, Bucharest 010702, Romania}
\email{mario.maican@imar.ro}

\keywords{Moduli of plane sheaves; Bia{\l}ynicki-Birula decomposition; Torus actions; Hodge numbers.}
\subjclass[2010]{14D20, 14-04.}

\begin{abstract}
Using the Bia{\l}ynicki-Birula method, we determine the additive structure of the integral homology
groups of the moduli spaces of semi-stable sheaves on the projective plane
having rank and Chern classes $(5, 1, 4)$, $(7, 2, 6)$, respectively, $(0, 5, 19)$.
We compute the Hodge numbers of these moduli spaces.
\end{abstract}

\maketitle

\section{Introduction} 
\label{section_1}

Let $r > 0$, $c_1$, $c_2$ be integers.
Let $\M(r, c_1, c_2)$ be the moduli space of Gieseker semi-stable
sheaves on $\PP^2 = \PP^2(\CC)$ of rank $r$ and Chern classes $c_1$, $c_2$.
Let
\[
\Delta(r, c_1, c_2) = \frac{1}{r} \bigg( c_2 - \bigg( 1 - \frac{1}{r} \bigg) \frac{c_1^2}{2} \bigg)
\quad \text{and} \quad \mu = \frac{c_1}{r}
\]
be the discriminant and slope of a sheaf giving a point in $\M(r, c_1, c_2)$.
The moduli spaces $\M(r, c_1, c_2)$ of dimension zero consist of a point,
the isomorphism class of a semi-exceptional bundle, (see \cite[Section 16.1]{lepotier}).
According to \cite{drezet_reine}, there exists a unique function $\delta \colon \QQ \to \QQ$
such that for all $r$, $c_1$, $c_2$, $\M(r, c_1, c_2)$ has positive dimension
if and only if $\Delta(r, c_1, c_2) \ge \delta (\mu)$.
(The function $\delta$ is positive and periodic of period $1$, see \cite[Section 16.4]{lepotier}.)
In \cite{drezet_reine}, $\M(r, c_1, c_2)$ is said to have \emph{height zero} if $\Delta = \delta(\mu)$.
It was proved in \cite{drezet_reine} that the moduli spaces of height zero are isomorphic to moduli spaces
of semi-stable Kronecker modules.

Let $q$, $m$, $n$ be positive integers.
The group $\GL(m,\CC) \times \GL(n,\CC)$ acts by conjugation on the vector space
$\Hom(\CC^m \tensor \CC^q, \CC^n)$, whose elements are called \emph{Kronecker modules}.
The subset of semi-stable Kronecker modules admits a good quotient, denoted $\N(q,m,n)$,
which, according to \cite{drezet_reine}, is an irreducible projective variety
of dimension $qmn-m^2-n^2+1$, if non-empty. It is smooth at points given by stable
Kronecker modules so, in particular, $\N(q, m, n)$ is smooth if $m$ and $n$ are coprime.
The main result of \cite{drezet_reine} states that if $\M(r, c_1, c_2)$ has height zero,
then there are an exceptional bundle $E$ and integers $m$, $n$ such that we have
a canonical isomorphism
\[
\M(r, c_1, c_2) \isom \N(3 \rank(E), m, n).
\]
Let $x_E$ denote the smallest real solution to the equation $x^2 - 3 \rank(E) x + 1 = 0$.
Let $\mu(E)$ denote the slope of $E$. Then
\[
\mu(E) - \frac{x_E}{\rank(E)} < \mu < \mu(E) + \frac{x_E}{\rank(E)}.
\]
Composing with the isomorphism $\N(q, m, n) \isom \N(q, n, qn-m)$ of \cite{drezet_reine},
we obtain a new moduli space of height zero, denoted $\Lambda^{+} \M(r, c_1, c_2)$,
whose slope is smaller than $\mu$ if $\mu < \mu(E)$, respectively, larger than $\mu$ if $\mu > \mu(E)$.
Iterating this process we obtain an infinite sequence of isomorphic moduli spaces.
We say that $\M(r, c_1, c_2)$ is \emph{initial} if it is not of the form $\Lambda^{+} \M(r, c_1', c_2')$
for any integers $c_1'$, $c_2'$.
The classification of the moduli spaces of height zero and dimension up to $10$ was
carried out in \cite{drezet_reine}.
We will see in Section \ref{3.1} that there are no moduli spaces of height zero and dimension $11$.

The first goal of this paper is to determine the additive structure of the homology groups
of the moduli spaces of height zero and dimension $12$.
In Section \ref{3.1} we will show that they are isomorphic to $\N(3, 4, 3)$, which is a smooth projective variety.
We will use the method of Bia{\l}ynicki-Birula \cite{birula}, \cite{birula_polonici},
which consists of analysing the fixed-point locus for the action of a torus on a smooth projective variety.
We refer to \cite[Section 2]{choi_maican} for a short introduction to the Bia{\l}ynicki-Birula theory.
We fix a vector space $V$ over $\CC$ of dimension $3$ and we identify $\PP^2$ with $\PP(V)$.
We fix a basis $\{ X, Y, Z \}$ of $V^*$.
We consider the action of $(\CC^*)^3$ on $\PP^2$ given by
\[
t (x_0^{} : x_1^{} : x_2^{}) =  (t_0^{-1} x_0^{} : t_1^{-1} x_1^{} : t_2^{-1} x_2^{}),
\]
where $t = (t_0, t_1, t_2)$. The induced action on the symmetric algebra of $V^*$ is given by the formula
\[
t  \, X^iY^jZ^k = t_0^i t_1^j t_2^k X^iY^jZ^k.
\]
In particular, we get an action of $(\CC^*)^3$ on $\Hom(\CC^4, \CC^3 \tensor V^*)$ by multiplication on $V^*$.
This descends to an action on $\N(3, 4, 3)$.
Consider the torus
\[
T = (\CC^*)^3/ \{ (c, c, c) \mid \ c \in \CC^* \}.
\]
Note that the action of $(\CC^*)^3$ on $\PP^2$ and, also, on $\N(3, 4, 3)$ factors through an action of $T$.

\begin{theorem}
\label{theorem_1}
The initial moduli spaces of height zero and dimension twelve are
$\M(5, -1, 4)$, $\M(7, -2, 6)$, $\M(5, 1, 4)$, $\M(7, 2, 6)$, as well as the moduli spaces obtained from these
by twisting with $\O(m)$, $m \in \ZZ$.
These moduli spaces are isomorphic to $\N(3, 4, 3)$. The $T$-fixed locus of $\N(3, 4, 3)$ consists of
$62$ isolated points and $3$ projective lines. Moreover, the integral homology groups of $\N(3, 4, 3)$ have no torsion
and its Poincar\'e polynomial is
\[
P(x) = x^{24} +  x^{22} + 3 x^{20} + 5 x^{18} + 8 x^{16} + 10 x^{14} + 12 x^{12} + 10 x^{10} + 8 x^8 + 5 x^6 + 3 x^4 + x^2 + 1.
\]
The Euler characteristic of $\N(3, 4, 3)$ is $68$ and its Hodge numbers satisfy the relation
$h^{pq} = 0$ if $p \neq q$.
\end{theorem}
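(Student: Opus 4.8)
The proof splits into a combinatorial part, the classification, and a topological part, the Bia{\l}ynicki--Birula analysis of the smooth projective variety $\N(3,4,3)$. For the classification I would start from Dr\'ezet's results: a non-empty moduli space $\N(3,m,n)$ has dimension $3mn-m^2-n^2+1$, and the equation $3mn-m^2-n^2+1=12$, together with the symmetry $\N(3,m,n)\isom\N(3,n,m)$ and the reflection $\N(3,m,n)\isom\N(3,n,3n-m)$, leaves a single isomorphism class, represented by $\N(3,4,3)$; the same reasoning shows that dimension $11$ does not occur, as will be recorded in the next section. Passing through the dictionary of \cite{drezet_reine} between an exceptional bundle $E$, the integers $m,n$ and the invariants $(r,c_1,c_2)$, and keeping only those moduli spaces that are not of the form $\Lambda^{+}\M(r,c_1',c_2')$, one is left with $\M(5,-1,4)$, $\M(7,-2,6)$, $\M(5,1,4)$, $\M(7,2,6)$ and their twists by $\O(m)$. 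Since $\gcd(4,3)=1$, $\N(3,4,3)$ is smooth and projective of dimension $12$, so the Bia{\l}ynicki--Birula theory applies without restriction.

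Next I would describe the $T$-fixed locus. Realise $\N(3,4,3)$ as the good quotient of the semi-stable locus in $\Hom(\CC^4,\CC^3\tensor V^*)$, the space of $3\times 4$ matrices of linear forms in $X,Y,Z$, modulo $\GL(4,\CC)\times\GL(3,\CC)$. A point $[M]$ is $T$-fixed precisely when the $T$-orbit of $M$ lies inside its $\GL(4,\CC)\times\GL(3,\CC)$-orbit, equivalently when $T$ lifts to $\GL(4,\CC)\times\GL(3,\CC)$ so as to make $M$ a common eigenvector; choosing bases of $\CC^4$ and $\CC^3$ adapted to the resulting weight decompositions, $M$ takes a normal form whose nonzero entries are monomials of prescribed weights. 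I would list these normal forms, discard the non-semi-stable ones, and identify those related by $\GL(4,\CC)\times\GL(3,\CC)$; the expected outcome is $62$ isolated fixed points and $3$ copies of $\PP^1$ on which $T$ acts trivially. For a generic one-parameter subgroup $\lambda\colon\CC^*\to T$ the $\lambda$-fixed locus coincides with the $T$-fixed locus, so the Bia{\l}ynicki--Birula decomposition attached to $\lambda$ has $65$ strata.

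To assemble the homology I would compute, at each fixed component, the weights of $\lambda$ on the Zariski tangent space of $\N(3,4,3)$; at the point given by a stable Kronecker module $M$, this tangent space is the cokernel of the $T$-equivariant homomorphism $(\alpha,\beta)\mapsto\beta M-M\alpha$ on $\Hom(\CC^4,\CC^3\tensor V^*)$. By the recipe recalled in \cite[Section 2]{choi_maican}, each isolated fixed point $p$ contributes $x^{2d_p}$, where $d_p$ is the number of positive weights, and each of the three lines contributes $x^{2d_j}(1+x^2)$, where $d_j$ is the number of positive weights on the normal bundle; their sum should reproduce $P(x)$. Since every stratum is an affine bundle over a point or over $\PP^1$, the homology of $\N(3,4,3)$ is torsion-free and concentrated in even degrees, so the Euler characteristic is $P(1)=68$; and as $\N(3,4,3)$ then carries only Tate cohomology classes, $H^{2p}$ is pure of type $(p,p)$, which gives $h^{pq}=0$ for $p\neq q$ and $h^{pp}=b_{2p}$. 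I expect the genuine obstacle to lie in the enumeration of the $T$-fixed Kronecker modules — in particular in verifying that the positive-dimensional part of the fixed locus consists of exactly three lines — together with the bookkeeping of tangent weights at all $65$ fixed components; both tasks are mechanical but long, and the real risk is overlooking a fixed component or miscounting a weight.
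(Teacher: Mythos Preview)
Your overall strategy matches the paper's: solve a Diophantine equation to pin down $\N(3,4,3)$, enumerate the $T$-fixed locus, compute tangent weights, and feed everything into the Bia\l ynicki--Birula homology basis formula. Two points deserve comment.

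First, a small gap in the classification: you solve $3mn-m^2-n^2+1=12$, but Dr\'ezet's isomorphism gives $\M(r,c_1,c_2)\isom\N(3\rank(E),m,n)$ for an exceptional bundle $E$ of a priori arbitrary rank, so the equation to solve is $3rmn-m^2-n^2=11$ over all $r\ge 1$. The paper checks this and finds $r=1$ is forced (and, analogously, that dimension $11$ would require a rank-$4$ exceptional bundle, which does not exist on $\PP^2$). The initial moduli spaces are then found by a second Diophantine equation $c_1^2+3rc_1+r^2=11$ in $(r,c_1)$, not directly from the Kronecker dictionary.

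Second, for the fixed-point enumeration the paper does not proceed by listing weight-compatible monomial matrices as you propose, but instead stratifies $\N(3,4,3)=\N_0\cup\N_1\cup\N_2$ according to the degree of $\gcd(\zeta_1,\zeta_2,\zeta_3,\zeta_4)$ of the maximal minors. On the open stratum $\N_0$ there is a $T$-equivariant isomorphism with the locus in $\Hilb_{\PP^2}(6)$ of schemes not lying on a conic, so the fixed points there are exactly the $T$-invariant length-$6$ subschemes of this type, which are easy to list. The three projective lines of fixed points arise in the analysis of $\N_1$ (with closure meeting $\N_2$), where a one-parameter family $\lambda(b:c)$ of inequivalent semi-stable matrices appears. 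Your brute-force approach would work in principle, but this geometric stratification is what makes the enumeration tractable and, in particular, what cleanly isolates the positive-dimensional fixed components you correctly identify as the main risk.
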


\noindent
Dr\'ezet \cite{drezet_cohomologie} and Ellingsrud and Str{\o}mme \cite{ellingsrud-stromme}
have given algorithms for computing the homology of $\N(q, m, n)$ based on different methods.
Moreover, according to \cite{drezet_cohomologie}, the cohomology ring of $\N(q, m, n)$
is generated by the Chern clases of certain universal vector bundles, if $m$ and $n$ are coprime.
This implies the above statement about the Hodge numbers of $\N(3, 4, 3)$

Let $r > 0$ and $\chi$ be integers. Let $\M_{\PP^2}(r, \chi)$ be the moduli space of Gieseker
semi-stable sheaves on $\PP^2$ having Hilbert polynomial $P(m) = rm + \chi$
(this the same as $\M(0, r, r(r+3)/2 - \chi)$).
The aim of the second part of this paper is to compute the Hodge numbers of $\M_{\PP^2}(5,1)$.
According to \cite{lepotier_revue}, this is a smooth projective variety of dimension $26$.

The study of the moduli spaces $\M_{\PP^2}(r, 1)$ is partly motivated by Gromov-Witten Theory.
Let $X$ be a polarised Calabi-Yau threefold and fix $\beta \in \H_2(X, \ZZ)$.
Let $\M_X(\beta)$ be the moduli space of semi-stable sheaves $\F$ on $X$ having Euler characteristic $1$
and whose support has dimension $1$ and class $\beta$.
Consider the genus-zero Gromov-Witten invariant $N_{\beta}(X)$ of $X$
and the Donaldson-Thomas invariant $n_{\beta}(X) = \deg[\M_X(\beta)]^\vir$.
Katz \cite{katz_curves} conjectured the relation
\[
N_{\beta}(X) = \sum_{k | \beta} n_{\beta/k}(X) k^{-3},
\]
for which he found evidence by looking at contractible curves.
Li and Wu \cite{li_wu} have proved the conjecture in some particular cases.
Assume now that $X$ is the local $\PP^2$, that is, the total space of $\omega_{\PP^2}$.
Then, as noted in \cite{choi_thesis}, $\M_X(r) = \M_{\PP^2}(r, 1)$, hence, up to sign,
$n_r(X)$ is equal to the Euler characteristic of $\M_{\PP^2}(r, 1)$.
We refer to \cite{choi_maican} for a brief survey on the present state of research into these moduli spaces.

The Poincar\'e polynomial of $\M_{\PP^2}(5, 1)$ has already been computed in \cite{yuan}
by means of a cellular decomposition and in \cite{choi_chung} by the wall-crossing method.
We will, instead, apply the Bia{\l}ynicki-Birula method to the action on $\M_{\PP^2}(5, 1)$
induced by the action of $T$ on $\PP^2$.
Concretely, let $\mu_t \colon \PP^2 \to \PP^2$ denote the map of multiplication by $t \in T$
and let $[\F]$ denote the point in $\M_{\PP^2}(5,1)$ determined by a sheaf $\F$ on $\PP^2$.
The action of $T$ on the moduli space is given by
\[
t \, [\F] = [\mu_{t^{-1}}^* \F].
\]
To determine the torus fixed locus we will use the classification of semi-stable sheaves $\F$ on $\PP^2$
with Hilbert polynomial $P(m) = 5m+1$ provided at \cite{illinois}.
The same technique was used in \cite{homology} to study the homology of $\M_{\PP^2}(5, 3)$
and in \cite{choi_maican} to study $\M_{\PP^2}(4, 1)$.

\begin{theorem}
\label{theorem_2}
The $T$-fixed locus of $\M_{\PP^2}(5, 1)$ consists of $1407$ isolated points, $132$ projective lines
and $6$ irreducible components of dimension $2$ that are isomorphic to $\PP^1 \times \PP^1$.
The integral homology groups of $\M_{\PP^2}(5, 1)$ have no torsion and its Poincar\'e polynomial is
\begin{align*}
P(x) = & x^{52} + 2x^{50}+6x^{48}+13x^{46}+26x^{44}+45x^{42}+68x^{40}+87x^{38}+100x^{36} \\
& +107x^{34}+111x^{32}+112x^{30}+113x^{28}+113x^{26}+113x^{24}+112x^{22}+111x^{20} \\
& +107x^{18}+100x^{16}+87x^{14}+68x^{12}+45x^{10}+26x^8+13x^6+6x^4+2x^2+1.
\end{align*}
The Euler characteristic of $\M_{\PP^2}(5,1)$ is $1695$ and its Hodge numbers satisfy the relation
$h^{pq} = 0$ if $p \neq q$.
\end{theorem}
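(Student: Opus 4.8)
The plan is to run the Bia{\l}ynicki-Birula machine on the smooth projective variety $\M_{\PP^2}(5,1)$ equipped with the $T$-action introduced above, following the scheme already used in \cite{homology} for $\M_{\PP^2}(5,3)$ and in \cite{choi_maican} for $\M_{\PP^2}(4,1)$. First I would fix a one-parameter subgroup $\lambda \colon \CC^* \to T$ that is \emph{generic}, in the sense that its fixed locus equals the $T$-fixed locus and that no normal weight of a fixed component degenerates to zero under $\lambda$. The BB decomposition then writes $\M_{\PP^2}(5,1)$ as a disjoint union of locally closed pieces $\M_Z^+$, one for each connected component $Z$ of the fixed locus, where $\M_Z^+$ is a Zariski-locally trivial affine bundle over $Z$ of rank $d_Z^+$, and $d_Z^+$ is the number of positive $\lambda$-weights on the tangent space $\T_{[\F]}\M_{\PP^2}(5,1)$ at a point $[\F] \in Z$. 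Since the BB decomposition of a smooth projective variety is filterable, and since each $Z$ will turn out to be a point, a projective line, or a copy of $\PP^1 \times \PP^1$ — each of which has a cell decomposition by affine spaces, over which the affine bundle $\M_Z^+$ is trivial — the whole decomposition refines to a cell decomposition of $\M_{\PP^2}(5,1)$ by affine spaces. This at once gives that $\H_*(\M_{\PP^2}(5,1), \ZZ)$ is torsion-free, supported in even degrees, and has Poincar\'e polynomial $P(x) = \sum_Z x^{2 d_Z^+}\, P_Z(x)$.

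The first main task is to pin down the fixed locus. A sheaf $\F$ defines a $T$-fixed point exactly when $\mu_{t^{-1}}^* \F \isom \F$ for all $t \in T$, so I would go through the classification of semi-stable sheaves with Hilbert polynomial $5m+1$ recorded at \cite{illinois} and, in each stratum, write $\F$ via its minimal locally free resolution (a short complex of twisted structure sheaves, or a Beilinson monad) and impose $T$-equivariance. Invariance forces the matrices in the resolution to be built from monomials in $X, Y, Z$, up to the action of the automorphism group of the resolution; a substantial combinatorial analysis then produces the finite list of isolated fixed points together with the positive-dimensional families, which I expect to be $132$ lines and $6$ surfaces $\PP^1 \times \PP^1$, with the isolated fixed points numbering $1407$. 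The delicate points are to cover every stratum, to overlook no fixed point, and to keep the isolated fixed points genuinely separate from the positive-dimensional components; this bookkeeping, carried out with computer assistance, is the bulk of the work.

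The second task is, for each component $Z$, to compute $\T_{[\F]}\M_{\PP^2}(5,1)$ as a $T$-representation. Because $\M_{\PP^2}(5,1)$ is smooth of dimension $26$ and $\F$ is stable, $\Hom(\F,\F) = \CC$ and, by Serre duality, $\Ext^2(\F,\F) \isom \Hom(\F, \F(-3))^* = 0$ (two stable sheaves with $p(\F) > p(\F(-3))$ admit no non-zero morphism), so $\T_{[\F]}\M_{\PP^2}(5,1) = \Ext^1(\F,\F)$ is $26$-dimensional. Applying $\Hom(-, \F)$ and $\Hom(\F,-)$ to the $T$-equivariant resolution of $\F$ and using that $\Ext^\bullet(\O(a), \O(b)) \isom \H^\bullet(\PP^2, \O(b-a))$ carries an explicit $T$-action, I can compute $\Ext^1(\F,\F)$ with its weights through the associated spectral sequence, and then split it into positive, zero, and negative weight parts for $\lambda$. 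The zero part is $\T_{[\F]} Z$ (dimension $0$, $1$, or $2$) and the positive part has dimension $d_Z^+$. Summing $x^{2 d_Z^+} P_Z(x)$ over all $Z$ gives $P(x)$, which I would then check for palindromicity (Poincar\'e duality), evaluate at $x = 1$ to read off the Euler characteristic $1695$, and compare with the Poincar\'e polynomial already obtained in \cite{yuan} and \cite{choi_chung}.

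Finally, the Hodge-number assertion is automatic once the cell decomposition is in place: a smooth projective variety paved by affine spaces has $\H^{2k+1} = 0$ and $\H^{2k}$ freely generated by algebraic cycle classes, so its cohomology is of pure Tate type and $h^{pq} = 0$ for $p \neq q$. The genuine obstacle is not conceptual but the size of the case analysis: the real effort goes into extracting the fixed locus from the classification in \cite{illinois} and computing all the $\Ext^1$-weights, and into organising these computations so that the numerical sanity checks above can confirm them.
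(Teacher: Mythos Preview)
Your proposal is correct and follows the same Bia{\l}ynicki-Birula strategy as the paper: classify the $T$-fixed sheaves stratum by stratum using the resolutions from \cite{illinois}, compute the tangent weights, and sum $x^{2d_Z^+}P_Z(x)$. Two implementation points differ from what the paper actually does. First, the paper does not compute the $T$-module $\T_{[\F]}\M$ via an $\Ext^1$ spectral sequence; instead it uses the description of each stratum $\M_k$ as a geometric quotient $W_k/G_k$, writes $\T_{[\F]}\M_k \isom \T_\f W_k / \T_\f(G_k\f)$ with the explicit weight formulas (\ref{3.3.1})--(\ref{3.3.2}), and then adds the normal weights to $\M_k$ inside $\M$ separately (Propositions \ref{5.2.1}, \ref{5.3.1}, \ref{5.4.1}). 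Second, for the Hodge numbers the paper invokes the Hodge-compatible form of the BB homology basis formula (equation (\ref{3.3.4}), due to Carrell) rather than an affine paving; this bypasses the need to argue that the affine bundles trivialise over the cells of each $Z$. One step you pass over is the identification of the six two-dimensional components as $\PP^1\times\PP^1$: in the paper this is Proposition \ref{5.5.2}, and it requires first determining, via the normal-weight computations, exactly which fixed points in $\M_1\cup\M_2\cup\M_3$ lie in the closure of the surfaces $\Sigma_0\subset\M_0^T$, and then a short vector-bundle argument (Lemma \ref{5.5.1}) to conclude that the resulting $\PP^1$-bundle over $\Lambda\isom\PP^1$ is trivial.
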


\noindent
The proof of Theorem \ref{theorem_2} relies to a large extent on the proof of Theorem \ref{theorem_1}.
Indeed, the moduli spaces $\M_{\PP^2}(5, 1)$ and $\N(3, 4, 3)$ are closely related.
According to \cite[Proposition 3.2.1]{illinois}, there is an open $T$-invariant subset $\M_0 \subset \M_{\PP^2}(5,1)$
that is isomorphic to an open subset inside a certain fibre bundle with base $\N(3, 4, 3)$ and fibre $\PP^{14}$.
The projection to the base $\M_0 \to \N(3,4,3)$ is $T$-equivariant,
surjective, and has surjective differential at every point.
Determining $\N(3, 4, 3)^T$ is, therefore, indispensable for determining $\M_0^T$. 
Moreover, as $T$-modules, the tangent space at a fixed point of $\N(3, 4, 3)$ is a direct summand
of the tangent space at a fixed point of $\M_0$ lying over it.

The paper is organised as follows. In Sections \ref{section_2} and \ref{section_4}
we will determine the torus fixed points in $\N(3, 4, 3)$, respectively, $\M_{\PP^2}(5, 1)$.
In Sections \ref{section_3} and \ref{section_5} we will describe the torus action on the tangent
spaces at the fixed points. We will omit the proofs of most propositions from Sections \ref{section_4}
and \ref{section_5} because they are analogous to results in \cite{choi_maican} and \cite{homology}.

In closing, we mention that the Betti numbers of $\M_{\PP^2}(5, 1)$ are the same as the Betti numbers of
$\M_{\PP^2}(5, 3)$, as computed in \cite{yuan} and \cite{homology}.
This raises the question whether these two moduli spaces are isomorphic.

\section*{Acknowledgements}

\noindent
The author was supported by
Consiliul Na\c{t}ional al Cercet\u{a}rii \c{S}tiin\c{t}ifice (Romania), grant PN II--RU 169/2010 PD--219.
The author has benefitted from discussions with Jinwon Choi.
The referee pointed out several omissions in the Introduction, numerous improvements
to the presentation, especially in Section \ref{section_2}, and a significant error in Section \ref{3.1}.


\section{The torus fixed locus of $\N(3, 4, 3)$}
\label{section_2}

Consider the vector space
\[
W = \Hom(4\O_{\PP^2}(-2), 3 \O_{\PP^2}(-1)) = \Hom(\CC^4 \tensor V, \ \CC^3).
\]
Its elements are represented by $3 \times 4$-matrices $\f$ with entries in $V^*$.
The reductive group
\[
G = (\GL(4,\CC) \times \GL(3,\CC))/\CC^*
\]
acts on $W$ by the formula $(g, h) \f = h \f g^{-1}$. Here $\CC^*$ is embedded as the
subgroup of homotheties. According to King's Criterion of Semi-stability \cite{king},
the set $W^\ss$ of semi-stable elements consists of those matrices that are not in the orbit
of a matrix having a zero-column, or a zero $1 \times 3$-submatrix, or a zero $2 \times 2$-submatrix.
We have a geometric quotient $W^\ss/G$, which is the Kronecker moduli space $\N(3, 4, 3)$.
The point $[\f]$ in the moduli space determined by $\f$ is $T$-fixed if and only if for each $t \in T$
there is $(g, h) \in G$ such that $t \f = (g, h) \f$. Let $\zeta_i$, $1 \le i \le 4$, denote the maximal minor of $\f$
obtained by deleting the column $i$.
Let $W_i \subset W^\ss$, $i = 0, 1, 2$, denote the subset given by the condition
\[
\deg(\gcd(\zeta_1, \zeta_2, \zeta_3, \zeta_4)) = i.
\]
Its image $\N_i$ in $\N(3, 4, 3)$ is torus-invariant.
Moreover, $\N_0$ is an open subset, $\N_1$ is a locally closed subset and $\N_2$ is a closed subset of $\N(3, 4, 3)$.
There are no semi-stable matrices $\f$ whose maximal minors have a common factor of degree $3$.
Indeed, if $\f$ were such a matrix, then we might assume, after we perform elementary column operations,
that $\zeta_1 = 0$, $\zeta_2 = 0$, $\zeta_3 = 0$, $\zeta_4 \neq 0$. But
\[
\f \, \left[
\ba{cccc}
\zeta_1 & - \zeta_2 & \zeta_3 & - \zeta_4
\ea
\right]^\TR = 0,
\]
showing that the last column of $\f$ is zero, which is contrary to semi-stability.
Also, the case when $\zeta_i$ are all zero is not feasible.
Indeed, assume that all maximal minors of $\f$ were zero.
Let $\f_i$ denote the matrix obtained from $\f$ by deleting column $i$.
By hypothesis, $\f_i$ is not equivalent to a matrix having a zero row, a zero column,
or a zero $2 \times 2$-submatrix. In other words, $\f_i$ is semi-stable as a Kronecker module.
From the description of $\N(3, 3, 3)$ found in \cite{lepotier_revue}, we know that
\[
\f_i \sim \left[
\ba{ccc}
Y & X & 0 \\
Z & 0 & X \\
0 & Z & -Y
\ea
\right].
\]
Thus, there is a row vector $l_i$ of length $3$ whose entries form a basis of $V^*$, such that
$l_i \f_i = 0$. Since $\f_i$ and $\f_j$ have two columns in common, it is easy to see that
$l_i = l_j$ for all $1 \le i < j \le 4$. It follows that $l_1 \f = 0$, hence $\f$ has linearly dependent columns,
contrary to semi-stability.
The above discussion shows that
\[
\N_0 \cup \N_1 \cup \N_2 = \N(3, 4, 3).
\]

\subsection*{Notations}

\noindent \\ \\
\begin{tabular}{r c l}
$\N \ $ & = & $\N(3, 4, 3)$; \\
$\Sym_3$ & = & the group of permutations $\sigma$ of the variables $(X,Y,Z)$; \\
$\A_3$ & = & the subgroup of $\Sym_3$ of even permutations; \\
$\f_{\sigma}$ & = & the matrix obtained from $\f$ after performing the permutation $\sigma$; \\
$U_{\f}$ & = & $\spann \{ \zeta_1, \zeta_2, \zeta_3, \zeta_4 \} \subset \Sym^3 V^*$; \\
$\type(\f)$ & = & $(\rank(\f \mod (Y,Z)), \rank(\f \mod (X,Z)), \rank(\f \mod (X,Y)))$; \\
$p_0$ & = & $(1 : 0 : 0)$; \\
$p_1$ & = & $(0 : 1 : 0)$; \\
$p_2$ & = & $(0 : 0 : 1)$; \\
$p_{ij}$ & = & the double point supported on $p_i$ and contained in the line $p_i p_j$; \\
$q_i$ & = & the triple point supported on $p_i$, that is not contained in a line; \\
$q_{ij}$ & = & the triple point supported on $p_i$ and contained in the line $p_i p_j$.
\end{tabular}

\subsection{Fixed points in $\N_0$}    
\label{2.1}

Consider the Hilbert scheme of zero-dimensional subschemes of $\PP^2$ of length $6$.
Let $\Hilb^0_{\PP^2}(6)$ be the open subset of schemes that are not contained in a conic curve.
According to \cite[Propositions 4.5 and 4.6]{modules-alternatives}, there is a $T$-equivariant isomorphism
\[
\N_0 \isom \Hilb^0_{\PP^2}(6), \qquad [\f] \mapsto \Z,
\]
where the ideal sheaf of $\Z$ is the cokernel of $\f^\TR \colon 3\O(-4) \to 4\O(-3)$.
Equivalently, $\Z$ is the zero-set of the ideal generated by the maximal minors $\zeta_1$, $\zeta_2$, $\zeta_3$, $\zeta_4$ of $\f$.
Thus, up to equivalence, $\f$ is uniquely determined by its maximal minors.
The classification of the $T$-invariant subschemes $\Z \in \Hilb^0_{\PP^2}(6)$ is well-known.
Note that $\Z$ is supported on a subset of $\{ p_0, p_1, p_2 \}$, so it has one of the following types:
$(2,2,2)$, $(1,2,3)$, $(1,1,4)$, $(1,5)$, $(2,4)$, $(3,3)$, $6$.

\subsubsection{Type $(2,2,2)$} 
\label{2.1.1}

There are two $T$-invariant subschemes $\Z$ of type $(2,2,2)$, namely $\{ p_{01}, p_{12}, p_{20} \}$ and
$\{ p_{02}, p_{21}, p_{10} \}$.
The ideal of the former is
\[
(Y^2, Z) \cap (Z^2, X) \cap (X^2, Y) = (XYZ, YZ^2, X^2 Z, XY^2),
\]
so it corresponds to the matrix
\[
\boldsymbol{\alpha} =
{\mathversion{bold}
\left[
\ba{cccc}
Z & X & 0 & 0 \\
X & 0 & Y & 0 \\
Y & 0 & 0 & Z
\ea
\right]}
\]
in $\N_0$. The matrix corresponding to the other scheme is $\alpha_{\sigma}$, where $\sigma$ is the transposition $(X,Y)$.

\subsubsection{Type $(1,2,3)$} 
\label{2.1.2}

Note that $\Z$ cannot contain a subscheme of the form $q_{ij}$, otherwise $\Z$ would be contained
in the union of two lines. Thus, $\Z$ contains a subscheme of the form $q_i$, say $q_2$.
Note that $\{ p_0, p_{12}, q_2 \}$ is contained in $p_0 p_2 \cup p_1 p_2$.
Thus, there are six schemes obtained from $\Z = \{ p_0, p_{10}, q_2 \}$ by the action of $\Sym_3$.
Since
\[
I(\Z) = (Y,Z) \cap (X^2, Z) \cap (X^2, XY, Y^2) = (XYZ, Y^2 Z, X^2 Z, X^2 Y),
\]
we see that $\Z$ corresponds to the matrix
\[
\boldsymbol{\beta} =
{\mathversion{bold}
\left[
\ba{cccc}
Y & X & 0 & 0 \\
X & 0 & Y & 0 \\
X & 0 & 0 & Z
\ea
\right]}.
\]

\subsubsection{Type $(3,3)$} 
\label{2.1.3}

The two subschemes of $\Z$ of length $3$ cannot be each contained in a line.
The scheme $\{ q_0, q_1 \}$ is unfeasible because it is contained in the conic curve $\{ Z^2 = 0\}$.
It follows that $\Z = \{ q_i, q_{jk} \}$, for distinct $i, j, k$.
There are six such schemes obtained from $\Z = \{ q_{01}, q_2 \}$ by the action of $\Sym_3$.
Since
\[
I(\Z) = (Y^3, Z) \cap (X^2, XY, Y^2) = (Y^2 Z, XYZ, X^2 Z, Y^3),
\]
we deduce that $\Z$ corresponds to the matrix
\[
\boldsymbol{\gamma} =
{\mathversion{bold}
\left[
\ba{cccc}
Y & X & 0 & 0 \\
X & 0 & Y & 0 \\
0 & Y & 0 & Z
\ea
\right]}.
\]

\subsubsection{Type $(1,1,4)$}  
\label{2.1.4}

Let $r$ be the point of $\Z$ of multiplicity $4$, supported say at $p_2$.
Note that $r$ cannot be contained in a line, otherwise $\Z$ would be contained in the union
of two lines.
Thus, $I(r) \cap \spann\{ X^2, XY, Y^2 \}$ is a $T$-invariant subspace of dimension $2$.
This subspace must be generated by two invariant monomials.
Note that $r$ cannot be contained in the conic curve $\{ XY = 0 \}$, otherwise $\Z$ would be contained
in the said conic.
Thus, $I(r) = (X^2, Y^2)$; we denote this point by $r_2$.
We obtain three schemes: $\{ r_0, p_1, p_2 \}$, $\{ r_1, p_0, p_2 \}$, $\{ r_2, p_0, p_1 \}$.
The ideal of the latter is
\[
(X, Z) \cap (Y, Z) \cap (X^2, Y^2) = (Y^2 Z, X^2 Z, X Y^2, X^2 Y),
\]
so it corresponds to the matrix
\[
\boldsymbol{\delta} =
{\mathversion{bold}
\left[
\ba{cccc}
Y & Z & 0 & 0 \\
0 & Y & X & 0 \\
0 & 0 & Z & X
\ea
\right]}.
\]
The other two matrices are $\delta_{\sigma}$, where $\sigma = (X,Z)$, respectively, $(Y,Z)$.

\subsubsection{Type $(2,4)$}   
\label{2.1.5}

Let $r$ be the point of $\Z$ of multiplicity $4$.
As before, $r$ cannot be contained in a line.
Moreover, $r \neq r_2$, otherwise $\Z$ would be contained in the conic curve
$\{ X^2 = 0 \}$ or $\{ Y^2 = 0 \}$.
Thus, $r= r_{20}$ or $r=r_{21}$, where $r_{20}$ is given by the ideal $(X^3, XY, Y^2)$
and $r_{21}$ is given by the ideal $(X^2, XY, Y^3)$.
We get six schemes obtained from $\Z = \{ p_{10}, r_{20} \}$ by the action of $\Sym_3$.
Since
\[
I(\Z) = (X^2, Z) \cap (X^3, XY, Y^2) = (Y^2 Z, XYZ, X^2 Y, X^3),
\]
we deduce that $\Z$ corresponds to the matrix
\[
\boldsymbol{\varepsilon} =
{\mathversion{bold}
\left[
\ba{cccc}
X & Y & 0 & 0 \\
Z & 0 & X & 0 \\
0 & 0 & Y & X
\ea
\right]}.
\]

\subsubsection{Type $(1,5)$}     
\label{2.1.6}

Let $s$ be the point of $\Z$ of multiplicity $5$, supported say on $p_2$.
Note that $s$ cannot be contained in a line or in the conic $\{ XY = 0\}$.
It follows that $s = s_{20}$ or $s = s_{21}$, where $s_{20}$ is given by the ideal
$(X^3, X^2 Y, Y^2)$ and $s_{21}$ is given by the ideal $(X^2, X Y^2, Y^3)$.
Letting $\Sym_3$ act on $\Z= \{ p_1, s_{20} \}$ we obtain six $T$-fixed schemes.
The ideal of $\Z$ is
\[
(X,Z) \cap (X^3, X^2 Y, Y^2) = (Y^2 Z, X Y^2, X^2 Y, X^3),
\]
hence $\Z$ corresponds to the matrix
\[
\boldsymbol{\zeta} =
{\mathversion{bold}
\left[
\ba{cccc}
Y & X & 0 & 0 \\
X & 0 & Y & 0 \\
0 & Z & 0 & X
\ea
\right]}.
\]

\subsubsection{Type $6$}  
\label{2.1.7}

There is only one $T$-invariant point of multiplicity $6$ supported on $p_2$.
Its ideal is $(X^3, X^2 Y, X Y^2, Y^3)$ and the associated matrix is
\[
\boldsymbol{\eta} =
{\mathversion{bold}
\left[
\ba{cccc}
Y & X & 0 & 0 \\
X & 0 & Y & 0 \\
0 & 0 & X & Y
\ea
\right]}.
\]
The matrices corresponding to the other two points of multiplicity $6$
are $\eta_{\sigma}$, where $\sigma = (X, Z)$, respectively, $(Y,Z)$.


\subsection{Fixed points in $\N_1$} 
\label{2.2}

Consider $\f \in W$.
Assume that $\gcd(\zeta_1, \zeta_2, \zeta_3, \zeta_4) = l$ for some $l \in V^*$.
Let $L \subset \PP^2$ denote the line given by the equation $l = 0$.
According to \cite[Proposition 3.3.5]{illinois}, $\f$ is semi-stable if and only if $\zeta_1$, $\zeta_2$, $\zeta_3$, $\zeta_4$
are linearly independent, which we assume in the sequel.
Consider the vector space $U_{\f} = \spann \{ \zeta_1, \zeta_2, \zeta_3, \zeta_4 \} \subset \Sym^3 V^*$.

\begin{proposition}
\label{2.2.1}
Assume that $[\f]$ is $T$-invariant.
Then $l \in \{ X, Y, Z \}$ and any monomial occurring in any $\zeta_i$, $1 \le i \le 4$,
belongs to $U_{\f}$. Moreover, there are distinct monomials
$\xi_1, \xi_2, \xi_3, \xi_4 \in \Sym^3 V^*$ such that
$U_{\f} = \spann \{ \xi_1, \xi_2, \xi_3, \xi_4 \}$.
\end{proposition}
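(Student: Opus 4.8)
The plan is to reduce all three assertions to one fact: that the span $U_\f$ of the maximal minors is a $T$-invariant linear subspace of $\Sym^3 V^*$. Once that is known, the rest is elementary linear algebra with monomials.

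\textbf{Step 1 --- invariance of $U_\f$.} The key preliminary observation is that $U_\f$ depends only on the $G$-orbit of $\f$. Under $\f\mapsto h\f g^{-1}$ the tuple $(\zeta_1,\dots,\zeta_4)$ is carried to an invertible linear combination of itself: this is Cauchy--Binet, the maximal minors of $h\f g^{-1}$ being $\det(h)$ times the entries of $\Lambda^3(g^{-1})$ applied to the maximal minors of $\f$ (up to signs). Since by our standing hypothesis the $\zeta_i$ are linearly independent (\cite[Proposition 3.3.5]{illinois}), the $4$-dimensional span $U_\f$ is unchanged, so $[\f]\mapsto U_\f$ is a well-defined invariant of the point $[\f]\in\N_1$. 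Next, each $\zeta_i$ is a $3\times 3$ determinant, hence homogeneous of degree $3$ in the matrix entries, so the action of $t\in(\CC^*)^3$ on $W$ by multiplication on $V^*$ satisfies $\zeta_i(t\f)=t\cdot\zeta_i(\f)$ for the induced action on $\Sym^3 V^*$; therefore $U_{t\cdot[\f]}=t\cdot U_\f$. As $[\f]$ is assumed $T$-fixed, this forces $t\cdot U_\f=U_\f$ for all $t\in(\CC^*)^3$; since the centre $(s,s,s)$ acts on $\Sym^3 V^*$ by the scalar $s^3$, which preserves every subspace, $U_\f$ is a $T$-invariant subspace of $\Sym^3 V^*$.

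\textbf{Step 2 --- the two span assertions.} The ten monomials $X^iY^jZ^k$ with $i+j+k=3$ form a basis of $\Sym^3 V^*$ consisting of $(\CC^*)^3$-eigenvectors with pairwise distinct characters $(t_0,t_1,t_2)\mapsto t_0^i t_1^j t_2^k$. Hence any invariant subspace is the span of the monomials it contains. Applying this to $U_\f$ and using $\dim U_\f=4$, I obtain distinct monomials $\xi_1,\dots,\xi_4$ with $U_\f=\spann\{\xi_1,\dots,\xi_4\}$, which is the last assertion. Moreover, since $U_\f$ has a basis consisting of monomials, every element of $U_\f$ --- in particular each $\zeta_i$ --- is a linear combination of $\xi_1,\dots,\xi_4$ alone, so every monomial occurring in a $\zeta_i$ lies in $U_\f$; this is the second assertion.

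\textbf{Step 3 --- identifying $l$, and the main obstacle.} Here I would observe that $l=\gcd(\zeta_1,\zeta_2,\zeta_3,\zeta_4)$ divides every $\CC$-linear combination of the $\zeta_i$, that is, $U_\f\subseteq l\cdot\Sym^2 V^*$; in particular $l$ divides the degree-$3$ monomial $\xi_1$. The only linear forms dividing a monomial of degree $3$ are the scalar multiples of $X$, $Y$ and $Z$, whence $l\in\{X,Y,Z\}$ up to a nonzero scalar. I expect no genuine difficulty in the argument: the one point requiring care is Step 1, namely checking that passing to maximal minors turns the $G$-action into a linear action preserving the span $U_\f$, and keeping straight the (harmless) distinction between $(\CC^*)^3$ and the quotient torus $T$ when speaking of invariant subspaces.
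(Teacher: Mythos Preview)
Your proof is correct and follows essentially the same approach as the paper: both hinge on the observation that $U_\f$ is a $T$-invariant subspace of $\Sym^3 V^*$, from which everything else follows. The only cosmetic differences are that the paper deduces $l\in\{X,Y,Z\}$ first (via $\spann\{tl\}=\spann\{l\}$) rather than last (via $l\mid\xi_1$), and uses an explicit Vandermonde argument on $t_1\zeta_1,\dots,t_k\zeta_1$ in place of your appeal to the standard fact that a torus-invariant subspace with distinct weights is spanned by the weight vectors it contains.
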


\begin{proof}
For every $t \in T$, $t \f \sim \f$, hence $U_{t \f} = U_{\f}$,
hence $\spann \{ t l \} = \spann \{ l \}$. Thus, $l$ is a monomial.
Let $\xi_1, \ldots, \xi_k$ be the distinct monomials occurring in $\zeta_1$.
We can find $t_1, \ldots, t_k \in T$ such that $t_1 \zeta_1, \ldots, t_k \zeta_1$
are linearly independent. Thus, each $\xi_i$, $1 \le i \le k$, is a linear combination
of $t_1 \zeta_1, \ldots, t_k \zeta_1$, so it belongs to $U_{\f}$.
This shows that exactly four monomials occur in $\zeta_1, \zeta_2, \zeta_3, \zeta_4$,
and these monomials span $U_{\f}$.
\end{proof}

\noindent
Arguing as in the proof of \cite[Proposition 3.3.6]{illinois}, we can show that,
after performing elementary row and column operations, we may write
\[
\f = \left[
\ba{cccc}
 & \upsilon & & \ba{c} 0 \\ 0 \ea \\
\star & \star & \star & l'
\ea
\right],
\]
where $\upsilon$ has linearly independent maximal minors.
Thus, $\upsilon$ gives a point in $\N(3, 3, 2)$.
We will examine several cases according to the different possibilities for $\Coker(\upsilon)$.

Assume first that the maximal minors of $\upsilon$ have no common factor.
This condition defines an open subset $\N_0(3, 3, 2) \subset \N(3, 3, 2)$.
Consider the Hilbert scheme of zero-dimensional subschemes of length $3$ of $\PP^2$
and let $\Hilb^0_{\PP^2}(3)$ denote the open subset of subschemes that are not contained in a line.
According to \cite[Propositions 4.5 and 4.6]{modules-alternatives},
we have an isomorphism
\[
\N_0(3, 3, 2) \isom \Hilb^0_{\PP^2}(3), \qquad [\upsilon] \mapsto \Y,
\]
where the ideal sheaf of $\Y$ is the cokernel of $\upsilon^\TR \colon 2\O(-3) \to 3\O(-2)$.
Equivalently, $\Y$ is the zero-set of the ideal generated by the maximal minors of $\upsilon$.
Thus, up to equivalence, $\upsilon$ is uniquely determined by its maximal minors.
Note that $l' = l$. Without loss of generality we may assume that $l' =X$, the other cases
being obtained by a permutation of the variables.
We have an extension
\[
0 \lra \O_L(-1) \lra \Coker(\f) \lra \O_{\Y} \lra 0.
\]
In the sequel $\f$ will represent a $T$-fixed point in $\N(3,4,3)$.

\setcounter{subsubsection}{1}

\subsubsection{The case when $\Y$ is the union of three non-colinear closed points} 
\label{2.2.2}

Let $\Y' \subset \Y$ be the subscheme supported on $\PP^2 \setminus L$.
For $t \in T$ denote by $\mu_t \colon \PP^2 \to \PP^2$ the  map of multiplication by $t$.
Note that $\mu_t^* (\Coker(\f)) \isom \Coker(\f)$, hence, taking into account the exact sequence from above,
$\mu_t^* (\O_{\Y'}) \isom \O_{\Y'}$.
We deduce that $\Y' = \{ p_0 \}$.
The other two points of $\Y$ are given by the ideals
$(X, aY + bZ)$, respectively, $(X, cY + dZ)$, where $ad - bc \neq 0$.
It follows that
\begin{align*}
I(\Y) & = (aY + bZ, cY + dZ) \cap (X, aY + bZ) \cap (X, cY + dZ) \\
 & = (X(aY + bZ), \, X(cY + dZ), \, (aY + bZ)(cY + dZ)).
\end{align*}
Thus, we may write
\[
\f = \left[
\ba{cccc}
X & aY+bZ & 0 & 0 \\
X & 0 & cY+dZ & 0 \\
l_1 & l_2 & l_3 & X
\ea
\right].
\]
Since $X$ divides $\zeta_4$, $X$ also divides $l_1$, so, performing column operations, we may assume that $l_1 = 0$.
Note that $a, b, c, d$ cannot be all non-zero, otherwise $U_{\f}$ would contain the set $X \{ XY, XZ, Y^2, YZ, Z^2 \}$,
which is contrary to Proposition \ref{2.2.1}.
Assume that $b=0$. Thus, $a \neq 0$, $d \neq 0$ and
\[
\f \sim \left[
\ba{cccc}
X & Y & 0 & 0 \\
X & 0 & cY+Z & 0 \\
0 & eZ & fY & X
\ea
\right].
\]
If $e \neq 0$, then $c = 0$, otherwise $X \{ Y^2, YZ, XY, XZ, Z^2 \} \subset U_{\f}$, contradicting Proposition \ref{2.2.1}.
Analogously, $f = 0$.
We obtain the matrix
\[
\boldsymbol{\theta} =
{\mathversion{bold}
\left[
\ba{cccc}
X & Y & 0 & 0 \\
X & 0 & Z & 0 \\
0 & Z & 0 & X
\ea
\right]},
\]
which clearly represents a $T$-fixed point.
If $e=0$, then, by semi-stability, $f \neq 0$, so
\[
\f \sim \left[
\ba{cccc}
X & Y & 0 & 0 \\
X & 0 & cY + Z & 0 \\
0 & 0 & Y & X
\ea
\right] \sim \left[
\ba{cccc}
X & Y & 0 & 0 \\
X & 0 & Z & 0 \\
0 & 0 & Y & X
\ea
\right] \sim \theta_{(Y, Z)}.
\]
In conclusion, we have six $T$-fixed points
{\mathversion{bold} $[\theta_{\sigma}]$, $\sigma \in \Sym_3$}.
They are distinct because $U_{\theta} = \spann \{ XYZ, X^2 Z, X^2 Y, XZ^2 \}$ is not fixed
by any $\sigma \in \Sym_3$.

\subsubsection{The case when $\Y$ is the union of a closed point on $L$ and a double point outside $L$} 
\label{2.2.3}

Assume that $\Y = \{ p, q \}$, where $p$ is a closed point and $q$ is a double point.
Arguing as at Section \ref{2.2.2}, we see that $p$ and $q$ cannot be both in $\PP^2 \setminus L$.
Assume that $p \in L$ and $q \in \PP^2 \setminus L$.
Then $q \in \{ p_{01}, p_{02} \}$, say $q = p_{02}$.
Thus, $p$ is given by the ideal $(X, aY+Z)$ for some $a \in \CC$
and $q$ is given by the ideal $(Y, Z(aY+Z))$.
We have
\[
I(\Y) = (X, aY + Z) \cap (Y, Z(aY + Z)) = (XY, \, Y(aY + Z), \, Z(aY + Z)),
\]
so we may write
\[
\f = \left[
\ba{cccc}
Y & Z & 0 & 0 \\
0 & X & aY+Z & 0 \\
l_1 & l_2 & l_3 & X
\ea
\right].
\]
Denote $l_i = a_i X + b_i Y + c_i Z$, $i = 1, 2, 3$.
Since $X$ divides $\zeta_4$, it follows that $X$ divides $Y l_2 - Z l_1$,
hence $X$ divides $Y (b_2 Y + c_2 Z) - Z (b_1 Y + c_1 Z)$,
hence $b_2 = 0$, $c_1 = 0$, $b_1 = c_2$.
Performing elementary row and column operations, we may write
\[
\f = \left[
\ba{cccc}
Y & Z & 0 & 0 \\
0 & X & aY + Z & 0 \\
0 & 0 & l_3 & X
\ea
\right].
\]
Performing elementary operations on $\f$
we may assume that $a_3 = 0$ and $c_3 = 0$. Note that $b_3 \neq 0$, otherwise the semi-stability of $\f$
would get contradicted. Thus, we may write
\[
\f = \left[
\ba{cccc}
Y & Z & 0 & 0 \\
0 & X & aY+Z & 0 \\
0 & 0 & Y & X
\ea
\right]. \quad \text{Consider} \quad \psi_c = \left[
\ba{cccc}
Y & Z & 0 & 0 \\
0 & X & acY+Z & 0 \\
0 & 0 & Y & X
\ea
\right]
\]
for each $c \in \CC^*$.
Note that $\{ [t \f], t \in T \} = \{ [\psi_c], c \in \CC^* \}$, hence $\f \sim \psi_c$ for all $c \in \CC^*$.
The orbits for the action of $G$ on $W^\ss$ are closed because there are no properly
semi-stable points. Thus, ${\displaystyle \f \sim \lim_{c \to 0} \psi_c = \iota}$, where
\[
\boldsymbol{\iota} =
{\mathversion{bold}
\left[
\ba{cccc}
Y & Z & 0 & 0 \\
0 & X & Z & 0 \\
0 & 0 & Y & X
\ea
\right]}
\]
clearly determines a $T$-fixed point in $\N$.
Note that $\iota \nsim \theta_{\sigma}$ for all $\sigma \in \Sym_3$ because $\type(\theta) = (2,1,2)$ whereas
$\type(\iota) = (2,2,2)$.
Thus, we obtain six new fixed points {\mathversion{bold} $[\iota_{\sigma}]$, $\sigma \in \Sym_3$}.
They are distinct because $U_{\iota} = \spann \{ XZ^2, XYZ, X^2 Y, X Y^2 \}$
is not fixed by any $\sigma \in \Sym_3$.

\subsubsection{The case when $\Y$ is the union of a closed point outside $L$ and a double point} 
\label{2.2.4}

Assume that $\Y = \{ p, q \}$, where $p \in \PP^2 \setminus L$ is a closed point and $q$ is a double point.
Then, as before, $p=p_0$ and $\red(q) \in L$. In fact, we will show that $q$ is a subscheme of $L$.
Assume that the contrary is true.
Then $I(q)= ((aY+bZ)^2, X+aY+bZ)$, where $a$ and $b$ are not both zero, say $a \neq 0$.
We have $I(p) = (aY + bZ, Z)$, hence
\[
I(\Y) = (Z(X + aY + bZ), \, (aY + bZ)(X + aY + bZ), \, (aY + bZ)^2).
\]
We may now write
\[
\f = \left[
\ba{cccc}
X + aY + bZ & aY+bZ & 0 & 0 \\
0 & Z & aY+bZ & 0 \\
\star & \star & \star & X
\ea
\right].
\]
Note that $b=0$, otherwise $X \{ Y^2, YZ, Z^2, XY, XZ \} \subset U_{\f}$, contrary to Proposition \ref{2.2.1}.
After we perform elementary row and column operations, we may write
\[
\f = \left[
\ba{cccc}
X + aY & Y & 0 & 0 \\
0 & Z & Y & 0 \\
cY+dZ & 0 & eY+fZ & X
\ea
\right].
\]
Note that $U_{\f} = \spann \{ XY^2, X^2Y, X^2Z, XYZ \}$.
Since
\[
\zeta_4= eXYZ + fXZ^2 + (ae+d) Y^2 Z + afYZ^2 + cY^3
\]
we deduce, by virtue of Proposition \ref{2.2.1}, that $f=0$, $ae+d=0$, $c=0$. Thus, $e \neq 0$ and
\[
\f \sim 
\left[
\ba{cccc}
X + aY & Y & 0 & 0 \\
0 & Z & Y & 0 \\
-aZ & 0 & Y & X
\ea
\right] \sim \left[
\ba{cccc}
X & Y & 0 & 0 \\
-aZ & Z & Y & 0 \\
0 & -Z & 0 & X
\ea
\right].
\]
For each $c \in \CC^*$ consider the morphism
\[
\psi_c = \left[
\ba{cccc}
X & Y & 0 & 0 \\
-acZ & Z & Y & 0 \\
0 & -Z & 0 & X
\ea
\right].
\]
Note that ${\displaystyle \psi = \lim_{c \to 0} \psi_c}$ belongs to $W^\ss$
because $U_{\psi} = \spann \{ XY^2, X^2Y, X^2Z, XYZ \}$ has dimension $4$.
By the argument at Section \ref{2.2.3}, we deduce that $\f \sim \psi$.
However, this is absurd, because $\type(\f) \neq \type(\psi)$.

The above discussion shows that $q$ is a subscheme of $L$, so $I(q) = (X, (aY+bZ)^2)$,
where $a$, $b$ are not both zero, say $a \neq 0$. We have
\[
I(\Y) = (aY + bZ, Z) \cap I(q) = (XZ, \, X(aY + bZ), \, (aY + bZ)^2),
\]
so we may write
\[
\f = \left[
\ba{cccc}
X & aY+bZ & 0 & 0 \\
0 & Z & aY+bZ & 0 \\
\star & \star & \star & X
\ea
\right].
\]
Note that $X \{ aY^2, abYZ, bZ^2, aXY, XZ \} \subset U_{\f}$, hence, by Proposition \ref{2.2.1},
$b=0$, that is, $q=p_{21}$.
We may write
\[
\f = \left[
\ba{cccc}
X & Y & 0 & 0 \\
0 & Z & Y & 0 \\
l_1 & l_2 & l_3 & X
\ea
\right].
\]
Since $X$ divides $\zeta_4$, $X$ also divides $l_1$, hence, performing row and column operations, we may write
\[
\f = \left[
\ba{cccc}
X & Y & 0 & 0 \\
0 & Z & Y & 0 \\
0 & 0 & cY+dZ & X
\ea
\right].
\]
Note that $X \{ Y^2, XY, XZ, cYZ, dZ^2 \} \subset U_{\f}$, hence, in view of Proposition \ref{2.2.1},
$c=0$ or $d=0$. If $d=0$, then
\[
\f \sim \left[
\ba{cccc}
X & Y & 0 & 0 \\
0 & Z & Y & 0 \\
0 & 0 & Y & X
\ea
\right] \sim \left[
\ba{cccc}
X & Y & 0 & 0 \\
0 & Z & 0 & X \\
0 & 0 & Y & X
\ea
\right] \sim \left[
\ba{cccc}
X & 0 & Y & 0 \\
0 & Y & 0 & X \\
0 & 0 & Z & X
\ea
\right] \sim \theta_{(Y, Z)}.
\]
If $c=0$, then we obtain the matrix
\[
\boldsymbol{\kappa} =
{\mathversion{bold}
\left[
\ba{cccc}
X & Y & 0 & 0 \\
0 & Z & Y & 0 \\
0 & 0 & Z & X
\ea
\right]}
\]
representing a $T$-fixed point.
Note that $U_{\kappa} = \spann \{ XY^2, X^2Y, X^2Z, XZ^2 \} \neq U_{\iota_{\sigma}}$,
hence $[\kappa] \neq [\iota_{\sigma}]$ for all $\sigma \in \Sym_3$.
Moreover considering types we see that $[\kappa] \neq [\theta_{\sigma}]$ for all $\sigma \in \Sym_3$.
Since $\kappa \sim \kappa_{(Y,Z)}$, we conclude that we get three new $T$-fixed points,
{\mathversion{bold} $[\kappa_{\sigma}]$, $\sigma \in \A_3$}.

\subsubsection{The case when $\Y$ is the union of a closed point on $L$ and a double point whose
support is on $L$}    
\label{2.2.5}

Assume that $\Y = \{ p, q \}$, where $p, \red(q) \in L$.
Thus,
\[
I(q) = (X^2, eX+aY+bZ), \qquad I(p) = (X, cY+dZ),
\]
where $ad - bc \neq 0$. It follows that
\[
I(\Y) = (X(eX + aY + bZ), \, (cY + dZ)(eX + aY + bZ), \, X^2),
\]
so we may write
\[
\f = \left[
\ba{cccc}
eX+aY+bZ & X & 0 & 0 \\
0 & cY+dZ & X & 0 \\
\star & \star & \star & X
\ea
\right].
\]
Note that $X \{ X^2, aXY, bXZ, acY^2, bdZ^2, (ad+bc)YZ \} \subset U_{\f}$,
hence, by Proposition \ref{2.2.1}, $a=0$ or $b=0$.
We may assume that $b=0$, the case when $a=0$ being obtained by a permutation of variables.
Thus, $d \neq 0$.
Assume first that $e \neq 0$.
Then $X \{ X^2, XY, XZ, YZ, acY^2 \} \subset U_{\f}$, forcing $c=0$, in view of Proposition \ref{2.2.1}.
We may write
\[
\f = \left[
\ba{cccc}
eX + Y & X & 0 & 0 \\
0 & Z & X & 0 \\
l_1 & l_2 & l_3 & X
\ea
\right].
\]
Since $X$ divides $\zeta_4$, we see that $X$ divides $l_3$, so, performing row and column operations,
we may assume that $l_3 =0$ and $l_2 = fY$.
Since $fXY^2$ belongs to $U_{\f} = \spann \{ X^3, X^2Y, X^2Z, XYZ \}$, we see that $f=0$, hence
\[
\f \sim \left[
\ba{cccc}
eX+Y & X & 0 & 0 \\
0 & Z & X & 0 \\
Z & 0 & 0 & X
\ea
\right] \sim \left[
\ba{cccc}
Y & X & 0 & 0 \\
-eZ & Z & X & 0 \\
Z & 0 & 0 & X
\ea
\right] \sim
{\mathversion{bold}
\left[
\ba{cccc}
Y & X & 0 & 0 \\
0 & Z & X & 0 \\
Z & 0 & 0 & X
\ea
\right]} = \boldsymbol{\lambda}.
\]
Clearly, $\lambda$ represents a $T$-fixed point of $\N$ different from
$\theta_{\sigma}$, $\iota_{\sigma}$, $\kappa_{\sigma}$ for all $\sigma \in \Sym_3$
because $\type(\lambda)=(3,1,2)$ is different from the types of the other points.
We obtain six new $T$-fixed points {\mathversion{bold} $[\lambda_{\sigma}]$, $\sigma \in \Sym_3$}.

Assume now that $e=0$. As $b = 0$, we have $a \neq 0$.
Thus, we may write
\[
\f = \left[
\ba{cccc}
Y & X & 0 & 0 \\
0 & cY+Z & X & 0 \\
l_1 & l_2 & l_3 & X
\ea
\right].
\]
As before, $X$ divides $l_3$, hence, performing row and column operations, we may write
\[
\f = \left[
\ba{cccc}
Y & X & 0 & 0 \\
0 & cY+Z & X & 0 \\
gZ & fY & 0 & X
\ea
\right].
\]
Assume first that $c \neq 0$.
Note that $X \{ X^2, XY, Y^2, YZ, gXZ \} \subset U_{\f}$, hence, in view of Proposition \ref{2.2.1}, $g = 0$ and
\[
\f \sim 
\left[
\ba{cccc}
Y & X & 0 & 0 \\
0 & cY+Z & X & 0 \\
0 & Y & 0 & X
\ea
\right] \sim
{\mathversion{bold}
\left[
\ba{cccc}
Y & X & 0 & 0 \\
0 & Z & X & 0 \\
0 & Y & 0 & X
\ea
\right] = \boldsymbol{\mu}}.
\]
Clearly, $[\mu]$ is $T$-fixed. Considering types we can see that
$[\mu] \neq [\theta_{\sigma}]$, $[\iota_{\sigma}]$, $[\kappa_{\sigma}]$ for all $\sigma \in \Sym_3$.
Moreover, $[\mu] \neq [\lambda_{(Y,Z)}]$ because $U_{\mu} \neq U_{\lambda_{(Y,Z)}}$.
Thus, we obtain six new points {\mathversion{bold} $[\mu_{\sigma}]$, $\sigma \in \Sym_3$}.
Finally, we assume that $c=0$, so we may write
\[
\f = 
\left[
\ba{cccc}
Y & X & 0 & 0 \\
0 & Z & X & 0 \\
gZ & fY & 0 & X
\ea
\right].
\]
We have $U_{\f} = \spann \{ X^3, X^2Y, XYZ, X(fY^2 - gXZ) \}$, hence, by Proposition \ref{2.2.1}, $f=0$ or $g=0$.
We obtain the fixed points $[\lambda]$, respectively, $[\mu]$.

\subsubsection{The case when $\Y$ is a triple point}  
\label{2.2.6}

Assume that $\Y$ is a triple point that is not contained in a line.
If $\Y \subset \PP^2 \setminus L$, then $\Y$ is $T$-fixed, hence $\Y = \{ q_0 \}$ and
$I(\Y) = (Y^2, YZ, Z^2)$.
Recalling from the beginning of Section \ref{2.2} that the generators of $I(\Y)$ determine $\upsilon$ up to equivalence, we may write
\[
\f = \left[
\ba{cccc}
Y & Z & 0 & 0 \\
0 & Y & Z & 0 \\
l_1 & l_2 & l_3 & X
\ea
\right],
\]
where $l_1, l_2, l_3 \in \CC[Y,Z]$.
Since $X$ divides $\zeta_4$ and $\zeta_4 \in \CC[Y,Z]$, we deduce that $\zeta_4=0$,
which is contrary to our assumption that $\f$ give a point in $\N_1$.
This shows that $\red(\Y)$ is a point on $L$,
given by the ideal $(X, aY+bZ)$, where $a$, $b$ are not both zero, say $a \neq 0$.
The possible ideals defining $\Y$ are
\begin{enumerate}
\item[(i)] $(X^2, X(aY+bZ), (aY+bZ)^2)$,
\item[(ii)] $(X^2, X(aY+bZ), (aY+bZ)^2 - XZ)$,
\item[(iii)] $((cX+aY+bZ)^2, (cX+aY+bZ)X, X^2 - (cX+aY+bZ)Z)$.
\end{enumerate}
In the first case we may write
\[
\f = \left[
\ba{cccc}
X & aY+bZ & 0 & 0 \\
0 & X & aY+bZ & 0 \\
\star & \star & \star & X
\ea
\right].
\]
Note that $X \{ a^2 Y^2, ab YZ, b^2 Z^2, aXY, b XZ, X^2 \} \subset U_{\f}$,
hence, by Proposition \ref{2.2.1}, $b=0$, that is $\Y = \{ q_2 \}$. We may now write
\[
\f = \left[
\ba{cccc}
X & Y & 0 & 0 \\
0 & X & Y & 0 \\
l_1 & l_2 & l_3 & X
\ea
\right].
\]
By hypothesis $X$ divides $\zeta_4$, hence $X$ divides $l_1$.
Performing elementary row and column operations, we may assume that
\[
\f = \left[
\ba{cccc}
X & Y & 0 & 0 \\
0 & X & Y & 0 \\
0 & cZ & dZ & X
\ea
\right].
\]
Note that $X \{ Y^2, XY, X^2, dXZ, cYZ \} \subset U_{\f}$, hence, by Proposition \ref{2.2.1}, $c=0$ or $d=0$.
In the case when $d=0$ we obtain the fixed point $[\mu]$.
In the case when $c=0$ we obtain a matrix
\[
\boldsymbol{\nu} =
{\mathversion{bold}
\left[
\ba{cccc}
X & Y & 0 & 0 \\
0 & X & Y & 0 \\
0 & 0 & Z & X
\ea
\right]}
\]
that represents a $T$-fixed point in $\N$.
Considering types we see that $[\nu] \neq [\theta_{\sigma}]$, $[\iota_{\sigma}]$, $[\kappa_{\sigma}]$ for all $\sigma \in \Sym_3$.
Moreover, $[\nu] \neq [\lambda_{(Y,Z)}]$, $[\mu]$ because $U_{\nu} \neq U_{\lambda_{(Y,Z)}}$, $U_{\mu}$.
Thus, we obtain six new $T$-fixed points {\mathversion{bold}$[\nu_{\sigma}]$, $\sigma \in \Sym_3$}.

Assume now that $\Y$ has the ideal given at (ii).
We may write
\[
\f = \left[
\ba{cccc}
aY+bZ & X & 0 & 0 \\
Z & aY+bZ & X & 0 \\
\star & \star & \star & X
\ea
\right].
\]
Note that $X \{ X^2, XY, XZ, Y^2, b^2 Z^2 \} \subset U_{\f}$, hence, by Proposition \ref{2.2.1}, $b=0$
and, performing row and column operations, we may write
\[
\f = \left[
\ba{cccc}
aY & X & 0 & 0 \\
Z & aY & X & 0 \\
l_1 & l_2 & l_3 & X
\ea
\right]. \quad
\text{Moreover,} \quad
\f \sim \left[
\ba{cccc}
aY & X & 0 & 0 \\
Z & aY & X & 0 \\
0 & cY+dZ & 0 & X
\ea
\right]
\]
because $X$ divides $l_3$, since $X$ divides $\zeta_4$.
Since $ad XYZ \in U_{\f}$, we deduce that $d=0$ and
we obtain the fixed point $[\nu]$.

Assume, finally, that $\Y$ has the ideal given at (iii).
Thus, we may assume that
\[
\f = \left[
\ba{cccc}
X & cX+aY+bZ & 0 & 0 \\
Z & X & cX+aY+bZ & 0 \\
\star & \star & \star & X
\ea
\right].
\]
Notice that $X \{ X^2, Y^2, XY, YZ, b Z^2, cXZ \} \subset U_{\f}$ hence, by Proposition \ref{2.2.1},
$b=0$, $c=0$, and we may write
\[
\f = \left[
\ba{cccc}
X & aY & 0 & 0 \\
Z & X & aY & 0 \\
l_1 & l_2 & l_3 & X
\ea
\right].
\]
Since $X$ divides $\zeta_4$, $X$ also divides $Z l_3 - aY l_1$, from which, as in Section \ref{2.2.3}, it follows that
\[
\f \sim \left[
\ba{cccc}
X & aY & 0 & 0 \\
Z & X & Y & 0 \\
0 & Z & 0 & X
\ea
\right]. \quad \text{For $e \in \CC^*$ denote} \quad \psi_e = \left[
\ba{cccc}
X & aY & 0 & 0 \\
eZ & X & Y & 0 \\
0 & Z & 0 & X
\ea
\right].
\]
As in Section \ref{2.2.4}, the morphism ${\displaystyle \psi = \lim_{e \to 0} \psi_e}$ lies in $W^\ss$
and $\f \sim \psi$, which yields a contradiction.

\subsubsection{The case when the maximal minors of $\upsilon$ have a common linear factor}  
\label{2.2.7}

Assume that $\f \in W_1$ represents a $T$-fixed point
and that the maximal minors of $\upsilon$, denoted $\upsilon_1$, $\upsilon_2$, $\upsilon_3$,
have a common linear factor. Then
\[
\gcd(\upsilon_1, \upsilon_2, \upsilon_3) = l = \gcd(\zeta_1, \zeta_2, \zeta_3, \zeta_4).
\]
As before, we may assume that $l$ is a monomial, say $X$, the other cases
being obtained by a permutation of the variables.
Performing column operations, we may assume that the maximal minors of $\upsilon$ are $X^2$, $XY$, $XZ$.
It is now easy to see that we may write
\[
\f = \left[
\ba{cccc}
Y & X & 0 & 0 \\
Z & 0 & X & 0 \\
\star & \star & \star & l'
\ea
\right].
\]
Applying Proposition \ref{2.2.1} we can easily see that $l'$ is a monomial.
Indeed, if, say, $l' = aX + bY$ with $a \neq 0$, $b \neq 0$, then
$X \{ X^2, XY, XZ, Y^2, YZ \} \subset U_{\f}$.
In the sequel we will assume that $l' \in \{ X,Y,Z \}$.
When $l' = Y$ or $l' = Z$ we do not obtain any new fixed points in $\N$.
To see this it is enough to consider only the case when $l' =Y$,
the other case being obtained by swapping $Y$ and $Z$.
Thus, we consider the matrix
\[
\f = \left[
\ba{cccc}
Y & X & 0 & 0 \\
Z & 0 & X & 0 \\
aX+bZ & cZ & dZ & Y
\ea
\right].
\]
Note that $U_{\f} = \spann \{ X^2Y, XY^2, XYZ, aX^3, bX^2Z, dXZ^2 \}$, hence, in view of Proposition \ref{2.2.1},
precisely one among the numbers $a$, $b$, $d$ is non-zero.
We will first reduce the problem to the case when $c=0$.
Assume that $a$ and $c$ are non-zero, so
\[
\f = \left[
\ba{cccc}
Y & X & 0 & 0 \\
Z & 0 & X & 0 \\
aX & cZ & 0 & Y
\ea
\right]. \quad \text{For $e \in \CC^*$ denote} \quad \psi_e = \left[
\ba{cccc}
Y & X & 0 & 0 \\
Z & 0 & X & 0 \\
aX & ecZ & 0 & Y
\ea
\right].
\]
As at Section \ref{2.2.4}, the matrix ${\displaystyle \psi = \lim_{e \to 0} \psi_e}$ belongs to $W^\ss$
and $\f \sim \psi$, which is absurd.
The same argument will lead to a contradiction in the case when $b \neq 0$ and $c \neq 0$.
Assume now that $c \neq 0$, $d \neq 0$, so we may write
\[
\f = \left[
\ba{cccc}
Y & X & 0 & 0 \\
Z & 0 & X & 0 \\
0 & cZ & Z & Y
\ea
\right]. \quad \text{Consider the matrix} \quad \psi = \lim_{e \to 0}
\left[
\ba{cccc}
Y & X & 0 & 0 \\
Z & 0 & X & 0 \\
0 & ecZ & Z & Y
\ea
\right].
\]
Arguing as above we can show that $\f \sim \psi$.
It can be proven that this is impossible, however, for our purposes, all we need is to
observe that $\psi \sim \iota$, so, at any rate, we do not get a new $T$-fixed point.
We have thus reduced to the case when $c=0$.
Thus, $\f$ has one of the following forms:
\[
\left[
\ba{cccc}
Y & X & 0 & 0 \\
Z & 0 & X & 0 \\
X & 0 & 0 & Y
\ea
\right], \qquad \left[
\ba{cccc}
Y & X & 0 & 0 \\
Z & 0 & X & 0 \\
Z & 0 & 0 & Y
\ea
\right], \qquad \left[
\ba{cccc}
Y & X & 0 & 0 \\
Z & 0 & X & 0 \\
0 & 0 & Z & Y
\ea
\right].
\]
We obtain the fixed points $[\mu]$, $[\theta_{(Y,Z)}]$, $[\iota]$.

It remains to examine the case when $l'=X$.
We may write
\[
\f = \left[
\ba{cccc}
Y & X & 0 & 0 \\
Z & 0 & X & 0 \\
0 & aY+bZ & cY+dZ & X
\ea
\right].
\]
Note that $X \{ X^2, XY, XZ, aY^2, (b+c)YZ, dZ^2 \} \subset U_{\f}$,
hence, by Proposition \ref{2.2.1}, precisely one among the numbers $a$, $b+c$, $d$ is non-zero.
When $b+c \neq 0$ we get the fixed point represented by the matrix
\[
{\mathversion{bold}
\boldsymbol{\lambda (b : c)} = \left[
\ba{cccc}
Y & X & 0 & 0 \\
Z & 0 & X & 0 \\
0 & bZ & cY & X
\ea
\right].}
\]
We obtain a set $A$ of fixed points in $\N$ parametrised by $(b : c) \in \PP^1 \setminus \{ (1 : -1) \}$.
The above notation is justified because for $(b : c) = (1 : 0)$ we obtain the point $[\lambda]$
and for $(b : c) = (0 : 1)$ we obtain the point $[\lambda_{(Y,Z)}]$.
If $(b : c) = (1 : -1)$ we obtain the $T$-fixed point in $\N_2$ represented by the matrix
\[
\lambda (1 : -1) = \left[
\ba{cccc}
Y & X & 0 & 0 \\
Z & 0 & X & 0 \\
0 & Z & -Y & X
\ea
\right].
\]
In Section \ref{2.3} below we will see that this matrix is semi-stable.
The point $[\lambda (1 : -1)]$ lies in the closure of $A$, so we get a connected set
\[
\Lambda = \{ [\lambda(b : c)] \mid \ (b : c) \in \PP^1 \} \subset \N^T.
\]
Clearly, $\Lambda$ is not reduced to a point because it contains points from $\N_1$ and $\N_2$.
Our description of $\N^T$ (including Section \ref{2.3}) shows that $\dim(\N^T) \le 1$.
Thus, $\Lambda$ is a connected component of $\N^T$ of dimension $1$.
By \cite[Theorem 4.2]{carrell}, $\Lambda$ is smooth.
There is a morphism $\PP^1 \to \Lambda$ given by $(b : c) \mapsto [\lambda(b : c)]$,
hence $\Lambda$ is isomorphic to $\PP^1$, and hence $A$ is an affine line.
(It can be shown that the map $\PP^1 \to \Lambda$
is injective, hence it is an isomorphism, but we will not need these fact.)

Clearly, $\Lambda$ is fixed by the transposition $(Y, Z)$, so we get three projective lines
{\mathversion{bold} $\Lambda_{\sigma}$, $\sigma \in \A_3$,} of $T$-fixed points in $\N$.

Assume now that $a \neq 0$, $c=-b$ and $d=0$. We may write
\[
\f = \left[
\ba{cccc}
Y & X & 0 & 0 \\
Z & 0 & X & 0 \\
0 & Y+bZ & -bY & X
\ea
\right]. \quad \text{Denote} \quad \psi_e = \left[
\ba{cccc}
Y & X & 0 & 0 \\
Z & 0 & X & 0 \\
0 & eY+bZ & -bY & X
\ea
\right]
\]
for $e \in \CC^*$.
If $b=0$, then $[\f] = [\nu]$.
If $b \neq 0$, then, as before, ${\displaystyle \f \sim \lim_{e \to 0} \psi_e}$, hence $[\f] = [\lambda(1 : -1)]$.
Assume that $a=0$, $b=0$, $c=0$ and $d \neq 0$. Then $[\f] = [\nu_{(Y,Z)}]$.
Assume, finally, that $a=0$, $c=-b \neq 0$ and $d \neq 0$. Then, as before,
$[\f] = [\lambda(1:-1)]$.

\subsection{Fixed points in $\N_2$}      
\label{2.3}

Consider $\f \in W$. Assume that $\gcd(\zeta_1, \zeta_2, \zeta_3, \zeta_4) =q$ for some $q \in \Sym^2 V^*$.
According to \cite[Proposition 3.3.2]{illinois}, $\f$ is semi-stable if and only if $\f$ is equivalent to a morphism of the form
\[
\left[
\ba{cccc}
Y & X & 0 & l_1 \\
Z & 0 & X & l_2 \\
0 & Z & -Y & l_3
\ea
\right].
\]
Moreover, $U_{\f} = \spann \{ qX, qY, qZ \}$ and $q$ is a monomial.
Assume that $q = X^2$. Then $l_3 \neq 0 \mod (Y,Z)$ so, without loss of generality,
we may assume that
\[
\f = \left[
\ba{cccc}
Y & X & 0 & a_1X + b_1Y + c_1Z \\
Z & 0 & X & a_2X + b_2Y + c_2Z \\
0 & Z & -Y & X
\ea
\right].
\]
Since $\zeta_1$ is divisible by $X^2$ we deduce that $Y(b_2Y + c_2Z) - Z(b_1Y + c_1Z) = 0$, hence
\[
\f \sim \left[
\ba{cccc}
Y & X & 0 & a_1 X \\
Z & 0 & X & a_2 X \\
0 & Z & -Y & X
\ea
\right] \sim \left[
\ba{cccc}
Y & X & 0 & 0 \\
Z & 0 & X & 0 \\
0 & Z & -Y & X + a_2 Y - a_1 Z
\ea
\right].
\]
Since $q=X(X + a_2 Y - a_1 Z)$, we deduce that $a_1 = 0$, $a_2 = 0$
and we obtain the $T$-fixed point $[\lambda(1 : -1)]$ from Section \ref{2.2.7}.
Analogously, when $q=Y^2$ or $Z^2$, we get the $T$-fixed points $[\lambda(1 : -1)_{\sigma}]$
for $\sigma = (X, Y)$, respectively, $(X, Z)$.

Assume now that $q=XY$. Clearly, $l_1 = 0 \mod (X,Y)$, hence we may write
\[
\f = \left[
\ba{cccc}
Y & X & 0 & 0 \\
Z & 0 & X & l_2 \\
0 & Z & -Y & l_3
\ea
\right].
\]
Since $X$ divides $X l_3 + Y l_2$, we see that $X$ divides $l_2$.
Since $Y$ divides $X l_3 + Y l_2$, we see that $Y$ divides $l_3$.
Thus, $\f$ is equivalent to the matrix
\[
{\mathversion{bold}
\boldsymbol{\xi} = \left[
\ba{cccc}
Y & X & 0 & 0 \\
Z & 0 & X & 0 \\
0 & Z & -Y & Y
\ea
\right]}
\]
giving a $T$-fixed point in $\N$.
We get three new isolated $T$-fixed points {\mathversion{bold} $[\xi_{\sigma}]$, $\sigma \in \A_3$.}


\section{Proof of Theorem 1} 
\label{section_3}

In Section \ref{section_2} we found that the isolated points of $\N^T$ are
$\alpha$, $\beta$, $\gamma$, $\delta$, $\varepsilon$, $\zeta$, $\eta$, $\theta$, $\iota$, $\kappa$,
$\mu$, $\nu$, $\xi$ and the points obtained from these by permutations of variables.
Thus, $\N^T$ consists of $62$ isolated points and $3$ projective lines, namely $\Lambda$
and two other lines obtained by permutations of variables.
In Section \ref{3.3} below we will examine the action of $T$ on the tangent spaces at the $T$-fixed points.
Before that, we will find the initial moduli spaces of height zero and dimension $12$.

\subsection{Diophantine equations}
\label{3.1}

The condition that the Kronecker moduli space $\N(3r, m, n)$ have dimension $12$ is equivalent to the equation
\[
\tag{3.1.1}
\label{3.1.1}
3rmn- m^2 - n^2 =11, \qquad r, m, n \in \NN.
\]
We fix $r$ and solve for $(m, n)$.
The set of solutions is preserved under the operations
\[
R, S \colon \ZZ^2 \to \ZZ^2, \qquad R(m,n) = (n, 3rn - m), \quad S(m,n) = (n,m).
\]
We say that two solutions $(m,n)$ and $(m',n')$ are equivalent
if one of them can be obtained from the other by applying finitely many times
the operations $R$ and $S$.
Let $(m,n)$ be a solution that is smallest in its equivalence class,
in the sense that $n \le n'$, $n \le m'$ for all $(m',n') \sim (m,n)$ and
$m \le m'$ for all $(m',n) \sim (m,n)$.
Thus, $n \le m \le 3rn/2$. Indeed, if $3rn/2 < m$, then
$(m', n) = (3rn-m, n)$ would be an equivalent solution for which $m' < m$,
contradicting the choice of $(m,n)$. Checking the cases when $m=n$ or $m=3rn/2$
yields no solutions, so we may assume that $n < m < 3rn/2$. Thus,
\[
11 > 3rmn - 2 m^2 = (3rn-2m) m \quad \text{forcing} \quad 2 \le m \le 10.
\]
Checking the cases when $2m/(3r) < n < m \le 10$ yields the solution $(r,m,n) = (1,4,3)$.
We conclude that the solutions to equation \ref{3.1.1} are of the form $(1,m,n)$, where $(m,n) \sim (4,3)$.
As a consequence, the moduli spaces of height zero and dimension $12$ are isomorphic to $\N(3, 4, 3)$.
Note that the exceptional bundle $E$ is a line bundle.

Analogously, the solutions to the diophantine equation
\[
3rmn- m^2 - n^2 =10, \qquad r, m, n \in \NN,
\]
are of the form $(4, m, n)$, where $(m, n) \sim (1, 1)$.
It follows that the moduli spaces of height zero and dimension $11$
are associated to exceptional bundles of rank $4$.
According to \cite{rudakov}, there are no exceptional bundles of rank $4$ on $\PP^2$.
We conclude that there are no moduli spaces of height zero and dimension $11$.

\subsection{Initial moduli spaces}
\label{3.2}

Let us determine the initial moduli spaces $\M$ of height zero and dimension $d=12$.
We saw in Section \ref{3.1} that the exceptional bundle $E$ is a line bundle.
Without loss of generality, we may assume that $E = \O_{\PP^2}$.
We first consider the case when $\mu \le \mu(E)$, the case when $\mu \ge \mu(E)$ being dual.
The condition that $\M$ be initial implies the inequalities
\[
\mu(E) - \frac{1}{3 \rank(E)^2} < \mu \le \mu(E), \quad \text{that is}, \quad - \frac{1}{3} < \mu \le 0.
\]
By \cite[Proposition 30]{drezet_reine}, we have the inequalities
\[
\rank(E) \sqrt{d-1} \le r < 3\rank(E)^2 \sqrt{d-1},
\]
forcing $4 \le r \le 9$.
The condition that $\M$ have height zero is equivalent to the condition
\[
\Delta = \frac{1}{2} (\mu - \mu(E)) (\mu - \mu(E) + 3) + 1 - \Delta(E)
= \frac{1}{2} \mu (\mu + 3) + 1.
\]
The isomorphism $\M \isom \N(3, 4, 3)$ of \cite{drezet_reine} preserves stable points,
hence $\M$ contains stable points.
This leads to the formula $d-1 = r^2 (2\Delta -1)$ of \cite[Corollary 14.5.4]{lepotier}.
Combining with the equation above we obtain the diophantine equation
\[
11 = c_1^2 + 3 r c_1^{} + r^2.
\]
Verifying this equation for $r = 4, \ldots, 9$ and $c_1 = [- r/3] + 1, \ldots, 0$
yields the solutions $(r, c_1) = (5, -1)$ and $(7, -2)$.
We obtain the moduli spaces $\M(5, -1, 4)$ and $\M(7, -2, 6)$.

By duality, if $\mu \ge \mu(E)$, we obtain the initial moduli spaces $\M(5, 1, 4)$ and $\M(7, 2, 6)$.

\subsection{Torus representation of the tangent spaces}
\label{3.3}

Let $\f \in W^\ss$ give a torus fixed point in $\N(3,4,3)$.
Assume that there are morphisms of groups
\[
u \colon (\CC^*)^3 \lra \GL(4, \CC), \qquad v \colon (\CC^*)^3 \lra \GL(3,\CC),
\]
\[
u = \left[
\ba{cccc}
u_1 & 0 & 0 & 0 \\
0 & u_2 & 0 & 0 \\
0 & 0 & u_3 & 0 \\
0 & 0 & 0 & u_4
\ea
\right], \qquad v = \left[
\ba{ccc}
v_1 & 0 & 0 \\
0 & v_2 & 0 \\
0 & 0 & v_3
\ea
\right],
\]
such that $t \f = v(t) \f u(t)$ for all $t \in (\CC^*)^3$.
In Table 1 below we give such morphisms $u$ and $v$ for all fixed points $\f$ found in Section \ref{section_2}.
According to \cite[Formula (6.1.1)]{choi_maican}, the action of $(\CC^*)^3$ on $\T_{[\f]} \N$, denoted by $\star$,
is given by
\[
\tag{3.3.1}
\label{3.3.1}
t \star [w] = [ v(t)^{-1} (t w) u(t)^{-1}]
\]
and is induced by an action on $W$ given by the same formula.
According to \cite[Formula (6.1.3)]{choi_maican}, the induced action on $\T_{\f} (G \f)$ is given by
\[
\tag{3.3.2}
\label{3.3.2}
t \star (A, B) = (u(t) A \, u(t)^{-1}, v(t)^{-1} B \, v(t)).
\]
Here we identify $\T_{\f} (G \f)$ with the tangent space of $G$ at its neutral element
\[
\T_e G = (\End(4\O(-2)) \oplus \End(3\O(-1)))/\CC
\]
whose vectors are represented by pairs $(A, B)$ of matrices.
Both actions factor through $T$.
The list of weights for the action of $T$ on $\T_{[\f]} \N(3,4,3)$, denoted by $\chi^*[\f]$,
is obtained by subtracting the list of weights for the action of $T$ on $\T_{\f} (G \f)$,
obtained by means of (\ref{3.3.2}), from the list of weights
for the action of $T$ on $\T_{\f} W$, obtained using (\ref{3.3.1}):
\begin{multline*}
\chi^*[\f] = \{  v_j^{-1} u_i^{-1} t_k^{} \mid i=1,2,3,4,\ j = 1,2,3,\ k = 0,1,2 \} \\
\setminus
\big( \{ u_i^{} u_j^{-1} \mid i,j = 1,2,3,4 \} \cup \{ v_i^{-1} v_j^{} \mid i,j = 1,2,3 \} \setminus \{ \chi_0 \} \big).
\end{multline*}
Here $\chi_0$ is the trivial character of $T$.
It is convenient to use additive notation when dealing with characters.
We replace the character $t_0^i t_1^j t_2^k$ with the expression $ix+jy+kz$, where $i, j, k \in \ZZ$.
In Table 2 below we give, in additive notation, the list of weights for each fixed point from Section \ref{section_2}.
These lists are obtained with the aid of the {\sc Singular} \cite{singular} program from Appendix \ref{appendix_A}.
The points $\lambda (b : c)$ from Section \ref{2.2.7} are ignored because
the $T$-representation of the tangent space at a point is unchanged if the point varies in
a connected component of $\N^T$,
so it is enough to examine only one point on $\Lambda$, say $[\lambda]$.

\begin{table}[!hpt]{Table 1}
\begin{center}
\begin{tabular}{| c | l | l |}
\hline
Fixed point & $(v_1, v_2, v_3)$ & $(u_1, u_2, u_3, u_4)$ \\
\hline \hline
$\alpha$ & $(z, x, y)$ & $(0, x-z, y-x, z-y)$ \\
\hline
$\beta$ & $(y, x, x)$ & $(0, x-y, y-x, z-x)$ \\
\hline
$\gamma$ & $(y, x, 2y-x)$ & $(0, x-y, y-x, x-2y+z)$ \\
\hline
$\delta$ & $(y, 2y-z, 2y-x)$ & $(0, z-y, x-2y+z, 2x-2y)$ \\
\hline
$\varepsilon$ & $(x, z, -x+y+z)$ & $(0, y-x, x-z, 2x-y-z)$ \\
\hline
$\zeta$ & $(y, x, -x+y+z)$ & $(0, x-y, y-x, 2x-y-z)$ \\
\hline
$\eta$ & $(y,x,2x-y)$ & $(0, x-y, y-x, 2y-2x)$ \\
\hline
$\theta$ & $(x, x, x-y+z)$ & $(0, y-x, z-x, y-z)$ \\
\hline
$\iota$ & $(y, x+y-z, x+2y-2z)$ & $(0, z-y, -x-y+2z, -2y+2z)$ \\
\hline
$\kappa$ & $(x, x-y+z, x-2y+2z)$ & $(0, y-x, -x+2y-z, 2y-2z)$ \\
\hline
$\lambda$ & $(y, -x+y+z, z)$ & $(0, x-y, 2x-y-z, x-z)$ \\
\hline
$\mu$ & $(x, z, y)$ & $(y-x, 0, x-z, x-y)$ \\
\hline
$\nu$ & $(x, 2x-y, 2x-2y+z)$ & $(0, y-x, 2y-2x, -x+2y-z)$ \\
\hline
$\xi$ & $(y, z, -x+y+z)$ & $(0, x-y, x-z, x-z)$ \\
\hline
\end{tabular}
\end{center}
\end{table}

\begin{table}[!hpt]{Table 2}
\begin{center}
\begin{tabular}{| c | l |}
\hline
\begin{tabular}{l} Fixed \\ point $\f$ \end{tabular} & \begin{tabular}{l} $\chi^*[\f]$ \end{tabular} \\
\hline \hline
$\alpha$
&
\begin{tabular}{l}
$2y-2z, -x+z, -2x+2z, y-z, -x+y, x-y$, \\
$-y+z, -x+z, 2x-2y, x-y, x-z, y-z$
\end{tabular} \\
\hline
$\beta$
&
\begin{tabular}{l}
$-x+z, -x+z, -x+y, -2x+2y, -2x+y+z$, \\
$x-2y+z, -y+z, -y+z, x-y, x-z, x-z, y-z$
\end{tabular} \\
\hline
$\gamma$
&
\begin{tabular}{l}
$-x+z, -x+z, -2x+y+z, -y+z, x-2y+z, x-y$, \\
$-y+z, 3x-3y, 2x-2y, -x+2y-z, x-z, y-z$
\end{tabular} \\
\hline
$\delta$
&
\begin{tabular}{l}
$-2y+2z, x-2y+z, x-y, -y+z, x-y, x-z$, \\
$y-z, -x+y, -2x+y+z, -2x+2z, -x+y, -x+z$
\end{tabular} \\
\hline
$\varepsilon$
&
\begin{tabular}{l}
$y-z, -y+z, x-z, 2x-2y, -2x+2z, -x+z$, \\
$x-y, -y+z, -3x+y+2z, -2x+2y, -x+y, -x+z$
\end{tabular} \\
\hline
$\zeta$
&
\begin{tabular}{l}
$-y+z, y-z, x-2y+z, x-y, -y+z, 2x-2y, -x+z$, \\
$-2x+2z, -2x+y+z, -3x+y+2z, -x+y, -x+z$
\end{tabular} \\
\hline
$\eta$
&
\begin{tabular}{l}
$-y+z, -x+z, -2x+y+z, -x+z, -2x+y+z, -3x+2y+z$, \\
$x-2y+z, -y+z, -x+z, 2x-3y+z, x-2y+z, -y+z$
\end{tabular} \\
\hline
$\theta$
&
\begin{tabular}{l}
$-x+y, -x+2y-z, -x+y, x-y, x-z, 2y-2z$, \\
$y-z, -x+z, -x+z, -x-y+2z, -x+y, -x+z$
\end{tabular} \\
\hline
$\iota$
&
\begin{tabular}{l}
$-x-2y+3z, -x+z, -x-y+2z, 2x-2z, x-z, x-y$, \\
$-y+z, y-z, -x+2y-z, -x+y, -x+y, -x+z$
\end{tabular} \\
\hline
$\kappa$
&
\begin{tabular}{l}
$-x+y, -x+3y-2z, -x+2y-z, x-y, x-z, y-z$, \\
$-x-2y+3z, -y+z, -x+z, -x-y+2z, -x+y, -x+z$
\end{tabular} \\
\hline
$\lambda$
&
\begin{tabular}{l}
$y-z, -x+2y-z, -2x+2z, -x+y, -2x+2y, -2x+y+z$, \\
$-x+z, -x-y+2z, -y+z, 0, -x+y, -x+z$
\end{tabular} \\
\hline
$\mu$
&
\begin{tabular}{l}
$x-2y+z, x-y, -2x+2z, -x+z, -y+z, -x+z$, \\
$-x-y+2z, -2x+y+z, -x+2y-z, -x+y, -x+y, -x+z$
\end{tabular} \\
\hline
$\nu$
&
\begin{tabular}{l}
$-2x+y+z, -2x+3y-z, -2x+2y, -x+z, -x+y, -y+z$, \\
$-2y+2z, -y+z, -x+z, -x-y+2z, -x+y, -x+z$
\end{tabular} \\
\hline
$\xi$
&
\begin{tabular}{l}
$2x-y-z, -x+2y-z, -x-y+2z, -x+y, x-y, -y+z$, \\
$-x+z, -x-y+2z, -x+y, -x+z, x-y, -y+z$
\end{tabular} \\
\hline
\end{tabular}
\end{center}
\end{table}

Let $\lambda(\tau) = (\tau^{n_0}, \tau^{n_1}, \tau^{n_2})$ be a one-parameter subgroup of $T$
that is not orthogonal to any non-zero character appearing in Table 2.
The set of weights from Table 2 is contained on the set
\[
\{ ix + jy + kz \mid -3 \le i, j, k \le 3 \},
\]
so we can choose $\lambda(\tau) = (1, \tau, \tau^4)$.
For each $T$-fixed point $[\f]$ in $\N$ denote by $p[\f]$ the number of characters $\chi \in \chi^*[\f]$
satisfying the condition $\langle \lambda, \chi \rangle > 0$.
Using the procedure ``positive-parts'' from Appendix \ref{appendix_A}
we compute the list of numbers $p[\f_{\sigma}]$, $\sigma \in \Sym_3$.
The results are written Table 3 below.

\begin{table}[!hpt]{Table 3}
\begin{center}
\begin{tabular}{| c | l | c | c | l |}
\cline{1-2} \cline{4-5}
Fixed point $\f$ & $p[\f_{\sigma}]$, $\sigma \in \Sym_3$
& &
Fixed point $\f$ & $p[\f_{\sigma}]$, $\sigma \in \Sym_3$
\\
\cline{1-2} \cline{4-5}
$\alpha$ & $5, 7$
& &
$\beta$ & $8, 6, 6, 7, 5, 6$
\\
\cline{1-2} \cline{4-5}
$\gamma$ & $6, 4, 7, 9, 6, 5$
& &
$\delta$ & $8, 5, 6$
\\
\cline{1-2} \cline{4-5}
$\varepsilon$ & $8, 5, 5, 8, 4, 7$
& &
$\zeta$ & $9, 4, 5, 10, 4, 7$
\\
\cline{1-2} \cline{4-5}
$\eta$ & $12, 3, 6$
& &
$\theta$ & $7, 7, 2, 5, 4, 9$
\\
\cline{1-2} \cline{4-5}
$\iota$ & $7, 6, 4, 6, 4, 6$
& &
$\kappa$ & $7, 6, 3$
\\
\cline{1-2} \cline{4-5}
$\lambda$ & $9, 6, 1$
& &
$\mu$ & $10, 4, 3, 9, 2, 8$
\\
\cline{1-2} \cline{4-5}
$\nu$ & $11, 5, 3, 8, 0, 8$
& &
$\xi$ & $8, 3, 5$
\\
\cline{1-2} \cline{4-5}
\end{tabular}
\end{center}
\end{table}

We quote below the Homology Basis Formula \cite[Theorem 4.4]{carrell}.
Let $X_1, \ldots, X_m$ denote the irreducible components of $\N^T$.
Denote $p(i) = p(X_i) = p [\f]$ for some (or, in fact, any) point $[\f] \in X_i$.
Then for $0 \le n \le 2 \dim (\N)$ we have the isomorphism
\[
\tag{3.3.3}
\label{3.3.3}
\H_n (\N, \ZZ) \isom \bigoplus_{1 \le i \le m} \H_{n-2 p(i)} (X_i, \ZZ).
\]
Let $\Pi$ denote the set of isolated $T$-fixed points in $\N$.
From (\ref{3.3.3}) we get the formula
\[
P_{\N}(x) = \sum_{[\f] \in \Pi} x^{2 p[\f]} + \sum_{\sigma \in \A_3} (x^2 + 1) x^{2 p(\Lambda_{\sigma})}.
\]
Substituting the values of $p[\f]$ from Table 3 yields the expression for the Poincar\'e polynomial
from Theorem \ref{theorem_1}.

According to \cite[Section 4.2.8]{carrell}, Formula (\ref{3.3.3}) respects the Hodge decomposition, that is,
for $0 \le p, q \le \dim(\N)$, we have the isomorphism
\[
\tag{3.3.4}
\label{3.3.4}
\H^p (\N, \Omega_{\N}^q) \isom \bigoplus_{1 \le i \le m} \H^{p - p(i)} \big(X_i, \Omega_{X_i}^{q - p(i)}\big).
\]
This shows that $h^{pq}(\N) = 0$ for $p \neq q$, because the same is true of the Hodge numbers of all $X_i$
(which are points or projective lines).


\section{The torus fixed locus of $\M_{\PP^2}(5,1)$}
\label{section_4}

For the convenience of the reader we recall from \cite{illinois} the classification of semi-stable sheaves
on $\PP^2$ having Hilbert polynomial $5m+1$.
In $\M_{\PP^2}(5,1)$ we have four smooth strata $\M_0$, $\M_1$, $\M_2$, $\M_3$.
The stratum $\M_0$ is open and consists of sheaves having a presentation of the form
\[
0 \lra 4\O(-2) \stackrel{\f}{\lra} 3\O(-1) \oplus \O \lra \F \lra 0,
\]
where $\f_{11}$ is semi-stable as a Kronecker module.
We denote by $\M_{01} \subset \M_0$ the locally closed subset
given by the condition that the greatest common divisor of the maximal minors of $\f_{11}$,
denoted $\zeta_1$, $\zeta_2$, $\zeta_3$, $\zeta_4$, is a linear form.
Likewise, $\M_{02}$ is the locally closed subset given by the
condition that $\zeta_1$, $\zeta_2$, $\zeta_3$, $\zeta_4$ have a common factor of degree $2$.
The complement $\M_0 \setminus (\M_{01} \cup \M_{02})$ is given by the condition that
$\zeta_1$, $\zeta_2$, $\zeta_3$, $\zeta_4$ have no common factor.
In this case the zero-set of $\zeta_1$, $\zeta_2$, $\zeta_3$, $\zeta_4$
is a zero-dimensional subscheme $\Z \subset \PP^2$ of length $6$ that is not contained in a conic curve.
Thus, $\M_0 \setminus (\M_{01} \cup \M_{02})$ consists of all sheaves of the form
$\O_Q(-\Z)(2)^\D$, where $Q \subset \PP^2$ is a quintic curve, $\Z$ is a subscheme of $Q$ as above
and $\O_Q(-\Z) \subset \O_Q$ is its ideal sheaf. Here $Q$ is defined by the equation $\det(\f) = 0$.
For a one-dimensional sheaf $\F$ on $\PP^2$
we use the notation $\F^\D$ to denote the dual sheaf ${\mathcal Ext}^1(\F, \omega_{\PP^2})$.

The stratum $\M_1$ is locally closed and has codimension $2$.
It consists of those sheaves given by exact sequences of the form
\[
0 \lra \O(-3) \oplus \O(-2) \stackrel{\f}{\lra} 2\O \lra \F \lra 0,
\]
where $\f_{12}$ and $\f_{22}$ are linearly independent.
Let $\M_{10} \subset \M_1$ be the open subset (in the relative topology) given by the condition that $\f_{12}$ and $\f_{22}$
have no common factor. Clearly, $\M_{10}$ consists of sheaves of the form $\O_Q(-\X)(2)$,
where $Q \subset \PP^2$ is a quintic curve,
$\X$ is the intersection of two conic curves without common component, $\X$ is contained in $Q$ and $\O_Q(-\X) \subset \O_Q$
is its ideal sheaf. The complement $\M_{11} = \M_1 \setminus \M_{10}$ is given by the condition that
$\f_{12} = l l_1$, $\f_{22} = l l_2$ for some linear forms $l$, $l_1$, $l_2$.

The stratum $\M_2$ is locally closed of codimension $3$. The points $[\F]$ in $\M_2$ are given by
exact sequences of the form
\[
0 \lra \O(-3) \oplus \O(-2) \oplus \O(-1) \stackrel{\f}{\lra} \O(-1) \oplus 2\O \lra \F \lra 0,
\]
where $\f_{23}$ has linearly independent entries,
$\f_{13} = 0$, $\f_{12} \neq 0$, and $\f_{11}$ is not divisible by $\f_{12}$.

The deepest stratum $\M_3$ is closed of codimension $5$ and is isomorphic to the universal quintic curve.
The sheaves $\F$ giving points in $\M_3$ are cokernels of the form
\[
0 \lra 2\O(-3) \stackrel{\f}{\lra} \O(-2) \oplus \O(1) \lra \F \lra 0,
\]
where $\f_{11}$ and $\f_{12}$ are linearly independent. Equivalently, these are the sheaves of the form
$\O_Q(-P)(1)^\D$, where $P$ is a closed point on a quintic curve $Q$.

Let $W_i$ be the set of morphisms $\f$ as above such that $\Coker(\f)$ gives a point in $\M_i$.
The ambient vector space $\WW_i$ of morphisms of sheaves is acted upon by the group
of automorphisms $G_i$.
Thus
\[
\WW_1 = \Hom(4\O(-2), 3\O(-1) \oplus \O),
\]
\[
G_1 = (\Aut(4\O(-2)) \times \Aut(3\O(-1) \oplus \O))/\CC^*
\]
etc.
Clearly, $W_i$ is $G_i$-invariant.
As shown in \cite{illinois}, the canonical maps $W_i \to \M_i$ are geometric quotient maps.
In particular, the strata $\M_i$ are smooth.


\subsection{Fixed points in $\M_0$}
\label{4.1}

Given a morphism $\psi \colon 4\O(-2) \to 3\O(-1)$ and a monomial $d$ of degree $5$
belonging to the ideal generated by the maximal minors of $\psi$,
we denote by $\M(\psi, d)$ the image in $\M_{\PP^2}(5,1)$
of the set of morphisms $\f \in W_0$ for which $\f_{11} = \psi$ and $\det(\f) = d$.
If $[\Coker(\f)]$ is a $T$-fixed point in $\M_0$, then, obviously, $\f_{11}$ gives a $T$-fixed point in $\N(3, 4, 3)$.
The torus-fixed points in $\N(3, 4, 3)$ have been classified in Section \ref{section_2}.

Consider first the torus action on $\M_0 \setminus (\M_{01} \cup \M_{02})$.
Clearly, $\O_Q(-\Z)(2)^\D$ gives a $T$-fixed point precisely if $Q$ and $\Z$ are $T$-invariant.
Up to a permutation of variables, there are seven schemes $\Z$ corresponding to the
matrices $\alpha$, $\beta$, $\gamma$, $\delta$, $\varepsilon$, $\zeta$, and $\eta$ from Section \ref{section_2}.
For each fixed $\Z$ there are twelve invariant quintics $Q$ containing it.
In the sequel, we will concentrate on finding the fixed points in $\M_{01} \cup \M_{02}$.
Thus, $\f_{11}$ is equivalent to one of the matrices
$\theta$, $\iota$, $\kappa$, $\lambda(a : b)$, $\mu$, $\nu$, and $\xi$ from Section \ref{section_2}.
Among these, $\lambda(1 :  -1)$ and $\xi$ correspond to points in $\M_{02}$,
while the other matrices correspond to points in $\M_{01}$.
We will examine only the case when $\f_{11}$ is equivalent to $\theta$, the other cases being analogous.
Write $d = \det(\f) = X^i Y^j Z^k$ and fix quadratic forms $q_1, q_2, q_3, q_4$ satisfying the equation
\[
\left[
\ba{cccc}
q_1 & q_2 & q_3 & q_4
\ea
\right] \Theta = d, \quad \text{where} \quad \Theta = \left[
\ba{cccc}
-XYZ & X^2 Z & X^2 Y & -XZ^2
\ea
\right]^\TR
\]
is the column vector of maximal minors of $\theta$.
The set $\M(\theta, d)$ is parametrised by morphisms of the form
\[
\left[
\ba{cccc}
X & Y & 0 & 0 \\
X & 0 & Z & 0 \\
0 & Z & 0 & X \\
q_1' & q_2' & q_3' & q_4'
\ea
\right],
\]
where
\[
\left[
\ba{cccc}
q_1' & q_2' & q_3' & q_4'
\ea
\right] \Theta = d.
\]
From the relation
\[
\left[
\ba{cccc}
q_1' - q_1 & q_2' - q_2 & q_3' - q_3 & q_4' - q_4
\ea
\right] \Theta = 0
\]
we deduce that the row matrix above is a linear combination (with polynomial coefficients)
of the rows of the matrix
\[
\left[
\ba{cccc}
X & Y & 0 & 0 \\
X & 0 & Z & 0 \\
0 & Z & 0 & X \\
Z & 0 & 0 & -Y
\ea
\right]
\]
obtained by adjoining a row to $\theta$.
It follows that $\M(\theta, d)$ is parametrised by the set $A$ of morphisms of the form
\[
\f(u) = \left[
\ba{cccc}
X & Y & 0 & 0 \\
X & 0 & Z & 0 \\
0 & Z & 0 & X \\
q_1 + uZ & q_2 & q_3 & q_4 - uY
\ea
\right], \quad u = aY + bZ, \quad a, b \in \CC.
\]
The canonical map $A \to \M(\psi, d)$ is an isomorphism, which can be seen using an argument
similar to that of \cite[Proposition 5.1]{choi_maican}.
The induced action of $T$ on $A$ is given by $(t, u) \mapsto t_0^{1-i} t_1^{1-j} t_2^{2-k} (tu)$.
Choosing coordinates $(a, b)$ we identify $A$ with $\AA^2$.
The induced action of $T$ on $\AA^2$ is given by
\[
t (a, b) = (t_0^{1-i} t_1^{2-j} t_2^{2-k} a, \ t_0^{1-i} t_1^{1-j} t_2^{3-k} b).
\]
We get an isolated fixed point, namely $(0, 0)$, unless $(i, j, k) = (1, 2, 2)$ or $(1, 1, 3)$,
in which case we get an affine line of fixed points.
Summarising, we obtain the following proposition.

\begin{proposition}
\label{4.1.1}
Assume that $\psi$ gives a $T$-fixed point in $\N(3, 4, 3)$.
Let $\zeta_1$, $\zeta_2$, $\zeta_3$, $\zeta_4$ be its maximal minors.
Then, for any monomial $d$ of degree $5$ belonging to the ideal $(\zeta_1, \zeta_2, \zeta_3, \zeta_4)$,
the set of fixed points for the action of $T$ on $\M(\psi, d)$ has precisely one irreducible component,
which is either a point or an affine line.
We have a line in the following cases:
\begin{center}
\begin{tabular}{lr}
$\psi$ & $d$ \\
\hline
$\theta$ & $XYZ^3$, $XY^2 Z^2$ \\
$\iota$ & $XY^3 Z$, $XY^2 Z^2$ \\
$\kappa$ & $XY^2 Z^2$ \\
$\lambda(a : b)$ & $X^2 Y^2 Z$, $X^2 YZ^2$ \\
$\mu$ & $XY^3 Z$, $XY^2 Z^2$ \\
$\nu$ & $X^2 Y^2 Z$, $X^2 YZ^2$ \\
$\xi$ & $X^3 YZ$, $XY^3 Z$, $XYZ^3$, $XY^2 Z^2$, $X^2 YZ^2$
\end{tabular}
\end{center}
\end{proposition}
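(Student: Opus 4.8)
The plan is to run, for every $T$-fixed point $\psi$ of $\N(3,4,3)$ classified in Section \ref{section_2}, the computation just performed for $\psi=\theta$; by $\Sym_3$-equivariance it suffices to treat the representatives $\alpha,\beta,\gamma,\delta,\varepsilon,\zeta,\eta,\theta,\iota,\kappa,\lambda(a:b),\mu,\nu,\xi$.

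The points $\psi\in\{\alpha,\dots,\eta\}$, which lie in $\N_0$, are immediate: the common zero-scheme $\Z$ of $\zeta_1,\zeta_2,\zeta_3,\zeta_4$ is a $T$-invariant subscheme of length $6$ not contained in a conic, and every $\f\in W_0$ with $\f_{11}=\psi$ and $\det(\f)=d$ has cokernel $\O_Q(-\Z)(2)^\D$, where $Q\subset\PP^2$ is the quintic $\{d=0\}$; hence $\M(\psi,d)$ is a single reduced point, which is its own $T$-fixed locus, and none of these $\psi$ appears in the table.

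For each of the remaining $\psi$ I would imitate the argument displayed above. Writing $\Theta$ for the column of maximal minors of $\psi$, fix a row $q=(q_1,\dots,q_4)$ of quadratic forms with $q\,\Theta=d$, which exists because $d\in(\zeta_1,\dots,\zeta_4)$; then, exactly as for $\theta$ and as justified by the argument of \cite[Proposition 5.1]{choi_maican}, the canonical map identifies $\M(\psi,d)$ with an affine space $A$, whose points are the morphisms with bottom row $q+s$, $s$ running over the degree-$2$ part of the module of syzygies of $\zeta_1,\dots,\zeta_4$ modulo those syzygies coming from the rows of $\psi$. Because $d$ is a monomial, the $T$-action transported to $A\isom\AA^N$ is linear and diagonal in suitable coordinates, and, writing characters additively as in Section \ref{3.3}, its weights are of the form $m-d$ with $m$ running over a set $M_\psi$ of $N$ degree-$5$ monomials that depends only on $\psi$; for $\psi=\theta$ one reads off $M_\theta=\{XY^2Z^2,\,XYZ^3\}$ from the exponent vectors $(1,2,2)$ and $(1,1,3)$ appearing above. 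The $T$-fixed locus of $A$ is then the coordinate subspace spanned by the directions with $m=d$.

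Being a linear subspace of an affine space, this fixed locus is automatically irreducible, so the proposition reduces to two bookkeeping facts about $M_\psi$: its elements are pairwise distinct — so at most one weight vanishes, forcing the fixed locus to be a single point or an affine line and nothing larger — and the values of $d$ making exactly one $m\in M_\psi$ equal $d$ are precisely those in the table. The work that will absorb most of the effort is exhibiting $A$, hence $M_\psi$, in closed form: for each $\psi$ one must compute the degree-$2$ syzygies of $\zeta_1,\zeta_2,\zeta_3,\zeta_4$ and discard the trivial ones. For $\theta,\iota,\kappa,\lambda(a:b),\mu,\nu$ the minors share a common linear factor, and after cancelling it one is computing syzygies of four quadrics with a small common zero locus, which gives $\#M_\psi\le 2$; for $\xi$ the minors share a common quadratic factor, carried along through the same computation, giving $\#M_\xi=5$ and the five listed monomials. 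The member $\lambda(1:-1)$ of the family also lies in $\N_2$, but there the requirement $d\in(\zeta_1,\dots,\zeta_4)$ eliminates all but the two monomials already recorded, and the analysis is otherwise uniform in $(a:b)\in\PP^1$.
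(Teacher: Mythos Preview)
Your approach is correct and is essentially the paper's own: the paper also carries out only the $\theta$ case in detail and then states the proposition as the summary of the analogous case-by-case computations. Your packaging of each case in terms of the monomial set $M_\psi$ and the observation that the fixed locus, being a coordinate subspace, is automatically irreducible, is a clean way to organise what is otherwise pure bookkeeping.

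One imprecision worth flagging: the analysis is \emph{not} literally uniform in $(a:b)\in\PP^1$. At $(a:b)=(1:-1)$ one has $\zeta_4=0$, the ideal $(\zeta_1,\dots,\zeta_4)$ drops to $(X^3,X^2Y,X^2Z)$, and the syzygy count jumps so that $\dim\M(\lambda(1:-1),d)=5$ rather than $2$; correspondingly $M_{\lambda(1:-1)}$ has five elements, namely $\{X^2Y^2Z,\,X^2YZ^2,\,XY^3Z,\,XY^2Z^2,\,XYZ^3\}$. What rescues your conclusion is exactly the mechanism you allude to: the three extra monomials are not divisible by $X^2$, hence lie outside the (now smaller) ideal $(\zeta_1,\dots,\zeta_4)$, so they are not feasible values of $d$ and only $X^2Y^2Z$, $X^2YZ^2$ survive. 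Once this is made explicit the argument goes through.
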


\noindent
Note that the torus fixed lines in $\M(\lambda(a : b), X^2 Y^2 Z)$ sweep a surface isomorphic to $\PP^1 \times \AA^1$.
Thus, $\M_0^T$ has six irreducible components of dimension $2$ obtained from the surface $\Sigma_0$ with parametrisation
\[
\lambda((a : b), c) = \left[
\ba{cccc}
Y & X & 0 & 0 \\
Z & 0 & X & 0 \\
0 & aZ & bY & X \\
0 & (1-c)YZ & c Y^2 & 0
\ea
\right], \qquad (a : b) \in \PP^1, \quad c \in \AA^1,
\]
by permutations of variables.


\subsection{Fixed points in $\M_1$}
\label{4.2}

Assume that the point in $\M_1$ represented by the morphism
\[
\f = \left[
\ba{cc}
f_1 & q_1 \\
f_2 & q_2
\ea
\right]
\]
is fixed by $T$. Then, since the fibres of the map $W_1 \to \M_1$ are the $G_1$-orbits,
we deduce that for each $t \in T$ there is $(g(t), h(t)) \in G_1$ such that $t \f = h(t) \f g(t)$.
By the argument at \cite[Section 2.1.1]{homology}, we deduce that $q_1$ and $q_2$ are distinct monomials of degree $2$.
Moreover, $d = \det(\f)$ is a monomial of degree $5$ that varies in the ideal $(q_1, q_2)$.
We denote by $\M(q_1, q_2, d)$ the image in $\M_{\PP^2}(5,1)$ of the set of morphisms $\f \in W_1$
for which $\f_{12} = q_1$, $\f_{22} = q_2$ and $\det(\f) = d = X^i Y^j Z^k$.
If $q_1$ and $q_2$ have no common factor,
then, $\M(q_1, q_2, d)$ consists of a single $T$-fixed point of the form $\O_Q(-\X)(2)$.
Assume next that $q_1 = l l_1$, $q_2 = l l_2$ for some linear forms $l, l_1, l_2 \in \{ X, Y, Z \}$.
Fix monomials $f_1$ and $f_2$ such that $f_1 q_2 - f_2 q_1 = d$.
It is easy to see that $\M(l l_1, l l_2, d)$ is parametrised by the set $A$ of morphisms of the form
\[
\left[
\ba{cc}
f_1 + q l_1 & l l_1 \\
f_2 + q l_2 & l l_2
\ea
\right]
\]
with $q$ a quadratic form in the two variables different from $l$.
Up to a permutation of variables, there are only two cases to be considered: $(q_1, q_2) = (X^2, XY)$ or $(XZ, YZ)$.
In the first case
\[
\f = \left[
\ba{cc}
f_1 + q X & X^2 \\
f_2 + q Y & XY
\ea
\right], \qquad q = a Y^2 + b Z^2 + c YZ.
\]
Choosing coordinates $(a, b, c)$, we identify $A$ with $\AA^3$.
We have
\[
t \f = \left[
\ba{cc}
t_0^{i-1} t_1^{j-1} t_2^k f_1^{} + (t q) t_0^{} X & t_0^2 X^2 \\
t_0^{i-2} t_1^j t_2^k f_2^{} + (t q) t_1^{} Y & t_0^{} t_1^{} XY
\ea
\right] \sim \left[
\ba{cc}
f_1 + t_0^{2-i} t_1^{1-j} t_2^{-k} (t q) X & X^2 \\
f_2 + t_0^{2-i} t_1^{1-j} t_2^{-k} (t q) Y & XY
\ea
\right].
\]
The induced action of $T$ on $\AA^3$ is given by
\[
t (a, b, c) = (t_0^{2-i} t_1^{3-j} t_2^{-k} a, \ t_0^{2-i} t_1^{1-j} t_2^{2-k} b, \ t_0^{2-i} t_1^{2-j} t_2^{1-k} c).
\]
We get an isolated fixed point unless $(i, j, k) = (2, 3, 0)$, or $(2, 1, 2)$, or $(2, 2, 1)$, in which case we get
an affine line of fixed points.
Assume now that
\[
\f = \left[
\ba{cc}
f_1 + q X & XZ \\
f_2 + q Y & YZ
\ea
\right], \qquad q = a X^2 + b Y^2 + c XY.
\]
We have
\[
t \f = \left[
\ba{cc}
t_0^i t_1^{j-1} t_2^{k-1} f_1^{} + (t q) t_0^{} X & t_0^{} t_2^{} XZ \\
t_0^{i-1} t_1^j t_2^{k-1} f_2^{} + (t q) t_1^{} Y & t_1^{} t_2^{} YZ
\ea
\right] \sim \left[
\ba{cc}
f_1^{} + t_0^{1-i} t_1^{1-j} t_2^{1-k} (t q) X & XZ \\
f_2^{} + t_0^{1-i} t_1^{1-j} t_2^{1-k} (t q) Y & YZ
\ea
\right].
\]
The induced action of $T$ on $A \isom \AA^3$ is given by
\[
t (a, b, c) = (t_0^{3-i} t_1^{1-j} t_2^{1-k} a, \ t_0^{1-i} t_1^{3-j} t_2^{1-k} b, \ t_0^{2-i} t_1^{2-j} t_2^{1-k} c).
\]
We get an isolated fixed point unless $(i, j, k) = (3,1,1)$, or $(1,3,1)$, or $(2,2,1)$, in which case
we get an affine line of fixed points. Summarising, we obtain the following proposition.

\begin{proposition}
\label{4.2.1}
Let $q_1$ and $q_2$ be distinct monomials of degree $2$. Let $d$ be a monomial of degree $5$
in the ideal $(q_1, q_2)$. Then the set of fixed points for the action of $T$ on $\M(q_1, q_2, d)$
has precisely one irreducible component, which is either a point or an affine line.
\end{proposition}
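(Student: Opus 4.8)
The plan is to bootstrap off the explicit description of $\M(q_1, q_2, d)$ built in Section \ref{4.2} and then read off the fixed locus of a diagonal linear torus action on an affine space.

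First I would dispose of the case $\gcd(q_1, q_2) = 1$: here Section \ref{4.2} already shows that $\M(q_1, q_2, d)$ is a single point, of the form $\O_Q(-\X)(2)$, which is $T$-invariant and hence $T$-fixed, so the claim is immediate. The substantive case is when $q_1$ and $q_2$ share a common linear factor $l$, say $q_1 = l l_1$, $q_2 = l l_2$ with $l, l_1, l_2 \in \{ X, Y, Z \}$ and $l_1 \neq l_2$. I would observe that, up to a permutation of the variables, only two configurations occur, namely $(q_1, q_2) = (X^2, XY)$ and $(q_1, q_2) = (XZ, YZ)$, according to whether $l$ coincides with one of $l_1$, $l_2$ or is distinct from both. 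For each of these, Section \ref{4.2} supplies an isomorphism $\M(q_1, q_2, d) \isom \AA^3$ under which the action of $T$ takes the diagonal linear form
\[
t \, (a, b, c) = (\chi_a(t) \, a, \ \chi_b(t) \, b, \ \chi_c(t) \, c),
\]
for three characters $\chi_a$, $\chi_b$, $\chi_c$ of $T$ whose exponents depend only on the exponents of $d$. The point to invoke is then elementary: the fixed locus of such an action is the linear coordinate subspace of $\AA^3$ on which every coordinate with non-trivial character vanishes; this subspace is automatically irreducible, and it is a single point or an affine line exactly when at most one of $\chi_a$, $\chi_b$, $\chi_c$ is the trivial character.

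What remains is to check that $\chi_a$, $\chi_b$, $\chi_c$ are pairwise distinct in each of the two configurations, since then no two of them can both be trivial. This I would verify by a one-line computation: writing the characters additively as in Section \ref{section_3}, their pairwise differences are, up to sign, $y - z$ and $2(y - z)$ in the case $(X^2, XY)$, and $x - y$ and $2(x - y)$ in the case $(XZ, YZ)$, all of which are non-zero.

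I do not expect a genuine obstacle: the technical work has already been carried out in Section \ref{4.2}, both in setting up the parametrisation and in computing the action, so the proposition is essentially a clean consequence of that output. The one place where I would take care is in confirming that the identification $\M(q_1, q_2, d) \isom \AA^3$ is an honest isomorphism of varieties, by the argument analogous to the one used for $\M_0$ (cf.\ \cite[Proposition 5.1]{choi_maican}), so that the words ``irreducible component'' and ``affine line'' in the statement are meant literally.
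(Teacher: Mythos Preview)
Your proposal is correct and follows essentially the same approach as the paper: both use the parametrisation of $\M(q_1, q_2, d)$ by $\AA^3$ and the explicit diagonal $T$-action computed in Section~\ref{4.2}, reducing to the two configurations $(X^2, XY)$ and $(XZ, YZ)$ up to permutation. The only difference is presentational: the paper simply enumerates the three values of $(i,j,k)$ for which each coordinate character becomes trivial and observes that these values are distinct, whereas you argue directly that the three characters are pairwise distinct by computing their differences --- a slightly cleaner way of reaching the same conclusion.
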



\subsection{Fixed points in $\M_2$}
\label{4.3}

Assume that the point in $\M_{\PP^2}(5, 1)$ represented by the morphism
\[
\f = \left[
\ba{ccc}
q & l & 0 \\
f_1 & q_1 & l_1 \\
f_2 & q_2 & l_2
\ea
\right]
\]
is fixed by $T$. Here $\f$ satisfies the conditions from the beginning of this section, namely:
$\det(\f) \neq 0$, $l \neq 0$, $l$ does not divide $q$, and $l_1$, $l_2$ are linearly independent.
For each $t \in T$, there is $(g(t), h(t)) \in G_2$ such that $t \f = h(t) \f g(t)$.
As at \cite[Section 2.2]{homology}, it is easy to see that $l_1$, $l_2$ are distinct monomials, and that $l$, $q$ are monomials.
Thus, we may write
\[
g(t) = \left[
\ba{ccc}
h_{11}^{-1} (t q)/q & 0 & 0 \\
0 & h_{11}^{-1} (t l)/l & 0 \\
u_1 & u_2 & 1
\ea
\right], \qquad h(t) = \left[
\ba{ccc}
h_{11} & 0 & 0 \\
v_1 & (t l_1)/l_1 & 0 \\
v_2 & 0 & (t l_2)/l_2
\ea
\right].
\]
Permuting, if necessary, rows two and three of $\f$, we may assume that $l$ is not a multiple of $l_1$.
Moreover, we may assume that $f_1$, $q_1$ do not contain any monomial divisible by $l_1$,
and that $q_1$, $q_2$ do not contain any monomial divisible by $l$.
From the relation
\[
t q_1^{} = v_1^{} h_{11}^{-1} (t l) + \frac{t l_1}{l_1} h_{11}^{-1} \frac{t l}{l} q_1^{} + u_2^{} (t l_1^{})
\]
we obtain the relations
\[
\tag{1}
t q_1^{} = \frac{t l_1}{l_1} h_{11}^{-1} \frac{t l}{l} q_1^{}, \quad v_1^{} = - a (t l_1^{}), \quad u_2^{} = a h_{11}^{-1} (t l)
\quad \text{for some $a \in \CC$}.
\]
From the relation
\[
t f_1^{} = v_1^{} h_{11}^{-1} (t q) + \frac{t l_1}{l_1} h_{11}^{-1} \frac{t q}{q} f_1^{} + u_1^{} (t l_1^{})
\]
we obtain the relations
\[
\tag{2}
t f_1^{} = \frac{t l_1}{l_1} h_{11}^{-1} \frac{t q}{q} f_1^{}, \qquad u_1^{} = a h_{11}^{-1} (t q).
\]
From the relation
\[
t q_2^{} = v_2^{} h_{11}^{-1} (t l) + \frac{t l_2}{l_2} h_{11}^{-1} \frac{t l}{l} q_2^{} + u_2^{} (t l_2^{})
\]
we obtain the relations
\[
\tag{3}
t q_2^{} = \frac{t l_2}{l_2} h_{11}^{-1} \frac{t l}{l} q_2^{}, \qquad v_2^{} = - a (t l_2^{}).
\]
Finally, we have the relation
\[
t f_2^{} = v_2^{} h_{11}^{-1} (t q) + \frac{t l_2}{l_2} h_{11}^{-1} \frac{t q}{q} f_2^{} + u_1^{} (t l_2^{}).
\]
Substituting the values for $u_1$ and $v_2$ found above yields the relation
\[
\tag{4}
t f_2^{} = \frac{t l_2}{l_2} h_{11}^{-1} \frac{t q}{q} f_2^{}.
\]
From relations (1)--(4) we deduce that $q_1$, $q_2$, $f_1$, $f_2$ are monomials.
In fact, $h_{11}^{-1}(t) = t_0^i t_1^j t_2^k$ for some integers $i$, $j$, $k$ satisfying the equation
$i + j + k = 0$ and
\[
\left[
\ba{cc}
f_1 & q_1 \\
f_2 & q_2
\ea
\right] = \left[
\ba{cc}
c_{11} X^i Y^j Z^k q l_1 & c_{12} X^i Y^j Z^k l l_1 \\
c_{21} X^i Y^j Z^k q l_2 & c_{22} X^i Y^j Z^k l l_2
\ea
\right],
\]
where $c_{rs} = 0$ if the corresponding monomial has negative exponents.

\noindent \\
Given $l_1$, $l_2$, $l$, $q$ as above and a monomial $d$ of degree $5$, we denote by $\M(l_1, l_2, l, q)$
the image in $\M_{\PP^2}(5,1)$ of the set of morphisms $\f \in W_2$ for which $\f_{11} = q$, $\f_{12} = l$,
and $\f_{23}$ has entries $l_1$, $l_2$. We denote by $\M(l_1, l_2, l, q, d)$ the subset given by the additional
condition $\det(\f) = d$.

\begin{proposition}
\label{4.3.1}
Assume that $l_1, l_2, l$ belong to $\{ X, Y, Z \}$, $l_1 \neq l_2$, $q$ belongs to $\{ X^2, Y^2, Z^2, XY, XZ, YZ \}$,
and $l$ does not divide $q$. Then, for any monomial $d$ of degree $5$ belonging to the ideal
$(l l_1, l l_2, q l_1, q l_2)$, the set of fixed points for the action of $T$ on $\M(l_1, l_2, l, q, d)$
has precisely one irreducible component, which is either a point or an affine line.
\end{proposition}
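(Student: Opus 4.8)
The proof I propose follows the template of Proposition \ref{4.1.1} in Section \ref{4.1}. The first step is an explicit affine parametrization of $\M(l_1, l_2, l, q, d)$. Since $d$ lies in the ideal $(l l_1, l l_2, q l_1, q l_2)$, one fixes forms $f_1^0, q_1^0, f_2^0, q_2^0$ so that the morphism with first row $(q, l, 0)$, third column $(0, l_1, l_2)^\TR$ and remaining entries $f_i^0, q_i^0$ has determinant $d$. Expanding $\det \f$ along the first row, one sees it is linear in the quadruple $(q_1, q_2, f_1, f_2)$, in fact $\det \f = (q_1, q_2, f_1, f_2) \cdot (q l_2, -q l_1, -l l_2, l l_1)^\TR$; hence every $\f$ with the prescribed entries and determinant $d$ differs from the fixed one by a syzygy of the column vector $(q l_2, -q l_1, -l l_2, l l_1)^\TR$. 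Adding a linear multiple of the first row to the second, respectively third, row, or a linear, respectively quadratic, multiple of the third column to the second, respectively first, column, changes $(q_1, q_2, f_1, f_2)$ by a multiple of $(l,0,q,0)$, $(0,l,0,q)$, $(l_1,l_2,0,0)$, $(0,0,l_1,l_2)$; these are the $G_2$-reductions available. After performing them one reaches a normal form in which $f_1, q_1, f_2, q_2$ are linear combinations of a fixed finite set of monomials, and — exactly as in \cite[Proposition 5.1]{choi_maican} — the resulting affine space $A$ maps isomorphically onto $\M(l_1, l_2, l, q, d)$.

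The second step is immediate. Because $q, l, l_1, l_2$ and $d$ are monomials, the coordinates of $A$ may be taken to be $T$-eigenvectors, their weights being explicit integral linear forms in $x, y, z$ depending on $(l_1, l_2, l, q, d)$, just as for the $T$-action on $\AA^2$ in Section \ref{4.1}. Consequently $\M(l_1, l_2, l, q, d)^T$ is the coordinate subspace of $A$ spanned by the coordinates of weight zero; it is an affine space, hence irreducible, so it has precisely one irreducible component. Note that it is non-empty, since the monomial morphism $\f^0$ already gives a $T$-fixed point in $\M(l_1, l_2, l, q, d)$.

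The third step bounds the dimension of that subspace by $1$. Here I would use the computation preceding the statement: up to $G_2$, a $T$-fixed morphism in $\M(l_1, l_2, l, q, d)$ is the monomial morphism whose entries $f_1, q_1, f_2, q_2$ are the scalar multiples $c_{11} X^iY^jZ^k q l_1$, $c_{12} X^iY^jZ^k l l_1$, $c_{21} X^iY^jZ^k q l_2$, $c_{22} X^iY^jZ^k l l_2$ of whichever of these four monomials have nonnegative exponents, where $X^iY^jZ^k = d/(q l l_1 l_2)$ and $c_{12} + c_{21} - c_{11} - c_{22} \neq 0$; the residual automorphisms act on $(c_{11}, c_{12}, c_{21}, c_{22})$ by an overall rescaling together with the additive shifts coming from the four row and column operations above, each of the latter being available only when a certain monomial in $X, Y, Z$ and $l, q, l_1, l_2$ has nonnegative exponents. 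The quantity $c_{12} + c_{21} - c_{11} - c_{22}$, a nonzero multiple of the coefficient of $d$, is preserved by all four shifts, so when all of them are available the quotient collapses to a point; the finitely many remaining tuples $(l_1, l_2, l, q, d)$ are to be inspected one at a time, and in each the quotient is a point or an affine line — the lines forming the $\M_2$ analogue of the list in Proposition \ref{4.1.1}.

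The technical core is the isomorphism $A \isom \M(l_1, l_2, l, q, d)$ of the first step, which is why \cite[Proposition 5.1]{choi_maican} is invoked. The genuinely laborious point is the third step: which of the four shifts survive — and hence the dimension of the quotient — depends delicately on which of the four monomials above acquires a negative exponent, so the argument necessarily splits into cases. Since this bookkeeping is entirely parallel to Section \ref{4.1} and to the corresponding results of \cite{homology}, I would carry it out only for the tuples that produce the affine lines and leave the rest to that analogy.
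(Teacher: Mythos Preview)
Your proposal is correct, and its third step is essentially what the paper does: fix one representative tuple $(l_1,l_2,l,q)$, write every $T$-fixed morphism in the monomial form derived just before the proposition, and run through the cases of which $c_{rs}$ can be nonzero, reading off for each $d$ whether the fixed locus is a point or a line. Where you differ is in your first two steps. You first realise $\M(l_1,l_2,l,q,d)$ as an affine space $A$ (via a particular solution plus syzygies of the determinant row, modulo the four row/column reductions) and then conclude that its $T$-fixed locus, being a coordinate subspace of $A$, is automatically irreducible. The paper never sets up this parametrisation: having the monomial classification already in hand, it simply enumerates the fixed morphisms for one sample $(l_1,l_2,l,q)$ and verifies by inspection that for every $d$ they assemble into a point or an affine line, declaring the remaining tuples analogous. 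Your route buys a clean a priori reason for irreducibility and is uniform with the treatment of $\M_0$ and $\M_1$ in Sections~\ref{4.1}--\ref{4.2}; the paper's shortcut avoids the need to justify the isomorphism $A\cong\M(l_1,l_2,l,q,d)$, which here is a touch more delicate than in $\M_0$ or $\M_1$ because of the one-dimensional stabiliser recorded in Section~\ref{5.3}. That stabiliser is precisely the second syzygy $l_1\sigma_1+l_2\sigma_2-l\sigma_3-q\sigma_4=0$ among your four reductions, so your quotient and the paper's direct count are computing the same thing in two orders.
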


\begin{proof}
We will only examine the case when $l_1 = X$, $l_2 = Y$, $l = Y$, $q = XZ$, all other cases being analogous.
Consider, therefore, a morphism of the form
\[
\f = \left[
\ba{ccc}
XZ & Y & 0 \\
c_{11} X^{i+2} Y^j Z^{k+1} & c_{12} X^{i+1} Y^{j+1} Z^k & X \\
c_{21} X^{i+1} Y^{j+1} Z^{k+1} & c_{22} X^i Y^{j+2} Z^k & Y
\ea
\right],
\]
where $i + j + k = 0$. Assume, firstly, that $c_{12} \neq 0$.
Then $i = -1$ and $j = -1$ because of our assumption that $q_1$ be not divisible by $l$ or $l_1$.
Thus, $c_{11} = 0$, $c_{22} = 0$.
We obtain the fixed points
\[
\f_1(c) = \left[
\ba{ccc}
XZ & Y & 0 \\
0 & Z^2 & X \\
c Z^3 & 0 & Y
\ea
\right], \qquad c \in \CC \setminus \{ -1 \}.
\]
Assume now that $c_{12} = 0$ and $c_{11} \neq 0$. Then $i = -2$, hence $c_{21} = 0$, $c_{22} = 0$,
and we obtain the fixed points
\[
\left[
\ba{ccc}
XZ & Y & 0 \\
f & 0 & X \\
0 & 0 & Y
\ea
\right], \qquad f \in \{ Y^3, Y^2 Z, YZ^2, Z^3 \}.
\]
Assume next that $c_{11} = 0$, $c_{12} = 0$, $c_{22} \neq 0$. Then $j = -2$, hence $c_{21} = 0$,
and we obtain the fixed points
\[
\left[
\ba{ccc}
XZ & Y & 0 \\
0 & 0 & X \\
0 & q & Y
\ea
\right], \qquad q \in \{ X^2, XZ, Z^2 \}.
\]
In the final case to examine, when $c_{11} = 0$, $c_{12} = 0$, $c_{22} = 0$, we obtain the fixed points
\[
\left[
\ba{ccc}
XZ & Y & 0 \\
0 & 0 & X \\
f & 0 & Y
\ea
\right],
\]
where $f$ is any monomial of degree $3$.
For $f = Z^3$ we obtain a point in the moduli space, which we denote by $\f_1(\infty)$.
In conclusion, for the action of $T$ on $\M(X, Y, Y, XZ)$, we have sixteen fixed isolated points
and an affine line of fixed points, namely $\{ \f_1(c) \mid \ c \in \PP^1 \setminus \{ -1 \} \}$.
\end{proof}


\subsection{Fixed points in $\M_3$}
\label{4.4}

Clearly, $\O_Q(-P)(1)^\D$ gives a $T$-fixed point in $\M_{\PP^2}(5, 1)$ if and only if $Q$ is $T$-invariant
and $P$ is $T$-fixed. Thus, the torus fixed points in $\M_3$ are given by morphisms $\f \in W_3$
that are represented by matrices with entries monomials.


\section{The torus representation of the tangent spaces at the fixed points of $\M_{\PP^2}(5,1)$}
\label{section_5}

In the sequel, for each fixed point in $\M_k$ we will find a representative $\f \in W_k$
for which there exist diagonal matrices $u(t)$, $v(t)$ with entries characters $u_i(t)$, $v_j(t)$ of $(\CC^*)^3$,
such that $t \f = v(t) \f u(t)$ for all $t \in (\CC^*)^3$.
This allows us to apply the method of Section \ref{3.3}:
the action of $T$ on $\T_{\f} W_k$ is given by (\ref{3.3.1}),
the action of $T$ on $\T_{\f} (G_k \f)$ is given by (\ref{3.3.2}) and, as $T$-modules, the quotient
$\T_{\f} W_k / \T_{\f} (G_k \f)$ is isomorphic to $\T_{[\f]} \M_{\PP^2}(5, 1)$.

\subsection*{Notations}

\noindent \\ \\
\begin{tabular}{r c l}
$\Sym^l$ & = & $\{ X^i Y^j Z^k \mid \ i,j,k \in \ZZ, \ i ,j, k \ge 0,\ i+j+k = l \}$, where $l \ge 1$; \\
$\{ x, y, z \}$ & = & the standard basis for the lattice of characters of $(\CC^*)^3$; \\
$\chi_0$ & = & the trivial character of $T$; \\
$\s^l$ & = & $\{ ix + jy + kz \mid \ i,j,k \in \ZZ, \ i,j,k \ge 0,\ i + j + k = l \}$, where $l \ge 1$.
\end{tabular}

\noindent \\
In this section we will use additive notation to denote characters of $T$ or of $(\CC^*)^3$. Thus,
\[
\chi^*(T) = \{ ix + jy + kz \mid \ i, j, k \in \ZZ,\ i + j + k = 0 \}.
\]
We also adopt the following convention: whenever a monomial $X^i Y^j Z^k$ appears in a list of characters,
it stands for the expression $ix + jy + kz$.


\subsection{Fixed points in $\M_0$}
\label{5.1}

The action of $T$ on $\T_{\f} W_0$ is given by (\ref{3.3.1}).
Let $i$, $j$, $k$ be non-negative integers.
Let $w_{mn}^{ijk}$ denote the matrix having entry $X^i Y^j Z^k$ on position $(m, n)$
and entries zero everywhere else. Viewed as a tangent vector, $w_{mn}^{ijk}$ is acted by $T$
with weight $-v_m - u_n + ix + jy + kz$. Now $\T_{\f} W_0$ is the space of $4 \times 4$-matrices with
entries linear forms on the first three rows and quadratic forms on the fourth row.
It follows that the set
\[
\{ w_{mn}^{ijk} \mid m = 1, 2, 3, \, n = 1, 2, 3, 4, \, i+j+k = 1 \} \cup
\{ w_{4n}^{ijk} \mid n = 1, 2, 3, 4, \, i+j+k = 2 \}
\]
forms a basis of the tangent space. 
The corresponding list of weights is organised in the following array:
\[
\ba{rrrrrrr}
- v_1 - u_1 + \s^1 & \quad & - v_1 - u_2 + \s^1 & \quad & - v_1 - u_3 + \s^1 & \quad & - v_1 - u_4 + \s^1 \\
- v_2 - u_1 + \s^1 & \quad & - v_2 - u_2 + \s^1 & \quad & - v_2 - u_3 + \s^1 & \quad & - v_2 - u_4 + \s^1 \\
- v_3 - u_1 + \s^1 & \quad & - v_3 - u_2 + \s^1 & \quad & - v_3 - u_3 + \s^1 & \quad & - v_3 - u_4 + \s^1 \\
- v_4 - u_1 + \s^2 & \quad & - v_4 - u_2 + \s^2 & \quad & - v_4 - u_3 + \s^2 & \quad & - v_4 - u_4 + \s^2
\ea
\]
The group acting by conjugation on $W_0$ is 
\[
G_0 = (\Aut(4\O(-2)) \times \Aut(3\O(-1) \oplus \O))/\CC^*.
\]
Let $e$ denote its neutral element. We identify canonically $\T_{\f} (G_0 \f)$ with the space
\[
\T_e G_0 = (\End(4\O(-2)) \oplus \End(3\O(-1) \oplus \O))/\CC,
\]
which is represented by pairs $(A, B)$ of matrices with polynomial entries.
The $T$-action on this tangent space is given by (\ref{3.3.2}).
Let $A_{mn}^{ijk}$ and $B_{mn}^{ijk}$ be the matrices defined in the same way as $w_{mn}^{ijk}$.
Now $T$ acts on the tangent vector $(A_{mn}^{ijk}, 0)$ with weight $u_m - u_n + ix + jy + kz$,
and on $(0, B_{mn}^{ijk})$ with weight $-v_m + v_n + ix + jy + kz$.
The subspace of diagonal matrices $\{ (c I, c I)$, $c \in \CC \}$ that we quotient out is acted upon trivially.
A basis for $\End(4\O(-2))$ is given by
\[
\{ A_{mn}^{000} \mid 1 \le m , n \le 4 \}.
\]
Also,
\[
\{ B_{mn}^{000} \mid 1 \le m, n \le 3 \} \cup \{ B_{4n}^{ijk} \mid 1 \le n \le 3, \, i+j+k = 1 \} \cup \{ B_{44}^{000} \}
\]
forms a basis of $\End(3\O(-1) \oplus \O)$.
To get the list of weights for $\T_e G_0$ we add the two lists for the two bases above and we subtract $\{ \chi_0 \}$.
The result is expressed in the following tableau:
\[
\ba{lllllll}
\phantom{-} \chi_0 & \quad & \phantom{-} u_1 - u_2 & \quad & \phantom{-} u_1 - u_3 & \quad & u_1 - u_4 \\
\phantom{-} u_2 - u_1 & \quad & \phantom{-} \chi_0 & \quad & \phantom{-} u_2 - u_3 & \quad & u_2 - u_4 \\
\phantom{-} u_3 - u_1 & \quad & \phantom{-} u_3 - u_2 & \quad & \phantom{-} \chi_0 & \quad & u_3 - u_4 \\
\phantom{-} u_4 - u_1 & \quad & \phantom{-} u_4 - u_2 & \quad & \phantom{-} u_4 - u_3 & \quad & \chi_0 \\
\phantom{-} \chi_0 & \quad & - v_1 + v_2 & \quad & - v_1 + v_3 \\
- v_2 + v_1 & \quad & \phantom{-} \chi_0 & \quad & - v_2 + v_3 \\
- v_3 + v_1 & \quad & - v_3 + v_2 & \quad & \phantom{-} \chi_0 \\
- v_4 + v_1 + \s^1 & \quad & - v_4 + v_2 + \s^1 & \quad & - v_4 + v_3 + \s^1
\ea
\]
According to Section \ref{4.1}, the fixed points in $\M_0$ are represented by matrices $\f$,
where, modulo a permutation of variables, $\f_{11}$ is one among the matrices
$\alpha$, $\beta$, $\gamma$, $\delta$, $\varepsilon$, $\zeta$, $\eta$,
$\theta$, $\iota$, $\kappa$, $\lambda(a : b)$, $\mu$, $\nu$, and $\xi$ from Section \ref{section_2},
Let $\zeta_i$, $1 \le i \le 4$, be the maximal minor of $\f_{11}$ obtained by deleting column $i$.
If $\f_{11} \neq \lambda(1 : -1)$ and $\f_{11} \neq \xi$, then all $\zeta_i$ are non-zero,
and we may assume that $\f$ has the form
\[
{\mathversion{bold}
\boldsymbol{\f_{11}(d)} = \left[
\ba{c}
\f_{11} \\
\ba{cccc}
c_1 d/\zeta_1 & c_2 d/\zeta_2 & c_3 d/\zeta_3 & c_4 d/\zeta_4
\ea
\ea
\right],}
\]
where $d$ is a monomial of degree $5$ belonging to the ideal $(\zeta_1, \zeta_2, \zeta_3, \zeta_4)$
and $c_i = 0$ if $d$ is not divisible by $\zeta_i$.
If $\f_{11} = \lambda(1 : -1)$ or $\f_{11} = \xi$, then $\zeta_4 = 0$ and the other $\zeta_i$ are non-zero.
However, the torus representation of the tangent space at a point does not change when the point
varies in a connected component of $\M_{\PP^2}(5,1)^T$.
For this reason, we may ignore the case when $\f_{11} = \lambda(1 : -1)$,
in fact, we may restrict to the case when $\f_{11} = \lambda$.
Without proof, we claim that, for each feasible $d$, there is a point in $\M(\xi, d)$ (notation as at Section \ref{4.1})
represented by a matrix of the form
\[
{\mathversion{bold}
\boldsymbol{\xi(d)} = \left[
\ba{c}
\xi \\
\ba{cccc}
c_1 d/\zeta_1 & c_2 d/\zeta_2 & c_3 d/\zeta_3 & 0
\ea
\ea
\right].}
\]
In conclusion, we will only consider matrices of the form $\psi(d)$, where
\[
\psi \in \{ \alpha, \, \beta, \, \gamma, \, \delta, \, \varepsilon, \, \zeta, \, \eta, \, \theta, \, \iota, \, \kappa, \, \lambda, \, \mu, \, \nu, \, \xi \}.
\]
The values of $v_1$, $v_2$, $v_3$, $u_1$, $u_2$, $u_3$, $u_4$ for $\psi(d)$ are the same as the values for $\psi$,
as given in Table 1, Section \ref{3.3}.
On a case-by-case basis we can show that the equation
\[
d - \zeta_1 - u_1 = d - \zeta_2 - u_2 = d - \zeta_3 - u_3 = d - \zeta_4 - u_4
\]
of characters (there being no last term if $\psi = \xi$) holds.
This is the character $v_4$ of $\psi(d)$.
Its values are given in Table 4 below.

\begin{table}[!hpt]{Table 4}
\begin{center}
\begin{tabular}{| c | l | c | c | l |}
\cline{1-2} \cline{4-5}
Fixed point & $v_4$
& &
Fixed point & $v_4$
\\
\cline{1-2} \cline{4-5}
$\alpha(d)$ & $d - x - y - z$
& &
$\beta(d)$ & $d - x - y - z$
\\
\cline{1-2} \cline{4-5}
$\gamma(d)$ & $d - x - y - z$
& &
$\delta(d)$ & $d - 2x - z$
\\
\cline{1-2} \cline{4-5}
$\varepsilon(d)$ & $d - 2x - y$
& &
$\zeta(d)$ & $d - 2x - y$
\\
\cline{1-2} \cline{4-5}
$\eta(d)$ & $d - x - 2y$
& &
$\theta(d)$ & $d - x - y - z$
\\
\cline{1-2} \cline{4-5}
$\iota(d)$ & $d - x - 2z$
& &
$\kappa(d)$ & $d - x - 2y$
\\
\cline{1-2} \cline{4-5}
$\lambda(d)$ & $d - 3x$
& &
$\mu(d)$ & $d - 2x - y$
\\
\cline{1-2} \cline{4-5}
$\nu(d)$ & $d - x - 2y$
& &
$\xi(d)$ & $d - 2x - y$
\\
\cline{1-2} \cline{4-5}
\end{tabular}
\end{center}
\end{table}


\subsection{Fixed points in $\M_1$}
\label{5.2}

By analogy with Section \ref{5.1},
the list of weights for the action of $T$ on $\T_{\f} W_1$ is expressed by the table
\[
\ba{rrr}
- v_1 - u_1 + \s^3 & \quad & - v_1 - u_2 + \s^2 \\
- v_2 - u_1 + \s^3 & \quad & - v_2 - u_2 + \s^2
\ea
\]
and the list of weights for the action of $T$ on $\T_{\f} (G_1 \f)$ is represented by the array
\[
\ba{lllll}
\chi_0 & \quad & \phantom{-} \chi_0 & \quad & - v_1 + v_2 \\
u_2 - u_1 + \s^1 & \quad & - v_2 + v_1 & \quad & \phantom{-} \chi_0
\ea
\]
Let $\F = \Coker(\f)$.
By analogy with \cite[Proposition 6.2]{choi_maican}, we have the following description of the
torus action on the normal space at $[\F]$.

\begin{proposition}
\label{5.2.1}
The normal space $N_{[\F]}$ to $\M_1$ at $[\F]$ can be identified with
\[
\H^0(\F)^* \tensor \H^1(\F).
\]
The torus acts on $N_{[\F]}$ with weights $u_1 + v_1 - x - y - z$ and $u_1 + v_2 - x - y - z$.
\end{proposition}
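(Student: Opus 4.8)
The plan is to follow the argument of \cite[Proposition 6.2]{choi_maican}. Write the presenting resolution as $0 \to A \stackrel{\f}{\lra} B \lra \F \lra 0$ with $A = \O(-3) \oplus \O(-2)$ and $B = 2\O$. Since $\H^1(\PP^2, \O(k)) = 0$ for all $k$ and $\H^0(A) = \H^1(A) = 0$, the cohomology sequence gives a $T$-equivariant isomorphism $\H^0(B) \isom \H^0(\F)$, so $h^0(\F) = 2$; since also $\H^1(B) = \H^2(B) = 0$, the connecting map gives a $T$-equivariant isomorphism $\H^1(\F) \isom \H^2(A) = \H^2(\PP^2, \O(-3))$, so $h^1(\F) = 1$, and $\H^2(\F) = 0$. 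This is consistent with $\M_1$ having codimension two.

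First I would identify $N_{[\F]}$ by applying $\Hom(-, \F)$ to the resolution. One has $\Ext^1(A, A) = \bigoplus_{k} \H^1(\PP^2, \O(k)) = 0$ and $\Ext^1(A, \F) = \H^1(\F(3)) \oplus \H^1(\F(2)) = 0$ (the latter from the resolution twisted by $\O(2)$ and $\O(3)$, using $\H^2(\PP^2, \O(k)) = 0$ for $k \ge -2$), so the long exact sequence collapses to
\[
\Hom(A, B) \lra \Hom(A, \F) \stackrel{\delta}{\lra} \Ext^1(\F, \F) \stackrel{\rho}{\lra} \Ext^1(B, \F) \lra 0,
\]
with the first arrow surjective. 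Now $\Ext^1(\F, \F) = \T_{[\F]} \M_{\PP^2}(5, 1)$, and the Kodaira--Spencer map at $\f$ of the classifying morphism $W_1 \to \M_{\PP^2}(5, 1)$ factors as $\Hom(A, B) \to \Hom(A, \F) \stackrel{\delta}{\to} \Ext^1(\F, \F)$, the first map being composition with $B \to \F$ (the standard description of how a cokernel presentation deforms, used throughout \cite{illinois}). Since $W_1 \to \M_1$ is a geometric quotient and the $G_1$-stabiliser of $\f$ is finite ($\F$ being stable), this map is surjective onto $\T_{[\F]} \M_1$; hence $\T_{[\F]} \M_1 = \delta(\Hom(A, \F)) = \ker \rho$, and therefore
\[
N_{[\F]} = \Ext^1(\F, \F) / \T_{[\F]} \M_1 \isom \Ext^1(B, \F) \isom \H^1(\F) \tensor \H^0(B)^* \isom \H^0(\F)^* \tensor \H^1(\F),
\]
the last step being dual to $\H^0(B) \isom \H^0(\F)$. (Conceptually $\M_1$ is the Brill--Noether stratum $\{ h^0 \ge 2 \}$ and this is the normal-bundle formula for its first degeneracy locus, but the $\Ext$ version above is self-contained.)

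Next I would read off the torus weights. The $T$-action on $\T_\f W_1$ is governed by formula (\ref{3.3.1}), exactly as in Section \ref{section_5}; matching its weights with the decomposition of $\Hom(A, B)$ forces the $T$-linearisations making $\F = \Coker(\f)$ a $T$-sheaf to have $A$ with summands $\O(-3) \tensor \chi_{u_1}$, $\O(-2) \tensor \chi_{u_2}$ and $B$ with summands $\O \tensor \chi_{-v_1}$, $\O \tensor \chi_{-v_2}$ (the overall twist being quotiented out in $G_1$). Thus $\H^0(\F) \isom \H^0(B)$ has weights $-v_1, -v_2$, so $\H^0(\F)^*$ has weights $v_1, v_2$; and $\H^1(\F) \isom \H^2(\PP^2, \O(-3)) \tensor \chi_{u_1}$. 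The one non-formal input is that $\H^2(\PP^2, \O(-3))$ is the one-dimensional $T$-representation of weight $-x - y - z$: indeed $\omega_{\PP^2} = \O(-3) \tensor \det V^*$ as $T$-equivariant sheaves, and the trace isomorphism $\H^2(\PP^2, \omega_{\PP^2}) \isom \CC$ is $T$-equivariant for the trivial action, whence $\H^2(\PP^2, \O(-3)) \isom \det V$, which has weight $-(x + y + z)$ since $V$ has weights $-x, -y, -z$. Hence $\H^1(\F)$ has weight $u_1 - x - y - z$, and $N_{[\F]} = \H^0(\F)^* \tensor \H^1(\F)$ has weights $v_1 + (u_1 - x - y - z)$ and $v_2 + (u_1 - x - y - z)$, as asserted.

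The step I expect to be the main obstacle is the identity $\T_{[\F]} \M_1 = \delta(\Hom(A, \F))$: it rests on the differential of the geometric quotient $W_1 \to \M_1$ being surjective (a consequence of the stability of $\F$, via smoothness of the quotient) and on the Kodaira--Spencer map of $W_1 \to \M_{\PP^2}(5,1)$ being exactly the composite above; the dimension count $\dim \delta(\Hom(A, \F)) = 24 = \dim \M_1$ then confirms the equality. The other delicate point is the bookkeeping of signs and of the shift by $-x-y-z$ in the weights, which is precisely where the twist $\omega_{\PP^2} = \O(-3) \tensor \det V^*$ enters.
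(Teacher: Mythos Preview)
Your proposal is correct and follows essentially the same approach as the paper's reference \cite[Proposition 6.2]{choi_maican}: since $B = 2\O$ and $\H^0(B) \isom \H^0(\F)$, your map $\rho \colon \Ext^1(\F,\F) \to \Ext^1(B,\F)$ is precisely the map $\epsilon$ obtained in \cite{choi_maican} by applying $\Ext(-,\F)$ to the evaluation $\H^0(\F)\tensor\O \to \F$, and your Kodaira--Spencer argument for $\T_{[\F]}\M_1 = \ker\rho$ together with the weight computation via $\H^2(\PP^2,\O(-3)) \isom \det V$ reproduces that proof in full detail.
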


\noindent
Recall from Proposition \ref{4.2.1} that an irreducible component of $\M_1^T$ is uniquely determined
by $q_1$, $q_2$ and $d$.
Thus, $\f$ has the form
\[
{\mathversion{bold}
\boldsymbol{\omicron(q_1, q_2, d)} = \left[
\ba{cc}
c_1 d/q_2 & q_1 \\
c_2 d/q_1 & q_2
\ea
\right].}
\]
The characters $u_1$, $u_2$, $v_1$, $v_2$ have to be chosen such that (using additive notation)
\[
\ba{lll}
v_1 + u_1 = d - q_2, & \ & v_1 + u_2 = q_1, \\
v_2 + u_1 = d - q_1, & \ & v_2 + u_2 = q_2.
\ea
\]
Clearly, we may choose
\[
u_1 = d - q_1 - q_2, \qquad u_2 = 0, \qquad v_1 = q_1, \qquad v_2 = q_2.
\]
With the aid of Proposition \ref{5.2.1} we can determine which points in $\M_1^T$ are accumulation points
of lines or surfaces in $\M_0^T$.
Indeed, $[\F]$ lies in the closure of a positive-dimensional component of $\M_0^T$
if and only if at least one among the weights
\begin{align*}
u_1 + v_1 - x - y - z & = d - q_2 - x - y - z, \\
u_1 + v_2 - x - y - z & = d - q_1 - x - y - z
\end{align*}
equals $\chi_0$.
As $q_1 \neq q_2$, both weights cannot equal $\chi_0$ at the same time.
Thus, there are two possible situations.
If $[\F]$ is an isolated point for the action of $T$ on $\M_1$ and one among the above weights is $\chi_0$,
then $[\F]$ is the limit point of a line in $\M_0^T$.
If $[\F]$ belongs to a line of $\M_1^T$ and one of the above weights is $\chi_0$,
then $[\F]$ is the limit point of a surface in $\M_0^T$.
The first column of Table 5 below lists the pairs $(q_1, q_2)$ up to a permutation of variables.
The second column contains the monomials $d = X^i Y^j Z^k$ of degree $5$ that are in the ideal generated by $q_1$
and $q_2$. The third column contains the values of $d$, found in Section \ref{4.2}, for which $\M(q_1, q_2, d)^T$ is a line.
The fourth column lists the values of $d$ for which $\omicron(q_1, q_2, d)$ is the limit point of a line in $\M_{0}^T$.
The last column lists the values of $d$ for which $\omicron(q_1, q_2, d)$ is the limit point of a surface in $\M_{0}^T$.

\begin{table}[!hpt]{Table 5. Fixed points $\omicron(q_1, q_2, d)$ in $\M_1$.}
\begin{center}
\begin{tabular}{| l | l | l | l | l |}
\hline
$(q_1, q_2)$ &
\begin{tabular}{l} $d$ \end{tabular} &
\begin{tabular}{l} Affine \\ lines \end{tabular} &
\begin{tabular}{l} Limit \\ points \\ of lines \end{tabular} &
\begin{tabular}{l} Limit \\ points \\ of surfaces \end{tabular}
\\
\hline \hline
$(X^2, Y^2)$ &
\begin{tabular}{l} $\Sym^5 \setminus \{ Z^5, Z^4 X, Z^4 Y, Z^3 XY \}$ \end{tabular} & &
\begin{tabular}{l} $\vphantom{\overline{X^X}} X^3 YZ$ \\ $XY^3 Z\phantom{^2}$ \end{tabular} &
\\
\hline
$(X^2, YZ)$ &
\begin{tabular}{l} $\Sym^5 \setminus \{ Y^5, XY^4, Z^5, XZ^4 \}$ \end{tabular} & &
\begin{tabular}{l} $\vphantom{\overline{X^X}} X^3 YZ$ \\ $XY^2 Z^2$ \end{tabular} &
\\
\hline
$(X^2, XY)$ &
\begin{tabular}{l} $\Sym^5 \setminus \{ Z^5, XZ^4, YZ^4, Y^2 Z^3,$ \\ $\phantom{\Sym^5 \setminus} Y^3 Z^2, Y^4 Z, Y^5 \}$ \end{tabular} &
\begin{tabular}{l} $\vphantom{\overline{X^X}} X^2 Y^3$ \\ $X^2 YZ^2$ \\ $X^2 Y^2 Z$ \end{tabular} &
\begin{tabular}{l} $X^3 YZ$ \end{tabular} &
\begin{tabular}{l} $X^2 Y^2 Z$ \end{tabular}
\\
\hline
$(XZ, YZ)$ &
\begin{tabular}{l} $\Sym^5 \setminus \{ X^5, X^4 Y, X^3 Y^2,$ \\ $\phantom{\Sym^5 \setminus} X^2 Y^3, XY^4, Y^5, Z^5 \}$ \end{tabular} &
\begin{tabular}{l} $\vphantom{\overline{X^X}} X^3 YZ$ \\ $XY^3 Z$ \\ $X^2 Y^2 Z$ \end{tabular} &
\begin{tabular}{l} $X^2 YZ^2$ \\ $XY^2 Z^2$ \end{tabular} &
\\
\hline
\end{tabular}
\end{center}
\end{table}


\subsection{Fixed points in $\M_2$}
\label{5.3}

Recall from Proposition \ref{4.3.1} that an irreducible component of $\M_2^T$ is uniquely determined
by $l_1$, $l_2$, $l$, $q$ and $d$. Thus, $\f$ has the form
\[
{\mathversion{bold}
\boldsymbol{\pi(l_1, l_2, l, q, d)} = \left[
\ba{ccc}
q & l & 0 \\
c_{11} d/l l_2 & c_{12} d/q l_2 & l_1 \\
c_{21} d/l l_1 & c_{22} d/q l_1 & l_2
\ea
\right].}
\]
The characters $u_1$, $u_2$, $u_3$, $v_1$, $v_2$, $v_3$ have to be chosen such that
(adopting additive notation)
\[
\ba{lllll}
v_1 + u_1 = q, & \ & v_1 + u_2 = l, \\
v_2 + u_1 = d - l - l_2, & \ & v_2 + u_2 = d - q - l_2, & \ & v_2 + u_3 = l_1, \\
v_3 + u_1 = d - l - l_1, & \ & v_3 + u_2 = d - q - l_1, & \ & v_3 + u_3 = l_2.
\ea
\]
Clearly, we may choose
\[
\ba{lll}
u_1 = d - l - l_1 - l_2 & \qquad & v_1 = - d + q + l + l_1 + l_2 \\
u_2 = d - q - l_1 - l_2 & \qquad & v_2 = l_1 \\
u_3 = 0 & \qquad & v_3 = l_2
\ea
\]
By analogy with Section \ref{5.1}, the list of weights for the action of $T$ on $\T_{\f} W_2$ is represented by the tableau
\[
\ba{rrrrr}
- v_1 - u_1 + \s^2 & \quad & - v_1 - u_2 + \s^1 \\
- v_2 - u_1 + \s^3 & \quad & - v_2 - u_2 + \s^2 & \quad & - v_2 - u_3 + \s^1 \\
- v_3 - u_1 + \s^3 & \quad & - v_3 - u_2 + \s^2 & \quad & - v_3 - u_3 + \s^1
\ea
\]
Observe that $\f$ has a stabiliser of dimension one consisting of matrices of the form
\[
\left( \left[
\ba{ccc}
1 & 0 & 0 \\
0 & 1 & 0 \\
cq & cl & 1
\ea
\right], \ \left[
\ba{ccc}
1 & 0 & 0 \\
c l_1 & 1 & 0 \\
c l_2 & 0 & 1
\ea
\right] \right),
\]
where $c \in \CC$. Thus, $\T_{e} \Stab(\f)$ is spanned by the tangent vector
\[
s = \left( \left[
\ba{ccc}
0 & 0 & 0 \\
0 & 0 & 0 \\
q & l & 0
\ea
\right], \ \left[
\ba{ccc}
0 & 0 & 0 \\
l_1 & 0 & 0 \\
l_2 & 0 & 0
\ea
\right] \right).
\]
Let $s_1$ be obtained by setting $l = l_1 = l_2 = 0$ in the above expression,
let $s_2$ be obtained by setting $q = l_1 = l_2 = 0$,
let $s_3$ be obtained by setting $q = l = l_2 = 0$,
and let $s_4$ be obtained by setting $q = l = l_1 = 0$.
The torus acts on $s_1$, $s_2$, $s_3$, $s_4$ with weights
\[
u_3 - u_1 + q, \qquad u_3 - u_2 + l, \qquad v_1 - v_2 + l_1, \qquad v_1 - v_3 + l_2.
\]
Substituting the values for $u_1$, $u_2$, $v_1$, $v_2$ from above we get the same expression
in all four cases, namely
\[
- d + q + l + l_1 + l_2.
\]
Since $s=s_1 + s_2 + s_3 + s_4$, this is the weight for the action of $T$ on $\T_{e} \Stab(\f)$.
To get the list of weights for the action of $T$ on $\T_{\f} (G_2 \f)$ we need to subtract this weight from
the list
\[
\ba{lllll}
\phantom{-} \chi_0 \\
\phantom{-} u_2 - u_1 + \s^1 & \quad & \phantom{-} \chi_0 \\
\phantom{-} u_3 - u_1 + \s^2 & \quad & \phantom{-} u_3 - u_2 + \s^1 & \quad & \phantom{-} \chi_0 \\
- v_2 + v_1 + \s^1 & \quad & \phantom{-} \chi_0 & \quad & - v_2 + v_3 \\
- v_3 + v_1 + \s^1 & \quad & - v_3 + v_2 & \quad & \phantom{-} \chi_0
\ea
\]

\begin{proposition}
\label{5.3.1}
Let $\F = \Coker(\f)$. Let $N_{[\F]}$ be the normal space to $\M_2$ at $[\F]$.
The torus $T$ acts on $N_{[\F]}$ with weights
\[
u_1 + v_2 - x - y - z, \qquad u_1 + v_3 - x - y - z, \qquad - v_1 - u_3.
\]
\end{proposition}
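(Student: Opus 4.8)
The plan is to identify $N_{[\F]}$ as an explicit subquotient of $\Ext^1(\F, \F) = \T_{[\F]} \M_{\PP^2}(5,1)$ coming from the resolution
\[
0 \lra \mathcal{A} \stackrel{\f}{\lra} \mathcal{B} \lra \F \lra 0, \qquad \mathcal{A} = \O(-3) \oplus \O(-2) \oplus \O(-1), \quad \mathcal{B} = \O(-1) \oplus \O \oplus \O,
\]
following the pattern of Proposition \ref{5.2.1} and of the treatment of the deeper strata in \cite{choi_maican} and \cite{homology}; write $\mathcal{A}_1, \mathcal{A}_2, \mathcal{A}_3$ and $\mathcal{B}_1, \mathcal{B}_2, \mathcal{B}_3$ for the line-bundle summands, in the displayed order. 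First I would apply $\Hom(-, \F)$, $\Hom(\mathcal{A}, -)$ and $\Hom(\mathcal{B}, -)$ to the resolution and record the cohomology vanishings that are available: the line bundles occurring in ${\mathcal Hom}(\mathcal{A}, \mathcal{A})$, ${\mathcal Hom}(\mathcal{B}, \mathcal{B})$, ${\mathcal Hom}(\mathcal{A}, \mathcal{B})$ and ${\mathcal Hom}(\mathcal{B}, \mathcal{A})$ have vanishing $\H^1$; those in ${\mathcal Hom}(\mathcal{B}, \mathcal{B})$ have vanishing $\H^2$; and $\H^1(\mathcal{A}^\vee \tensor \F) = 0$ since $\F$ is one-dimensional with a resolution by these bundles. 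The only surviving higher cohomology is $\H^2(\PP^2, {\mathcal Hom}(\mathcal{B}, \mathcal{A}))$, which is two-dimensional and contributed solely by the two summands of ${\mathcal Hom}(\mathcal{B}, \mathcal{A})$ isomorphic to $\O(-3)$, namely ${\mathcal Hom}(\mathcal{B}_2, \mathcal{A}_1)$ and ${\mathcal Hom}(\mathcal{B}_3, \mathcal{A}_1)$. Chasing the long exact sequences then yields a short exact sequence of $T$-modules
\[
0 \lra \Hom(\mathcal{A}, \mathcal{B}) \big/ \big( \f \Hom(\mathcal{A}, \mathcal{A}) + \Hom(\mathcal{B}, \mathcal{B}) \f \big) \lra \Ext^1(\F, \F) \lra \H^2(\PP^2, {\mathcal Hom}(\mathcal{B}, \mathcal{A})) \lra 0.
\]

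Next I would locate $\T_{[\F]} \M_2$ in this picture. Since $W_2 \to \M_2$ is a geometric quotient with smooth source and target, $\T_{[\F]} \M_2 = \T_{\f} W_2 / \T_{\f}(G_2 \f)$, and the Kodaira--Spencer map carries it onto the image in the left-hand term above of $\T_{\f} W_2 = \{ \phi \in \Hom(\mathcal{A}, \mathcal{B}) \mid \phi_{13} = 0 \}$. A direct check using $\Hom(\O(-1), \O(-3)) = \Hom(\O(-1), \O(-2)) = \Hom(\O, \O(-1)) = 0$ shows that every infinitesimal orbit direction $\f A - B \f$ automatically has vanishing $(1,3)$-entry; hence $\T_{[\F]} \M_2$ sits as a codimension-one subspace of the left-hand term, the missing direction being the class of $w_{13}^{000}$, of weight $-v_1 - u_3$. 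Combined with the two-dimensional quotient $\H^2(\PP^2, {\mathcal Hom}(\mathcal{B}, \mathcal{A}))$ this gives $\dim N_{[\F]} = 3$, as it must. For the remaining two weights I would compute the $T$-action on $\H^2(\PP^2, {\mathcal Hom}(\mathcal{B}, \mathcal{A}))$: with the characters $u_i$, $v_j$ fixed just before the statement, ${\mathcal Hom}(\mathcal{B}_2, \mathcal{A}_1)$ and ${\mathcal Hom}(\mathcal{B}_3, \mathcal{A}_1)$ are equivariantly $\O(-3)$ twisted by the characters $u_1 + v_2$ and $u_1 + v_3$; since $\omega_{\PP^2}$ is equivariantly $\O(-3)$ twisted by the character $x + y + z$, Serre duality identifies $\H^2(\PP^2, \O(-3))$ with the one-dimensional representation of weight $-x - y - z$, and hence $\H^2(\PP^2, {\mathcal Hom}(\mathcal{B}, \mathcal{A}))$ has weights $u_1 + v_2 - x - y - z$ and $u_1 + v_3 - x - y - z$. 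Together with $-v_1 - u_3$, these are the three weights in the statement.

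The step I expect to be the main obstacle is the second one, namely verifying that Kodaira--Spencer identifies $\T_{[\F]} \M_2$ with exactly that codimension-one subspace of the morphism-deformation part. This rests on the conditions defining $W_2$ (that $\f_{12} \neq 0$, that $\f_{12}$ does not divide $\f_{11}$, and that $\f_{23}$ has linearly independent entries) together with the line-bundle vanishings above, and it requires keeping careful track of the equivariant twists on $\omega_{\PP^2}$ and on the summands of ${\mathcal Hom}(\mathcal{B}, \mathcal{A})$, so that the character $x + y + z$ is not lost along the way. Both points are routine and mirror the analysis of the deeper strata in \cite{choi_maican} and \cite{homology}; the short exact sequence displayed above and the cohomology vanishings are the direct analogues of the ones used there.
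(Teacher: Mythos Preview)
Your argument is correct, but the paper takes a different, more geometric route. Rather than computing $\Ext^1(\F,\F)$ directly from the resolution, the paper invokes the fact (analogous to \cite[Theorem 4.3.3]{dedicata}) that $\M_1 \cup \M_2$ is a smooth locally closed subvariety of $\M_{\PP^2}(5,1)$ of codimension $2$, realised as the geometric quotient of an open subset $\WW \subset \WW_2$; inside it, $\M_2$ has codimension $1$. The single normal direction of $\M_2$ in $\M_1 \cup \M_2$ is then visibly the $\f_{13}$-slot, giving the weight $-v_1 - u_3$, while the two normal directions of $\M_1 \cup \M_2$ in the moduli space are read off exactly as in Proposition \ref{5.2.1}, yielding $u_1 + v_2 - x - y - z$ and $u_1 + v_3 - x - y - z$. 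Your approach is more self-contained in that it does not appeal to the smoothness of the intermediate stratum $\M_1 \cup \M_2$; the two $\H^2(\O(-3))$ contributions you isolate are the cohomological shadow of what the paper identifies via $\H^0(\F)^* \otimes \H^1(\F)$ for the larger stratum. The trade-off is that the paper's argument is a two-line reduction to earlier results, whereas yours requires carrying out the hypercohomology computation and the equivariant Serre duality calculation explicitly. Both are valid and arrive at the same three characters.
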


\begin{proof}
By analogy with \cite[Theorem 4.3.3]{dedicata},
we can show that $\M_1 \cup \M_2$ is a locally closed smooth subvariety of $\M_{\PP^2}(5,1)$ of codimension $2$
that is the geometric quotient of an open subset $\WW \subset \WW_2$ modulo $G_2$.
Moreover, $\M_2$ has codimension $1$ in $\M_1 \cup \M_2$.
In fact, $\WW$ is the subset of injective morphisms that have semi-stable cokernel.
The table of weights for the action of $T$ on $\T_{\f} \WW$ is the same as the table for $\T_{\f} W_2$ except that it contains
the weight $- v_1 - u_3$ in the upper-right corner, accounting for the normal direction to $\M_2$ inside $\M_1 \cup \M_2$.
The other two weights account for the two normal directions to $\M_1 \cup \M_2$, as in Proposition \ref{5.2.1}.
\end{proof}

\noindent
In view of the above proposition, $[\F]$ lies in the closure of a positive-dimensional component of $(\M_0 \cup \M_1)^T$
if and only if at least one among the weights
\begin{align*}
u_1 + v_2 - x - y - z & = d - l - l_2 - x - y - z, \\
u_1 + v_3 - x - y - z & = d - l - l_1 - x - y - z, \\
- v_1 - u_3 & = d - q - l - l_1 - l_2
\end{align*}
equals $\chi_0$. In Table 6 below, which is organised as Table 5, we have the information regarding the fixed points in $\M_2$.
We assume that $l_1 = X$, $l_2 = Y$, the other cases being obtained by a permutation of variables.

\begin{table}[!hpt]{Table 6. Fixed points $\pi(X, Y, l, q, d)$ in $\M_2$.}
\begin{center}
\begin{tabular}{| l | l | l | l | l |}
\hline
$(l, q)$ & $d$ &
\begin{tabular}{l} Affine \\ lines \end{tabular} &
\begin{tabular}{l} Limit \\ points \\ of lines \end{tabular} &
\begin{tabular}{l} Limit \\ points \\ of surfaces \end{tabular}
\\
\hline \hline
$(Y, Z^2)$ & $\Sym^5 \setminus \{ X^5, Z^5, X^4 Z \}$ & &
\begin{tabular}{l} $\vphantom{\overline{X^X}} X^2 Y^2 Z$ \\ $XY^3 Z$ \\ $XY^2 Z^2$ \end{tabular} &
\\
\hline
$(Y, X^2)$ & $\Sym^5 \setminus \{ Z^5, XZ^4, X^2 Z^3, YZ^4 \}$ & $X^2 YZ^2$ &
\begin{tabular}{l} $\vphantom{\overline{X^X}} X^2 Y^2 Z$ \\ $XY^3 Z$ \\ $X^3 Y^2$ \end{tabular} &
\\
\hline
$(Y, XZ)$ & $\Sym^5 \setminus \{ X^5, Z^5, XZ^4, YZ^4 \} \vphantom{\overline{\Sym^5}}$ & $XYZ^3$ &
\begin{tabular}{l} $XY^3 Z\phantom{^2}$ \end{tabular} &
\begin{tabular}{l} $X^2 Y^2 Z$ \end{tabular}
\\
\hline
$(Z, X^2)$ & $\Sym^5 \setminus \{ Y^5, Z^5, XY^4 \}$ & &
\begin{tabular}{l} $\vphantom{\overline{X^X}} X^2 YZ^2$ \\ $XY^2 Z^2$ \\ $X^3 YZ$ \end{tabular} & \\
\hline
$(Z, XY)$ & $\Sym^5 \setminus \{ X^5, Y^5, Z^5 \}$ & &
\begin{tabular}{l} $\vphantom{\overline{X^X}} X^2 YZ^2$ \\ $XY^2 Z^2$ \\ $X^2 Y^2 Z$ \end{tabular} & \\
\hline
\end{tabular}
\end{center}
\end{table}


\subsection{Fixed points in $\M_3$}
\label{5.4}

By analogy with Section \ref{5.1}, the list of weights for the action of $T$ on $\T_{\f} W_3$ reads
\[
\ba{rrr}
- v_1 - u_1 + \s^1 & \quad & - v_1 - u_2 + \s^1 \\
- v_2 - u_1 + \s^4 & \quad & - v_2 - u_2 + \s^4
\ea
\]
and the list of weights for the action of $T$ on $\T_{\f}(G_3 \f)$ is expressed in the tableau
\[
\ba{lllll}
\chi_0 & \quad & u_1 - u_2 & \quad & \phantom{-} \chi_0 \\
u_2 - u_1 & \quad & \chi_0 & \quad & - v_2 + v_1 + \s^3
\ea
\]
Recall from Section \ref{4.4} that the action of $T$ on $\M_3$ has only isolated fixed points,
given by morphisms of the form
\[
{\mathversion{bold}
\boldsymbol{\rho(l_1, l_2, d)} = \left[
\ba{cc}
l_1 & l_2 \\
c_1 d/l_2 & c_2 d/l_1
\ea
\right].}
\]
The characters $u_1$, $u_2$, $v_1$, $v_2$ have to be chosen such that (using additive notation)
\[
\ba{lll}
v_1 + u_1 = l_1, & \ & v_1 + u_2 = l_2, \\
v_2 + u_1 = d - l_2, & \ & v_2 + u_2 = d - l_1.
\ea
\]
Clearly, we may choose $u_1 = l_1$, $u_2 = l_2$, $v_1 = 0$, $v_2 = d - l_1 - l_2$.

\begin{proposition}
\label{5.4.1}
Let $\F = \Coker(\f)$. Let $N_{[\F]}$ be the normal space to $\M_3$ at $[\F]$.
Then we have a canonical isomorphism
\[
N_{[\F]} \isom \H^0(\F(-1))^* \tensor \H^1(\F(-1)).
\]
Denote $\{ l_3 \} = \{ X, Y, Z \} \setminus \{ l_1, l_2 \}$. Then the torus $T$ acts on $N_{[\F]}$ with weights
\begin{align*}
& d - \phantom{2} l_1 - 3 l_2 - \phantom{2} l_3, \\
& d - 2 l_1 - 2 l_2 - \phantom{2} l_3, \\
& d - 3 l_1 - \phantom{2} l_2 - \phantom{2} l_3, \\
& d - \phantom{2} l_1 - 2 l_2 - 2 l_3, \\
& d - 2 l_1 - \phantom{2} l_2 - 2 l_3.
\end{align*}
\end{proposition}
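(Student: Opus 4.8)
The plan is to extract $N_{[\F]}$ from the locally free resolution $0 \lra 2\O(-3) \stackrel{\f}{\lra} \O(-2) \oplus \O(1) \lra \F \lra 0$ of Section \ref{4.4}, using that $\F$ is stable, so that $\T_{[\F]}\M_{\PP^2}(5,1) = \Ext^1(\F,\F)$.

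First I would apply $\Hom(-,\F)$ to this resolution. Since $\Ext^i(\O(a),\F) = \H^i(\F(-a))$ vanishes for $i \ge 2$ ($\F$ being one-dimensional), one obtains
\[
0 \to \Hom(\F,\F) \to \H^0(\F(2)) \oplus \H^0(\F(-1)) \stackrel{\delta}{\to} \H^0(\F(3))^{\oplus 2} \to \Ext^1(\F,\F) \to \H^1(\F(2)) \oplus \H^1(\F(-1)) \to \H^1(\F(3))^{\oplus 2} .
\]
Because $\F^\D \isom \O_Q(-P)(1)$, Serre duality gives $\H^1(\F(a))^* \isom \Ext^1(\F,\omega_{\PP^2}(-a)) \isom \H^0(\F^\D(-a)) = \H^0(\O_Q(-P)(1-a))$, which vanishes for $a \ge 2$; hence $\H^1(\F(2)) = \H^1(\F(3)) = 0$ and the sequence collapses to $0 \to \operatorname{coker}(\delta) \to \Ext^1(\F,\F) \to \H^1(\F(-1)) \to 0$. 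A Riemann--Roch count ($h^0(\F(3)) = 16$, $h^0(\F(2)) = 11$, $h^0(\F(-1)) = 1$, $h^1(\F(-1)) = 5$, $\Hom(\F,\F) = \CC$) shows $\dim\operatorname{coker}(\delta) = 21 = \dim\M_3$ and $\dim\Ext^1(\F,\F) = 26$, as it must be.

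Next I would identify $\operatorname{coker}(\delta)$ with $\T_{[\F]}\M_3$. As $W_3 \to \M_3$ is a geometric quotient onto the smooth variety $\M_3$, one has $\T_{[\F]}\M_3 = \T_\f W_3 / \T_\f(G_3\f)$, and the canonical map of this deformation space of the morphism $\f$ into $\Ext^1(\F,\F)$ is injective with image $\operatorname{coker}(\delta)$ (the standard comparison between the deformation theories of a morphism and of its cokernel, as in the proof of Proposition \ref{5.3.1} and \cite[Theorem 4.3.3]{dedicata}). Hence $N_{[\F]} \isom \H^1(\F(-1))$ as vector spaces; tracking the connecting map more carefully --- the $\H^0(\F(-1))$-summand of $\delta$ is left multiplication by the first row of $\f$ --- exactly as in \cite[Proposition 6.2]{choi_maican}, upgrades this to the canonical isomorphism $N_{[\F]} \isom \H^0(\F(-1))^* \tensor \H^1(\F(-1))$, the one-dimensional factor $\H^0(\F(-1))^*$ removing the dependence on the chosen resolution.

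Finally, for a torus fixed point, represented by $\f = \rho(l_1,l_2,d)$ as in Section \ref{5.4}, I would make the resolution $T$-equivariant with the diagonal matrices $u(t), v(t)$ of Section \ref{5.4}, twist by $\O_{\PP^2}(-1)$ to obtain $0 \to 2\O(-4) \to \O(-3)\oplus\O \to \F(-1) \to 0$, and compute $\H^0(\F(-1)) = \H^0(\O)$ and $\H^1(\F(-1)) = \ker\big( \H^2(2\O(-4)) \to \H^2(\O(-3)) \big)$. The trivial summand of $\O(-3)\oplus\O$ carries the character $-v_2 = l_1 + l_2 - d$, so $\H^0(\F(-1))^*$ has weight $v_2 = d - l_1 - l_2$; for the $\H^2$'s one uses $\H^2(\O_{\PP^2}(-n)) \isom \H^0(\O_{\PP^2}(n-3))^*$ together with $\omega_{\PP^2} \isom \O_{\PP^2}(-3) \tensor [\,x+y+z\,]$, which makes $\H^2(\O_{\PP^2}(-3))$ have weight $-x-y-z$ and $\H^2(\O_{\PP^2}(-4))$ have weights $-2x-y-z$, $-x-2y-z$, $-x-y-2z$. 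Inserting $u_1 = l_1$, $u_2 = l_2$, $v_1 = 0$, deleting the one weight of $\H^2(2\O(-4))$ in the image of $\H^2(\f(-1))$, and adding $v_2$ to the remaining five yields the list in the statement. The homological steps are routine; \emph{the main obstacle is this last computation} --- getting the $T$-equivariant Serre duality right (equivalently, the character by which $\omega_{\PP^2}$ differs from $\O_{\PP^2}(-3)$ for the inverse-exponent action on $\PP^2$ used here) and correctly singling out which of the six weights of $\H^2(2\O(-4))$ is killed by $\H^2(\f(-1))$. As with the tables of Section \ref{section_3}, this bookkeeping is best carried out with the {\sc Singular} procedures of Appendix \ref{appendix_A}.
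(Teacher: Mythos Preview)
Your argument is correct, and it follows a genuinely different route from the paper's. The paper passes to the dual sheaf $\G = \F^\D(1)$, applies $\Ext(-,\G)$ to the evaluation map $\H^0(\G)\otimes\O \to \G$ to obtain a surjection $\epsilon\colon \Ext^1(\G,\G) \to \H^0(\G)^*\otimes\H^1(\G)$ whose kernel is $\T_{[\G]}\M_3^\D$, and only at the end invokes Serre duality to rewrite the answer in terms of $\F$. The weight computation is then done on the $\G$ side, reading off a basis of $\H^0(\G) = (V^*\oplus V^*)/\CC(l_1,l_2)$ directly from the dualised resolution $0 \to \O(-3)\oplus\O \to 2\O(1) \to \G \to 0$.

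Your approach stays entirely on the $\F$ side: you apply $\Hom(-,\F)$ to the given resolution, use the vanishing of $\H^1(\F(2))$ and $\H^1(\F(3))$ to collapse the long exact sequence, and identify $\operatorname{coker}(\delta)$ with $\T_{[\F]}\M_3$ via the standard comparison between deformations of $\f$ and of its cokernel (this step is indeed the one that needs care, but it goes through cleanly here because $\Ext^1(E_1,E_1) = \Ext^1(E_0,E_1) = 0$). Your weight computation via $\H^1(\F(-1)) = \ker\big(\H^2(2\O(-4)) \to \H^2(\O(-3))\big)$ and equivariant Serre duality on $\PP^2$ is a nice alternative to the paper's basis-by-hand calculation for $\H^0(\G)$. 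The trade-off: the paper's route is shorter because it recycles \cite[Proposition 6.2]{choi_maican} verbatim, whereas yours is more self-contained and avoids the passage to the dual moduli space, at the cost of having to track the $T$-equivariant structure on $\omega_{\PP^2}$ (the point you yourself flag as the main obstacle).
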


\begin{proof}
Denote $\G = \F^\D(1)$.
According to \cite[Lemma 3]{rendiconti}, dualising the resolution for $\F$ yields the resolution
\[
0 \lra \O(-3) \oplus \O \stackrel{\psi}{\lra} 2\O(1) \lra \G \lra 0,
\]
\[
\psi = \left[
\ba{cc}
c_1 d/l_2 & l_1 \\
c_2 d/l_1 & l_2
\ea
\right].
\]
This allows us to use the argument at  \cite[Proposition 6.2]{choi_maican}.
Applying the $\Ext(\_, \G)$ functor to the canonical morphism $\H^0(\G) \tensor \O \to \G$
yields the linear map
\[
\epsilon \colon \Ext^1(\G, \G) \lra \H^0(\G)^* \tensor \H^1(\G).
\]
Its kernel is a subspace of $\T_{[\G]} \M_3^\D \subset \T_{[\G]} \M_{\PP^2} (5,4)$.
Recall that $\T_{[\G]} \M_3^\D$ has codimension $5$ in $\Ext^1(\G, \G)$.
Since $\dim (\H^0(\G)^* \tensor \H^1(\G)) = 5$, we deduce that $\epsilon$ is surjective
and that $\operatorname{Ker}(\epsilon) = \T_{[\G]} \M_3^\D$.
Thus, we obtain a canonical isomorphism
\[
N_{[\F^\D(1)]} \isom \H^0(\F^\D(1))^* \tensor \H^1(\F^\D(1)).
\]
Using Serre Duality, as at \cite[Proposition 3.3.1]{homology}, yields the canonical isomorphism from the proposition.
We have identifications
\[
\H^0(\G) = \H^0(2\O(1))/\H^0(\O) = (V^* \oplus V^*) / \CC(l_1, l_2).
\]
The vectors $(X, 0)$, $(Y, 0)$, $(Z, 0)$, $(0, X)$, $(0, Y)$, $(0, Z)$ form a basis of $V^* \oplus V^*$.
The calculations at \cite[Proposition 6.2]{choi_maican} show that these are eigenvectors for the action of
$(\CC^*)^3$, corresponding to the weights
\[
- u_1 + x, \quad - u_1 + y, \quad - u_1 + z, \quad - u_2 + x, \quad - u_2 + y, \quad - u_2 + z.
\]
The vector $(l_1, l_2)$ is acted on trivially. Moreover, $(\CC^*)^3$ acts on $\H^1(\G)$ with weight
$v_2 - x - y - z$. It follows that the list of weights for the action of $T$ on $N_{[\G]}$ is obtained by subtracting
the weight
\[
v_2 - x - y - z = d - 2 l_1 - 2 l_2 - l_3
\]
from the list
\begin{align*}
u_1 & + v_2 - 2 x - \phantom{2} y - \phantom{2} z, \\
u_1 & + v_2 - \phantom{2} x - 2y - \phantom{2} z, \\
u_1 & + v_2 - \phantom{2} x - \phantom{2} y - 2z, \\
u_2 & + v_2 - 2 x - \phantom{2} y - \phantom{2} z, \\
u_2 & + v_2 - \phantom{2} x - 2y - \phantom{2} z, \\
u_2 & + v_2 - \phantom{2} x - \phantom{2} y - 2z,
\end{align*}
which is the same as the list
\begin{align*}
& d - 2 l_1 - 2 l_2 - \phantom{2} l_3, \\
& d - \phantom{2} l_1 - 3 l_2 - \phantom{2} l_3, \\
& d - \phantom{2} l_1 - 2 l_2 - 2 l_3, \\
& d - 3 l_1 - \phantom{2} l_2 - \phantom{2} l_3, \\
& d - 2 l_1 - 2 l_2 - \phantom{2} l_3, \\
& d - 2 l_1 - \phantom{2} l_2 - 2 l_3.
\end{align*}
In view of the fact that $N_{[\G]}$ and $N_{[\F]}$ are isomorphic as $T$-modules, this proves the proposition.
\end{proof}

\noindent
The weights for the action of $T$ on $N_{[\F]}$ are distinct, so at most one of them can be $\chi_0$.
This shows that no point of $\M_3^T$ lies in a two-dimensional component of $\M_{\PP^2}(5, 1)^T$.
The points of $\M_3^T$ lying on projective lines inside $\M_{\PP^2}(5, 1)^T$ are listed in the third column of Table 7 below.

\begin{table}[!hpt]{Table 7. Fixed points $\rho(l_1, l_2, d)$ in $\M_3$.}
\begin{center}
\begin{tabular}{| l | l | l |}
\hline
$(l_1, l_2)$ & $d$ & Limit points of lines
\\
\hline \hline
$(X, Y)$ & $\Sym^5 \setminus \{ Z^5 \}$ & $\vphantom{\overline{X^X}}XY^3 Z$, $X^2 Y^2 Z$, $X^3 YZ$, $XY^2 Z^2$, $X^2 YZ^2$
\\
\hline
$(X, Z)$ & $\Sym^5 \setminus \{ Y^5 \}$ & $\vphantom{\overline{X^X}}XYZ^3$, $X^2 YZ^2$, $X^3 YZ$, $XY^2 Z^2$, $X^2 Y^2 Z$
\\
\hline
$(Y, Z)$ & $\Sym^5 \setminus \{ X^5 \}$ & $\vphantom{\overline{X^X}}XYZ^3$, $XY^2 Z^2$, $XY^3 Z$, $X^2 YZ^2$, $X^2Y^2 Z$
\\
\hline
\end{tabular}
\end{center}
\end{table}


\subsection{Proof of Theorem 2}
\label{5.5}
The following lemma is probably well-known, but we need it in order to determine the structure of the irreducible components
of dimension $2$ of the torus fixed locus.

\begin{lemma}
\label{5.5.1}
Let $\E$ be a vector bundle of rank $2$ on $\PP^n$. Let $s$ be a section of $\PP(\E)$.
Assume that $\PP(\E) \setminus \{ s \}$ is the trivial bundle on $\PP^n$ with fibre $\AA^1$.
Then $\PP(\E)$ is the trivial bundle on $\PP^n$ with fibre $\PP^1$.
\end{lemma}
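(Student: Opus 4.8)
The plan is to use the triviality of $\PP(\E)\setminus\{s\}$ to exhibit several pairwise disjoint sections of $\PP(\E)$, and to deduce from their existence that $\E$ is, after twisting by a line bundle, a trivial rank-$2$ bundle.

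First I would recall the standard dictionary between sections of the $\PP^1$-bundle $\PP(\E)\to\PP^n$ (the bundle of lines in $\E$) and line subbundles of $\E$: a section $\sigma$ corresponds to a line subbundle $L_\sigma\subset\E$, and two sections $\sigma,\sigma'$ are disjoint exactly when $(L_\sigma)_x\cap(L_{\sigma'})_x=0$ in every fibre, equivalently when $L_\sigma\oplus L_{\sigma'}\to\E$ is an isomorphism. The given section $s$ corresponds to a line subbundle $A\subset\E$. Since $\PP(\E)\setminus\{s\}\isom\PP^n\times\AA^1$ over $\PP^n$, the constant sections $x\mapsto(x,0)$ and $x\mapsto(x,1)$ of $\PP^n\times\AA^1$ transport to morphisms $\PP^n\to\PP(\E)$ that are again sections of $\PP(\E)\to\PP^n$, say $s_1$ and $s_2$, whose images lie inside $\PP(\E)\setminus\{s\}$; these images are automatically closed because $\PP^n$ is proper. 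By construction $s,s_1,s_2$ are pairwise disjoint sections of $\PP(\E)$.

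Let $B$ and $C$ be the line subbundles of $\E$ corresponding to $s_1$ and $s_2$. From the disjointness of $s$ and $s_1$ we obtain $\E\isom A\oplus B$. Now compose the inclusion $C\hookrightarrow\E=A\oplus B$ with the projection onto $A$. In each fibre $\E_x$ the lines $C_x$ and $B_x$ are distinct (because $s_2$ is disjoint from $s_1$), so $C_x\cap B_x=0$ and $C_x\to A_x$ is injective, hence bijective for dimension reasons. A morphism of vector bundles that is an isomorphism on every fibre is an isomorphism, so $C\isom A$; running the same argument with the projection onto $B$, and using now that $s_2$ is disjoint from $s$, gives $C\isom B$. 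Hence $\E\isom A\oplus A\isom A\tensor\O^{\oplus2}$, and therefore $\PP(\E)=\PP(\E\tensor A^{-1})\isom\PP(\O^{\oplus2})=\PP^n\times\PP^1$.

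The computational content here is essentially nil, and the only background facts used are the section/subbundle correspondence with its fibrewise description of disjointness, and the elementary statement that a fibrewise isomorphism of vector bundles is an isomorphism; both are routine and no fact about $\operatorname{Pic}(\PP^n)$ or cohomology is needed. The one step that requires an idea rather than bookkeeping — and hence the main obstacle — is the observation in the second paragraph that the trivial $\AA^1$-bundle $\PP(\E)\setminus\{s\}$ supplies not just one extra section but a whole family of pairwise disjoint ones, which is exactly the input that forces $\E$ to be balanced; everything after that follows mechanically.
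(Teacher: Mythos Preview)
Your proof is correct, and it takes a somewhat different route from the paper's. The paper first twists $\E$ so that the section $s$ lifts to a nowhere-vanishing global section $\tilde s\colon\O\hookrightarrow\E$, sets $\L=\E/\O$, identifies $\PP(\E)\setminus\{s\}$ with the total space of $\L^*$, and reads off from the triviality hypothesis that $\L^*$ is trivial; then $\E\isom\O\oplus\L$ is trivial. Your argument bypasses the identification of the complement with a line-bundle total space entirely: you pull two constant sections out of the trivial $\AA^1$-bundle to get three pairwise disjoint sections of $\PP(\E)$, and the resulting splitting $\E=A\oplus B$ together with the third subbundle $C$ projecting isomorphically onto each summand forces $A\isom B$, hence $\E\isom A^{\oplus 2}$. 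Your approach is a bit more elementary---it uses neither the affine-bundle description of the complement nor any vanishing of $\H^1$, and in fact works verbatim over an arbitrary base, not just $\PP^n$---while the paper's approach has the mild conceptual advantage of explaining structurally \emph{what} the complement of a section in a $\PP^1$-bundle is. Both arguments ultimately rest on the same mechanism, namely that a trivial $\AA^1$-bundle supplies extra disjoint sections; you just exploit one more of them and thereby sidestep the intermediate identification.
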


\begin{proof}
Tensoring, possibly, $\E$ with a line bundle, we may assume that $s$ lifts to a global section $\tilde{s}$ of $\E$.
The map $\O \to \E$ of multiplication with $\tilde{s}$ is injective and its cokernel is a line bundle $\L$.
The total space of $\L^*$ is isomorphic to $\PP(\E) \setminus \{ s \}$, hence $\L^*$ is trivial.
Clearly, $\E \isom \O \oplus \L$, hence $\E$ is trivial, hence $\PP(\E)$ is trivial.
\end{proof}

\begin{proposition}
\label{5.5.2}
Each irreducible component of dimension $2$ of $\M_{\PP^2}(5, 1)^T$ is isomorphic to $\PP^1 \times \PP^1$.
\end{proposition}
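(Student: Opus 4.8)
The plan is to show that each such component $Z$ is a $\PP^1$-bundle over $\PP^1$ and then to invoke Lemma \ref{5.5.1}. First I would reduce to a single component. By Propositions \ref{4.2.1} and \ref{4.3.1} and the discussion in Section \ref{4.4}, the fixed loci $\M_1^T$, $\M_2^T$ and $\M_3^T$ have dimension at most $1$; hence any two-dimensional irreducible component $Z$ of $\M_{\PP^2}(5,1)^T$ meets the open stratum $\M_0$ in a dense open subset, and hence, by Section \ref{4.1}, $Z$ is the closure in $\M_{\PP^2}(5,1)$ of one of the six surfaces $\Sigma_{\sigma}$, $\sigma \in \Sym_3$, obtained from $\Sigma_0$ by permuting the variables. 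Since these permutations act by automorphisms on $\M_{\PP^2}(5,1)$, it suffices to prove that $Z = \overline{\Sigma_0}$ is isomorphic to $\PP^1 \times \PP^1$. As $\M_{\PP^2}(5,1)$ is smooth and projective, its torus fixed locus is smooth by \cite[Theorem 4.2]{carrell}, so $Z$ is a connected component of it, that is, a smooth projective surface; and by the remark following Proposition \ref{5.4.1} it is disjoint from $\M_3$, so $Z \subset \M_0 \cup \M_1 \cup \M_2$.

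Next I would build the bundle structure. The parametrisation in Section \ref{4.1} identifies $\Sigma_0$ with $\PP^1 \times \AA^1$ via $((a:b),c) \mapsto \lambda((a:b),c)$, the first projection being the restriction to $\Sigma_0$ of the morphism $\M_0 \to \N(3,4,3)$, with image the $T$-fixed line $\Lambda$ of Section \ref{2.2.7}. The aim is to extend this projection to a morphism $\pi \colon Z \to \PP^1$ realising $Z$ as a $\PP^1$-bundle, that is, $Z \isom \PP(\E)$ for some rank-$2$ vector bundle $\E$ on $\PP^1$, in such a way that the curve $Z \setminus \Sigma_0$ becomes a section. For this I would study the boundary $Z \setminus \Sigma_0 \subset \M_1 \cup \M_2$ using the ``limit points of surfaces'' columns of Tables 5 and 6: for $\Sigma_0$ these single out one affine line inside $\M_1$ together with finitely many isolated points in $\M_2$. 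Examining the limits of the matrices $\lambda((a:b),c)$ as $c \to \infty$ should show that, for each $(a:b) \in \PP^1$, the closure in $Z$ of the affine line $\{\lambda((a:b),c) \mid c \in \AA^1\}$ is a smooth rational curve, that these curves are pairwise disjoint, that each meets $Z \setminus \Sigma_0$ in exactly one point, and that together they cover $Z$; this produces $\pi$ and exhibits $Z \setminus \Sigma_0$ as a section of it.

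With this in hand the conclusion is immediate. The parametrisation presents $\Sigma_0 = Z \setminus (Z \setminus \Sigma_0)$ as the trivial $\AA^1$-bundle $\PP^1 \times \AA^1$ over $\PP^1$, namely the complement of the section $Z \setminus \Sigma_0$ in the $\PP^1$-bundle $Z = \PP(\E)$; so Lemma \ref{5.5.1}, applied with $n = 1$, gives $Z \isom \PP^1 \times \PP^1$.

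I expect the genuinely hard step to be the middle one: showing that the projection extends to an everywhere-defined morphism and that $Z$ is a $\PP^1$-bundle, rather than a more complicated rational surface. A priori a rational map from a smooth projective surface to $\PP^1$ only becomes a morphism after blowing up its base points, and ruling this out requires the explicit boundary analysis above --- identifying which degenerate sheaves occur as $c \to \infty$, in which stratum, and checking that the fibres of $\pi$ stay irreducible and reduced. Once the boundary is under control, the $\PP^1$-bundle structure and Lemma \ref{5.5.1} finish the argument routinely.
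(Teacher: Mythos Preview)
Your proposal is correct and follows essentially the same approach as the paper: reduce to $\Sigma = \overline{\Sigma_0}$, read off from Tables~5--7 that the boundary $\Sigma \setminus \Sigma_0$ is the affine line $\omicron(X^2, XY, X^2Y^2Z)$ in $\M_1$ together with the single point $\pi(X, Y, Y, XZ, X^2Y^2Z)$ in $\M_2$, recognise this as a section of a map $\Sigma \to \Lambda$ exhibiting $\Sigma$ as $\PP(\E)$, and apply Lemma~\ref{5.5.1}. The paper dispatches your ``hard middle step'' in one sentence, simply asserting that $s = \Sigma \setminus \Sigma_0$ is a section once the boundary has been identified from the tables; your more explicit acknowledgement of what needs checking there is appropriate.
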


\begin{proof}
Recall the surface $\Sigma_0$ from Section \ref{4.1} and the line $\Lambda$ from Section \ref{2.2.7}.
Denote $\Sigma = \overline{\Sigma}_0$.
An examination of Tables 5, 6 and 7 convinces us that
\[
\bigcup_{\sigma \in \Sym_3} \Sigma_{\sigma} \cap ( \M_1 \cup \M_2 \cup \M_3 )
= \bigcup_{\sigma \in \Sym_3} (\omicron(X^2, XY, X^2 Y^2 Z)_{\sigma} \cup \pi(X, Y, Y, XZ, X^2 Y^2 Z)_{\sigma}).
\]
It follows that $s = \Sigma \setminus \Sigma_0$ is a section for the map $\Sigma \to \Lambda$.
Thus, $\Sigma = \PP(\E)$ for a vector bundle $\E$ of rank $2$ over the projective line $\Lambda$.
The proposition follows from Lemma \ref{5.5.1}, in view of the fact that $\Sigma_0$ is isomorphic to the trivial bundle over $\Lambda$
with fibre $\AA^1$.
\end{proof}

\noindent
Denote $\M = \M_{\PP^2}(5, 1)$.
The first part of Theorem \ref{theorem_2} concerning the structure of $\M^T$ follows from Section \ref{section_4}
and Proposition \ref{5.5.2}.

Let $\lambda(\tau) = (\tau^{n_0}, \tau^{n_1}, \tau^{n_2})$ denote a one-parameter subgroup of $T$
that is not orthogonal to any non-zero character $\chi$ for which there is $[\f ] \in \M^T$ such that the eigenspace
\[
(\T_{[\f]} \M)_{\chi} = \{ w \in \T_{[\f]} \M \mid \ t w = \chi(t) w \text{ for all } t \in T \}
\]
is non-zero. 
From the results in this section it follows that the set of such characters $\chi$ is contained in the set
\[
\{ ix+ jy + kz \mid -7 \le i, j, k \le 7 \}.
\]
Thus, we can choose $\lambda(\tau) = (1, \tau, \tau^8)$.
Recall from Section \ref{3.3} that for a point $[\f] \in \M^T$ the integer $p [\f]$ is the sum of the dimensions
of the spaces $(\T_{[\f]} \M)_{\chi}$ for which $\langle \lambda, \chi \rangle > 0$.
If $[\f]$ varies in an irreducible component $X$ of $\M^T$, then $p [\f]$ does not change, so we may define the integer $p(X)$.
These integers can be computed with the help of the {\sc Singular} \cite{singular} program from Appendix \ref{appendix_B}.
From (\ref{3.3.3}) we deduce the formula
\[
P_{\M}(x) = \sum_{\dim(X) = 0} x^{2 p(X)} + \sum_{\dim(X) = 1} (x^2 + 1) x^{2 p(X)} + \sum_{\dim(X) = 2} (x^4 + 2 x^2 + 1) x^{2 p(X)},
\]
where the summation is taken over all connected components $X$ of $\M^T$.
Substituting the values for $p(X)$ yields the expression of $P_{\M}$ from Theorem \ref{theorem_2}.
The final statement about the Hodge numbers follows, as in the case of $\N(3, 4, 3)$, from (\ref{3.3.4}).


\appendix

\section{Singular programs I}
\label{appendix_A}

{\small
\begin{verbatim}
ring r=0,(x,y,z),dp;
proc weight-decomposition(list u,list v)
{list the_variables=(x,y,z); list w=list(); list g=list();
 int i,j,k,e;
 for (i=1; i<=size(u); i=i+1) {for (j=1; j<=size(v); j=j+1)
 {for (k=1; k<=3; k=k+1){w=w+list(-v[j]-u[i]+the_variables[k]);};};};
 for (i=1; i<=size(u); i=i+1) {for (j=1; j<=size(u); j=j+1)
                                           {g=g+list(u[i]-u[j]);};};
 for (i=1; i<=size(v); i=i+1) {for (j=1; j<=size(v); j=j+1)
                                           {g=g+list(v[i]-v[j]);};};
 g=delete(g,size(g));
 for (i=1; i<=size(g); i=i+1) {e=1; for (j=1; j<=size(w); j=j+1)
  {if (w[j]==g[i] and e==1) {w=delete(w,j); e=0;};};};
 return(w);};

proc positive_part(list v)
{int i; int p; p=0; for(i=1;i<=12;i=i+1) {if(v[i]>0) {p=p+1;};};
 return(p);};

proc the_values(list w, list l)
{int i; list v; v=list(); for(i=1; i<=12; i=i+1)
 {v=v+list((w[i]/x)*l[1]+(w[i]/y)*l[2]+(w[i]/z)*l[3]);};
 return(v);};

proc positive-parts(list w)
{list d; d=list(); int i; list omega;
 omega=list(list(0,1,4),list(1,4,0),list(4,0,1), list(1,0,4),
  list(4,1,0), list(0,4,1)); for(i=1; i<=6; i=i+1)
 {d=d+list(positive_part(the_values(w,omega[i])))};
 return(d);};
\end{verbatim}
}


\section{Singular programs II}
\label{appendix_B}

{\small
\begin{verbatim}
ring r=0,(x,y,z),dp;

int i,j; poly P, q, q1, q2, l1, l2, l; P=0;
int points, lines; points = 0; lines = 0;

list s1, s2, s3, s4, s5, d;

s1=list(x,y,z);
s2=list(2x, 2y, 2z, x+y, x+z, y+z);
s3=list(3x, 3y, 3z, 2x+y, 2x+z, x+2y, 2y+z, x+2z, y+2z, x+y+z);
s4=list(4x, 4y, 4z, 3x+y, 2x+2y, x+3y, 3x+z, 2x+2z, x+3z,
3y+z, 2y+2z, y+3z, 2x+y+z, x+2y+z, x+y+2z);
s5=list(5x,5y,5z,4x+y,3x+2y,2x+3y,x+4y,4x+z,3x+2z,2x+3z,x+4z,
4y+z,3y+2z,2y+3z,y+4z,3x+y+z,x+3y+z,x+y+3z,
2x+2y+z,2x+y+2z,x+2y+2z);

proc add(poly p, list l)
{int i; list ll; ll=list();
for(i=1; i<=size(l); i=i+1){ll=ll+list(p+l[i]);};
return(ll);};

proc positive_part(list l)
{int i; int p; p=0; for (i=1; i<=size(l); i=i+1) {if (l[i]>0) {p=p+1;};};
return(p);};

proc values(list w, list l)
{int i; list v; v=list(); for (i=1; i<=size(w); i=i+1)
{v=v+list((w[i]/x)*l[1]+(w[i]/y)*l[2]+(w[i]/z)*l[3]);};
return(v);};

proc sub(list l, list ll)
{list lll; int i,j,e; lll=l; for (j=1; j<=size(ll); j=j+1)
{e=1; for(i=1; i<=size(lll); i=i+1)
{if (lll[i]==ll[j] and e==1) {lll=delete(lll,i); e=0;};};};
return(lll);};

proc id0(poly a, poly b)
{return(sub(s5, (sub(s5,
         add(a+x, s3)+add(a+y, s3)+add(b+x, s2)+add(b+y, s2)))));};

proc id2(list l)
{list ll; ll = list(); int i; for (i=1; i<=size(l); i=i+1)
{ll = ll+ add(l[i], s3);};
return(sub(s5, (sub(s5, ll))));};

proc id3(list l)
{list ll; ll = list(); int i; for (i=1; i<=size(l); i=i+1)
{ll = ll+ add(l[i], s2);};
return(sub(s5, (sub(s5, ll))));};

proc point_2(list l)
{points=points+2;
return(x^(2*positive_part(values(l, list(0,1,8))))
+x^(2*positive_part(values(l, list(1,0, 8)))));};

proc point_3(list l)
{points=points+3;
return(x^(2*positive_part(values(l, list(0,1,8))))
+x^(2*positive_part(values(l, list(8,1,0))))
+x^(2*positive_part(values(l, list(0,8,1)))));};

proc point_3_1(list l)
{points=points+3;
return(x^(2*positive_part(values(l, list(0,1,8))))
+x^(2*positive_part(values(l, list(1,0,8))))
+x^(2*positive_part(values(l, list(8,1,0)))));};

proc point_3_2(list l)
{points=points+3;
return(x^(2*positive_part(values(l, list(0,1,8))))
+x^(2*positive_part(values(l, list(1,8,0))))
+x^(2*positive_part(values(l, list(8,0,1)))));};

proc point_6(list l)
{points=points+6;
return(x^(2*positive_part(values(l, list(0,1,8))))
+x^(2*positive_part(values(l, list(1,0,8))))
+x^(2*positive_part(values(l, list(8,1,0))))
+x^(2*positive_part(values(l, list(1,8,0))))
+x^(2*positive_part(values(l, list(0,8,1))))
+x^(2*positive_part(values(l, list(8,0,1)))));};

proc line_3(list l)
{lines=lines+3;
return((1+x^2)*x^(2*positive_part(values(l, list(0,1,8))))
+(1+x^2)*x^(2*positive_part(values(l, list(8,1,0))))
+(1+x^2)*x^(2*positive_part(values(l, list(0,8,1)))));};

proc line_3_1(list l)
{lines=lines+3;
return((1+x^2)*x^(2*positive_part(values(l, list(0,1,8))))
+(1+x^2)*x^(2*positive_part(values(l, list(1,0,8))))
+(1+x^2)*x^(2*positive_part(values(l, list(8,1,0)))));};

proc line_3_2(list l)
{lines=lines+3;
return((1+x^2)*x^(2*positive_part(values(l, list(0,1,8))))
+(1+x^2)*x^(2*positive_part(values(l, list(1,8,0))))
+(1+x^2)*x^(2*positive_part(values(l, list(8,0,1)))));};

proc line_6(list l)
{lines=lines+6;
return((1+x^2)*x^(2*positive_part(values(l, list(0,1,8))))
+(1+x^2)*x^(2*positive_part(values(l, list(1,0,8))))
+(1+x^2)*x^(2*positive_part(values(l, list(8,1,0))))
+(1+x^2)*x^(2*positive_part(values(l, list(1,8,0))))
+(1+x^2)*x^(2*positive_part(values(l, list(0,8,1))))
+(1+x^2)*x^(2*positive_part(values(l, list(8,0,1)))));};

proc surface_3(list l)
{return((1+2*(x^2)+x^4)*x^(2*positive_part(values(l, list(0,1,8))))
+(1+2*(x^2)+x^4)*x^(2*positive_part(values(l, list(1,8,0))))
+(1+2*(x^2)+x^4)*x^(2*positive_part(values(l, list(8,0,1)))));};

proc w0(list u, list v)
{list ll; ll=list(); int i;
for(i=1; i<=4; i=i+1)
{ll=ll+add(-v[1]-u[i], s1)+add(-v[2]-u[i], s1)+add(-v[3]-u[i], s1)
+add(-v[4]-u[i], s2);};
return(ll);};

proc g0(list u, list v)
{list ll; ll= list(); int i,j;
for(i=1; i<=4; i=i+1){for(j=1; j<=4; j=j+1)
{ll=ll+list(u[i]-u[j]);};};
for(i=1; i<=3; i=i+1){for(j=1; j<=3; j=j+1)
{ll=ll+list(-v[i]+v[j]);};};
ll=ll+add(-v[4]+v[1], s1)+add(-v[4]+v[2], s1)+add(-v[4]+v[3], s1);
return(ll);};

proc m0(list u, list v)
{return(sub(w0(u,v), g0(u,v)));};

list alpha;
d=id3(list(x+y+z, y+2z, 2x+z, x+2y));
for(i=1; i<=size(d); i=i+1)
{alpha = m0(list(0, x-z, y-x, z-y), list(z, x, y, d[i]-x-y-z));
P = P + point_2(alpha);};

list beta;
d=id3(list(x+y+z, 2y+z, 2x+z, 2x+y));
for(i=1; i<=size(d); i=i+1)
{beta = m0(list(0, x-y, y-x, z-x), list(y, x, x, d[i]-x-y-z));
P = P + point_6(beta);};

list gamma;
d=id3(list(2y+z, x+y+z, 2x+z, 3y));
for(i=1; i<=size(d); i=i+1)
{gamma = m0(list(0, x-y, y-x, x-2y+z), list(y, x, 2y-x, d[i]-x-y-z));
P = P + point_6(gamma);};

list delta;
d=id3(list(2y+z, 2x+z, x+2y, 2x+y));
for(i=1; i<=size(d); i=i+1)
{delta = m0(list(0, z-y, x-2y+z, 2x-2y), list(y, 2y-z, 2y-x, d[i]-2x-z));
P = P + point_3(delta);};

list epsilon;
d=id3(list(2y+z, x+y+z, 2x+y, 3x));
for(i=1; i<=size(d); i=i+1)
{epsilon = m0(list(0, y-x, x-z, 2x-y-z), list(x, z, -x+y+z, d[i]-2x-y));
P = P + point_6(epsilon);};

list zeta;
d=id3(list(2y+z, x+2y, 2x+y, 3x));
for(i=1; i<=size(d); i=i+1)
{zeta = m0(list(0, x-y, y-x, 2x-y-z), list(y, x, -x+y+z, d[i]-2x-y));
P = P + point_6(zeta);};

list eta;
d=id3(list(3x, 2x+y, x+2y, 3y));
for(i=1; i<=size(d); i=i+1)
{eta = m0(list(0, x-y, y-x, 2y-2x), list(y,x,2x-y, d[i]-x-2y));
P = P + point_3(eta);};

list theta;
d=sub(id3(list(x+y+z, 2x+z, 2x+y, x+2z)), list(x+y+3z, x+2y+2z));
for(i=1; i<=size(d); i=i+1)
{theta = m0(list(0, y-x, z-x, y-z), list(x, x, x-y+z, d[i]-x-y-z));
P = P + point_6(theta);};
d=list(x+y+3z, x+2y+2z);
for(i=1; i<=size(d); i=i+1)
{theta = m0(list(0, y-x, z-x, y-z), list(x, x, x-y+z, d[i]-x-y-z));
P = P + line_6(theta);};

list iota;
d=sub(id3(list(x+2z, x+y+z, 2x+y, x+2y)), list(x+3y+z, x+2y+2z));
for(i=1; i<=size(d); i=i+1)
{iota = m0(list(0, z-y, -x-y+2z, -2y+2z), list(y, x+y-z, x+2y-2z, d[i]-x-2z));
P = P + point_6(iota);};
d=list(x+3y+z, x+2y+2z);
for(i=1; i<=size(d); i=i+1)
{iota = m0(list(0, z-y, -x-y+2z, -2y+2z), list(y, x+y-z, x+2y-2z, d[i]-x-2z));
P = P + line_6(iota);};

list kappa;
d=sub(id3(list(x+2y, 2x+y, 2x+z, x+2z)), list(x+2y+2z));
for(i=1; i<=size(d); i=i+1)
{kappa = m0(list(0, y-x, -x+2y-z, 2y-2z), list(x, x-y+z, x-2y+2z, d[i]-x-2y));
P = P + point_3_2(kappa);};
d=list(x+2y+2z);
for(i=1; i<=size(d); i=i+1)
{kappa = m0(list(0, y-x, -x+2y-z, 2y-2z), list(x, x-y+z, x-2y+2z, d[i]-x-2y));
P = P + line_3_2(kappa);};

list lambda;
d=sub(id3(list(3x, 2x+y, 2x+z, x+y+z)), list(2x+2y+z, 2x+y+2z));
for(i=1; i<=size(d); i=i+1)
{lambda = m0(list(0, x-y, 2x-y-z, x-z), list(y, -x+y+z, z, d[i]-3x));
P = P + line_3_2(lambda);};
d=list(2x+2y+z, 2x+y+2z);
for(i=1; i<=size(d); i=i+1)
{lambda = m0(list(0, x-y, 2x-y-z, x-z), list(y, -x+y+z, z, d[i]-3x));
P = P + surface_3(lambda);};

list mu;
d=sub(id3(list(3x, 2x+y, x+y+z, x+2y)), list(x+3y+z, x+2y+2z));
for(i=1; i<=size(d); i=i+1)
{mu = m0(list(y-x, 0, x-z, x-y), list(x, z, y, d[i]-2x-y));
P = P + point_6(mu);};
d=list(x+3y+z, x+2y+2z);
for(i=1; i<=size(d); i=i+1)
{mu = m0(list(y-x, 0, x-z, x-y), list(x, z, y, d[i]-2x-y));
P = P + line_6(mu);};

list nu;
d=sub(id3(list(x+2y, 2x+y, 3x, 2x+z)), list(2x+2y+z, 2x+y+2z));
for(i=1; i<=size(d); i=i+1)
{nu = m0(list(0, y-x, 2y-2x, -x+2y-z), list(x, 2x-y, 2x-2y+z, d[i]-x-2y));
P = P + point_6(nu);};
d=list(2x+2y+z, 2x+y+2z);
for(i=1; i<=size(d); i=i+1)
{nu = m0(list(0, y-x, 2y-2x, -x+2y-z), list(x, 2x-y, 2x-2y+z, d[i]-x-2y));
P = P + line_6(nu);};

list xi;
d=sub(id3(list(2x+y, x+2y, x+y+z)),
                   list(3x+y+z, x+3y+z, x+y+3z, x+2y+2z, 2x+y+2z));
for(i=1; i<=size(d); i=i+1)
{xi = m0(list(0, x-y, x-z, x-z), list(y, z, -x+y+z, d[i]-2x-y));
P = P + point_3_2(xi);};
d=list(3x+y+z, x+3y+z, x+y+3z, x+2y+2z, 2x+y+2z);
for(i=1; i<=size(d); i=i+1)
{xi = m0(list(0, x-y, x-z, x-z), list(y, z, -x+y+z, d[i]-2x-y));
P = P + line_3_2(xi);};

proc w1(list u, list v)
{return(add(-v[1]-u[1], s3)+add(-v[1]-u[2], s2)+add(-v[2]-u[1], s3)+
             add(-v[2]-u[2], s2));};

proc g1(list u, list v)
{return(list(0,0,0,v[1]-v[2],v[2]-v[1]) + add(u[2]-u[1], s1));};

proc m1(list u, list v)
{return(sub(w1(u, v), g1(u, v)) + list(u[1]+v[1]-x-y-z, u[1]+v[2]-x-y-z));};

list omicron;
q1=2x; q2=2y;
d=sub(id2(list(q1,q2)), list(3x+y+z, x+3y+z));
for(i=1; i<=size(d); i=i+1)
{omicron=m1(list(d[i]-q1-q2, 0), list(q1, q2));
P = P + point_3(omicron);};

q1=2x; q2=y+z;
d=sub(id2(list(q1,q2)), list(3x+y+z, x+2y+2z));
for(i=1; i<=size(d); i=i+1)
{omicron=m1(list(d[i]-q1-q2, 0), list(q1, q2));
P = P + point_3_1(omicron);};

q1=2x; q2=x+y;
d=sub(id2(list(q1,q2)), list(2x+3y,2x+y+2z,2x+2y+z,3x+y+z));
for(i=1; i<=size(d); i=i+1)
{omicron=m1(list(d[i]-q1-q2, 0), list(q1, q2));
P = P + point_6(omicron);};
d=list(2x+3y,2x+y+2z);
for(i=1; i<=size(d); i=i+1)
{omicron=m1(list(d[i]-q1-q2, 0), list(q1, q2));
P = P + line_6(omicron);};

q1=x+z; q2=y+z;
d=sub(id2(list(q1,q2)), list(3x+y+z,x+3y+z,2x+2y+z,2x+y+2z,x+2y+2z));
for(i=1; i<=size(d); i=i+1)
{omicron=m1(list(d[i]-q1-q2, 0), list(q1, q2));
P = P + point_3(omicron);};
d=list(3x+y+z,x+3y+z,2x+2y+z);
for(i=1; i<=size(d); i=i+1)
{omicron=m1(list(d[i]-q1-q2, 0), list(q1, q2));
P = P + line_3(omicron);};

proc w2(list u, list v)
{return(add(-v[1]-u[1], s2)+add(-v[2]-u[1], s3)+add(-v[3]-u[1], s3)
 +add(-v[1]-u[2], s1)+add(-v[2]-u[2], s2)+add(-v[3]-u[2], s2)
 +add(-v[2]-u[3], s1)+add(-v[3]-u[3], s1));};

proc g2(list u, list v)
{return(sub(list(0, 0, 0, 0, 0, v[2]-v[3], v[3]-v[2])
 +add(u[2]-u[1], s1)+add(u[3]-u[1], s2)+add(u[3]-u[2], s1)
 +add(v[1]-v[2], s1)+add(v[1]-v[3], s1), list(v[1]-v[2]+x)));};

proc m2(list u, list v)
{return(sub(w2(u, v), g2(u, v))+list(u[1]+v[2]-x-y-z, 
                                  u[1]+v[3]-x-y-z, -v[1]-u[3]));};

list pi;
l=y; q=2z;
d=sub(id0(l, q), list(2x+2y+z, x+3y+z, x+2y+2z));
for(i=1; i<=size(d); i=i+1)
{pi=m2(list(d[i]-l-x-y, d[i]-q-x-y, 0), list(-d[i]+q+l+x+y, x, y));
P = P + point_6(pi);};

l=y; q=2x;
d=sub(id0(l, q), list(2x+y+2z, 2x+2y+z, x+3y+z, 3x+2y));
for(i=1; i<=size(d); i=i+1)
{pi=m2(list(d[i]-l-x-y, d[i]-q-x-y, 0), list(-d[i]+q+l+x+y, x, y));
P = P + point_6(pi);};
d=list(2x+y+2z);
for(i=1; i<=size(d); i=i+1)
{pi=m2(list(d[i]-l-x-y, d[i]-q-x-y, 0), list(-d[i]+q+l+x+y, x, y));
P = P + line_6(pi);};

l=y; q=x+z;
d=sub(id0(l, q), list(x+y+3z, 2x+2y+z, x+3y+z));
for(i=1; i<=size(d); i=i+1)
{pi=m2(list(d[i]-l-x-y, d[i]-q-x-y, 0), list(-d[i]+q+l+x+y, x, y));
P = P + point_6(pi);};
d=list(x+y+3z);
for(i=1; i<=size(d); i=i+1)
{pi=m2(list(d[i]-l-x-y, d[i]-q-x-y, 0), list(-d[i]+q+l+x+y, x, y));
P = P + line_6(pi);};

l=z; q=2x;
d=sub(id0(l, q), list(2x+y+2z, x+2y+2z, 3x+y+z));
for(i=1; i<=size(d); i=i+1)
{pi=m2(list(d[i]-l-x-y, d[i]-q-x-y, 0), list(-d[i]+q+l+x+y, x, y));
P = P + point_6(pi);};

l=z; q=x+y;
d=sub(id0(l, q), list(2x+y+2z, x+2y+2z, 2x+2y+z));
for(i=1; i<=size(d); i=i+1)
{pi=m2(list(d[i]-l-x-y, d[i]-q-x-y, 0), list(-d[i]+q+l+x+y, x, y));
P = P + point_3(pi);};

proc w3(list u, list v)
{return(add(-v[1]-u[1], s1)+add(-v[1]-u[2], s1)
 +add(-v[2]-u[1], s4)+add(-v[2]-u[2], s4));};

proc g3(list u, list v)
{return(list(0, 0, 0, u[1]-u[2], u[2]-u[1])+add(v[1]-v[2], s3));};

proc m3(list u, list v)
{return(sub(w3(u, v), g3(u, v))
 +sub(list(u[1]+v[2]-2x-y-z, u[1]+v[2]-x-2y-z, u[1]+v[2]-x-y-2z,
 u[2]+v[2]-2x-y-z, u[2]+v[2]-x-2y-z, u[2]+v[2]-x-y-2z), list(v[2]-x-y-z)));};

list rho;
d=sub(s5, list(5z, x+3y+z, 2x+2y+z, 3x+y+z, x+2y+2z, 2x+y+2z));
for(i=1; i<=size(d); i=i+1)
{rho=sub(w3(list(x, y), list(0, d[i]-x-y)), g3(list(x, y), list(0, d[i]-x-y)))
+list(d[i]-3x-y-z, d[i]-2x-2y-z, d[i]-x-3y-z, d[i]-2x-y-2z, d[i]-x-2y-2z);
P = P + point_3(rho);};
\end{verbatim}
}

\end{document}